\theoremstyle{plain}
\newtheorem{theorem} {Theorem}[section]
\newtheorem{lemma}[theorem] {Lemma}
\newtheorem{proposition}[theorem] {Proposition}
\newtheorem{corollary}[theorem] {Corollary}
\newtheorem{property}[theorem] {Property}
\theoremstyle{definition}
\newtheorem{definition}[theorem] {Definition}
\newtheorem{example} [theorem]{Example}
\theoremstyle{remark}
\newtheorem{remark}[theorem] {Remark}
\numberwithin{equation}{section}
\newcommand{\R}{{\mathbb R}}
\newcommand{\Z}{{\mathbb Z}}
\newcommand{\N}{{\mathbb N}}
\newcommand{\NN}{{\mathcal N}}
\newcommand{\QQ}{{\mathcal Q}}
\newcommand{\PP}{{\mathcal P}}
\newcommand{\LL}{{\mathcal L}}
\newcommand{\C}{{\mathcal C}}
\newcommand{\CC}{{\mathbb C}}
\newcommand{\TT}{{\mathcal T}}
\newcommand{\ES}{{\mathfrak S}}
\newcommand{\BB}{{\mathfrak B}}
\newcommand{\al}{{\alpha}}
\newcommand{\la}{{\lambda}}
\newcommand{\sa}{{\sigma}}
\newcommand{\iy}{{\infty}}
\newcommand{\vphi}{{\varphi}}
\newcommand{\vep}{{\varepsilon}}
\newcommand{\g}{{\gamma}}
\newcommand{\de}{{\delta}}
\newcommand{\Om}{{\Omega}}
\newcommand{\be}{{\beta}}
\newcommand{\La}{{\Lambda}}
\newcommand{\bna}{\begin{eqnarray}}
\newcommand{\ena}{\end{eqnarray}}
\newcommand{\ba}{\begin{eqnarray*}}
\newcommand{\ea}{\end{eqnarray*}}
\newcommand{\beq}{\begin{equation}}
\newcommand{\eeq}{\end{equation}}
\DeclareMathOperator*{\esssup}{ess\,sup}
\begin{document}

\title[Discretization Theorems]
{Discretization Theorems for Entire Functions of Exponential Type}
\author{Michael I. Ganzburg}
 \address{212 Woodburn Drive, Hampton,
 VA 23664\\USA}
 \email{michael.ganzburg@gmail.com}
 \keywords{Discretization, Marcinkiewicz--Zygmund type
inequalities,
 entire functions of exponential type, exponential
 polynomials, algebraic polynomials}
 \subjclass {Primary 26D07, 26D15, Secondary 41A10, 41A63}

 \begin{abstract}
 We prove
$L_q(\R^m)$--discretization inequalities
  for entire functions $f$ of exponential type
  in the form
  \ba
  C_2\|f\|_{L_q(\R^m)}
 \le \left(\sum_{\nu=1}^\iy \left\vert f\left(X_\nu\right)
 \right\vert^q\right)^{1/q}
 \le C_1\|f\|_{L_q(\R^m)},\qquad q\in[1,\iy],
 \ea
 with estimates for $C_1$ and $C_2$.
 We find a necessary and sufficient condition on
 $\Omega=\left\{X_\nu\right\}_{\nu=1}^\iy\subset\R^m$
 for the right inequality to be valid and a
 sufficient condition on $\Omega$ for the left one to
 hold true.
 In addition, $L_\iy(Q^m_b)$-discretization
 inequalities on an $m$-dimensional cube are proved for
 entire functions of exponential type
 and exponential polynomials.
 \end{abstract}
 \maketitle

 \section{Introduction}\label{S1}
\setcounter{equation}{0}
\noindent
In this paper we prove
discretization theorems
(often called Marcinkiewicz or Marcinkiewicz--Zygmund type
inequalities)
  for entire functions of exponential type (EFETs)
  on $\R^m$ and on an $m$-dimensional cube.
\subsection{Notation and Definitions}\label{S1.1n}
Let $\R^m$ be the Euclidean $m$-dimensional space with elements
$x=(x_1,\ldots,x_m),\, y=(y_1,\ldots,y_m),
\,t=(t_1,\ldots,t_m)$,
the inner product $(t,x):=\sum_{j=1}^mt_jx_j$,
and the norm $\vert x\vert:=\sqrt{(x,x)}$.
Next, $\CC^m:=\R^m+i\R^m$ is the $m$-dimensional complex
space with elements
$z=(z_1,\ldots, z_m)=x+iy,\,w=(w_1,\ldots, w_m)$,
the symmetric bilinear form $(z,w):=\sum_{j=1}^mz_jw_j$, and
the norm $\vert z\vert:=\sqrt{\vert x\vert^2+\vert y\vert^2}$.
In addition, $\N:=\{1,\,2,\ldots\};\,
\Z^m$ denotes the set of all integral lattice points in $\R^m$;
$\Z^m_+$ is a subset of $\Z^m$
of all points with nonnegative coordinates;
 $\mathring{S}$ is the interior of
 a set $S\subseteq\R^m$;
and the symbol $\mathrm{card}(G)$ represents
the cardinal number of a finite set $G$.
We also use multi-indices $k=(k_1,\ldots,k_m)\in \Z^m_+$
with
 \ba
 \langle k\rangle:=\sum_{j=1}^m k_j,\qquad
  x^k:=x_1^{k_1}\cdot\cdot\cdot x_m^{k_m},\qquad
 D^k:=\frac{\partial^{k_1}}{\partial x_1^{k_1}}\cdot\cdot\cdot
 \frac{\partial^{k_m}}{\partial x_m^{k_m}}.
 \ea
 We also use the following standard norms on $\R^m$:
\ba
  \|x\|_1:=\sum_{j=1}^m\left\vert x_j\right\vert,\qquad
   \|x\|_2:=\vert x\vert,\qquad
  \|x\|_\iy:=\max_{1\le j\le m}\left\vert x_j\right\vert,\qquad x\in\R^m.
 \ea
 Given $M>0$, these norms generate the following sets:
the $m$-dimensional octahedron
$O^m_M:=\left\{t\in\R^m: \|x\|_1\le M\right\}$,
the $m$-dimensional ball
$\BB^m_M:=\{t\in\R^m: \|x\|_2\le M\}$,
and the $m$-dimensional cube
$Q^m_M:=\left\{t\in \R^m: \|x\|_\iy\le M\right\}$
in $\R^m$, respectively.
We also need the following notations: $\BB^m_R(x):=x+\BB^m_R,\,
 Q_h^m(x):=x+Q^m_h,\,
 x\in\R^m$, and $[A,B]^m:=Q_h^m(x_0)$,
where $h:=(B-A)/2$ and $x_0:=((B+A)/2,\ldots,(B+A)/2)$.

 Next, let $V$
  be a centrally symmetric (with respect to the origin)
 closed
 convex body in $\R^m$
 with the \emph{width} $w(V)$, and
 the \emph{diameter} $d(V)$.
 Here, $w(V)$
 is the minimum distance between two parallel supporting
  hyperplanes of $V$
 and $d(V)$ is
 the maximum distance between two points of $V$.
 In addition, let
 $V^*:=\{y\in\R^m: \forall\, t\in V, \vert (t, y)\vert \le 1\}$
 be the \emph{polar} of $V$. Then $V^{**}=V$ (see, e.g., \cite[Sect. 14]{R1970})
 and the following relation is valid
 (see \cite[Eqn. (1.6)]{G2023}):
 $w(V^*)=4/d(V)$. In particular,
 \beq\label{E1.0}
 w\left(O^m_{1/M}\right)=w\left(\left(Q^m_{M}\right)^*\right)=2/\left(M\sqrt{m}\right),
 \qquad d\left(O^m_{1/M}\right)=d\left(\left(Q^m_{M}\right)^*\right)=2/M.
 \eeq

 In addition, $\vert S\vert_l$ denotes the $l$-dimensional
 Lebesgue measure
of a $l$-dimensional measurable set $S\subset\R^m,\,1\le l\le m$.
We also use the floor function
 $\lfloor a \rfloor$ and the ceiling function
 $\lceil a \rceil$.

 Furthermore, let
 $L_{q}(S)$ be a space of all measurable
  complex-valued
  functions $F$ defined on a measurable set
  $S\subseteq\R^m$
  with the finite norm
 \ba
 \|F\|_{L_{q}(S)}
 :=\left\{\begin{array}{ll}
 \left(\int_S\vert F(x)\vert^q dx\right)^{1/q}, & 1\le q<\iy,\\
 \esssup_{x\in S} \vert F(x)\vert, &q=\iy.
 \end{array}\right.
 \ea
 In addition, $C(K)$ is a space of all continuous complex-valued
  functions $F$ defined on
 a compact $K\subset\R^m$
  with the finite norm
 $\|F\|_{C(K)}:=\max_{x\in K} \vert F(x)\vert$,
 and $C_{\R}(K)$ is a subspace of all real-valued
  functions from $C(K)$.

 \begin{definition}\label{D1.1}
 We say that a countable set $\Omega=\left\{X_\nu\right\}_{\nu=1}^\iy\subset\R^m$
 is a $\de$-covering net for $\R^m$, where $\de>0$, if for every $x\in\R^m$
 there exists $X_\nu\in\Omega$ such that
 $\left\|x-X_\nu\right\|_\iy<\de$.
 \end{definition}

 \begin{definition}\label{D1.2}
 We say that a countable set $\Omega=\left\{X_\nu\right\}_{\nu=1}^\iy\subset\R^m$
 is a $\de_1$-packing net for $\R^m$, where $\de_1>0$, if
 $\inf_{\nu\ne\mu}\left\|X_\nu-X_\mu\right\|_\iy\ge\de_1$.
 \end{definition}

 \begin{definition}\label{D1.2a}
 We say that a countable set $\Omega=\left\{X_\nu\right\}_{\nu=1}^\iy\subset\R^m$
 is a $(\de_1,N)$-packing net for $\R^m$, where $\de_1>0$, if
 there exists $N\in\Z^1_+$ such that
 $\sup_{\nu\in\N}\mathrm{card}\left(\Omega\cap
 \mathring{Q}_{\de_1/2}^m\left (X_\nu\right)\right)\le N+1$.
 \end{definition}
 \noindent
 Certain properties of $\de$-covering
 and $\de_1$-packing nets for $\R^m$ are discussed in Lemma \ref{L2.1b}.
 In particular, it follows from Definition \ref{D1.2a} and Lemma \ref{L2.1b} (b) that
 the definitions of a $\de_1$-packing net and $(\de_1,0)$-packing one for $\R^m$
 are equivalent.

 Given a bounded set $A\subset\R^m$, the set of all trigonometric polynomials
 $$T(x)=\sum_{\eta\in A\cap \Z^m}c_\eta\exp[i(\eta,x)]$$
  with complex
 coefficients is denoted by $\TT(A)$.
 In the univariate case we
  use the notation $\TT_n:=\TT([-n,n]),\,n\in\Z^1_+$.
   In cases of $A=Q_\sa^m$ and $A=\BB_\sa^m$, more general sets of entire
functions are defined below.

  \begin{definition}\label{D1.3}
 We say that an entire function $f:\CC^m\to \CC^1$ has exponential or spherical type $\sa,\,\sa>0$,
 if for any $\vep>0$ there exists a constant $C_0(\vep,f)>0$ such that
 for all $z\in \CC^m,\,
 \vert f(z)\vert\le C_0(\vep,f)\exp\left(\sa(1+\vep)\sum_{j=1}^m\left\vert z_j\right\vert\right)$ or
 $\vert f(z)\vert\le C_0(\vep,f)\exp\left(\sa(1+\vep)\vert z\vert\right)$, respectively.
 \end{definition}
 \noindent
   The sets of all entire functions of exponential and spherical types $\sa$ are denoted
  by $B_{\sa,m}$ and $B_{\sa,m,S}$, respectively.
  In the univariate case we use the notation
  $B_\sa:=B_{\sa,1}=B_{\sa,1,S},\,\sa>0$. Note that $B_{\sa,m,S}\subseteq B_{\sa,m}$.

  Throughout the paper, if no confusion may occur,
  the same notation is applied to
  $f\in B_{\sa,m}$ or $f\in B_{\sa,m,S}$ and its restriction to $\R^m$ (e.g., in the form
  $f\in  B_{\sa,m}\cap L_{q}(\R^m))$.
  The classes $B_{\sa,m}$ and $B_{\sa,m,S}$ were
  defined by Bernstein \cite{B1948} and Nikolskii
   (see e.g., \cite[Sects. 3.1, 3.2.6]{N1969} or
  \cite[Definition 5.1]{DP2010}), respectively.
  Certain standard properties of functions from $B_{\sa,m}$ are presented
   in Lemma \ref{L2.7}.

   In this paper we discuss two major classes of
   multivariate polynomials. The first one is the set
   $\PP_{n,m}$
   of all polynomials
   $P(x)=\sum_{\langle k\rangle\le n}c_k x^k$
   in $m$ variables with
  complex coefficients
   of total degree at most $n,\,n\in\Z^1_+$.
   The second one is the set
   $\QQ_{n,m}$
   of all polynomials
   $P(x)=\sum_{k\in \Z^m_+\cap Q^m_n}c_k x^k$
   in $m$ variables with
  complex coefficients
   of degree at most $n,\,n\in\Z^1_+$,
   in each variable.
    Both classes coincide in the univariate case, and we
  use the notation $\PP_n:=\PP_{n,1}$.
  In addition, we use the Chebyshev polynomial
   of the first kind
   \beq\label{E1.1}
T_n(u):=(1/2)\left(\left(u+\sqrt{u^2-1}\right)^n
+\left(u-\sqrt{u^2-1}\right)^n\right),\,u\in\R^1.
\eeq

  Throughout the paper $C,\,C_1,\ldots,C_{8}$
  denote positive constants independent
of essential parameters.
 Occasionally we indicate dependence on certain parameters.
 The same symbols $C,\,C_1,\,C_2$, and $C_3$ do not
 necessarily denote the same constants in different occurrences,
 while $C_l,\,4\le l\le 8$,
 denote the same constants in different occurrences.

 A short survey, main results, and an outline of the proofs are
 presented in Sections \ref{S1.2n}--\ref{S1.4n}, respectively.

  \subsection{Discretization Theorems}\label{S1.2n}
  Let $B$ be a vector space of measurable functions on a
  measurable set $S\subseteq\R^m$
  and let $q\in[1,\iy]$.
  Discretization theorems
  for $B$ state that there exist a finite (for a bounded $S$)
  or countable (for an unbounded $S$) set of knots
  $\Omega=\left\{X_\nu\right\}_{\nu\in\Delta}\subset S$
  and constants
  $C_1=C_1(B,\Omega,S,q,m)
  \ge C_2=C_2(B,\Omega,S,q,m)$
  such that for all $f\in B$,
  \beq\label{E1.3.1b}
  C_2\|f\|_{L_q(S)}
 \le \left(\sum_{\nu\in\Delta} \left\vert f\left(X_\nu\right)
 \right\vert^q\right)^{1/q}
 \le C_1\|f\|_{L_q(S)}.
 \eeq
 For $q=\iy$, the right inequality of \eqref{E1.3.1b} is trivial
 with $C_1=1$, and
 the left one with $C_2\in(0,1)$ can be written in the form
 \beq\label{E1.3.1c}
 \|f\|_{L_\iy(S)}
 \le (1+\g)\sup_{\nu\in\Delta} \left\vert f\left(X_\nu\right)
 \right\vert,
 \eeq
where $\g>0$.
In case of a compact set $S,\,q\in[1,\iy)$,
and $\mathrm{card}(\Omega)=\La$, inequalities
\eqref{E1.3.1b} are often written in the following form
of so-called  Marcinkiewicz-type theorems
(see, e.g., \cite[Eqn. (1.2)]{DPTT2019}):
\beq\label{E1.3.1d}
  C_2\|f\|_{L_q(S)}
 \le \left(\frac{1}{\La}
 \sum_{\nu=1}^\La
  \left\vert f\left(X_\nu\right)
 \right\vert^q\right)^{1/q}
 \le C_1\|f\|_{L_q(S)},
 \eeq
where $\La=\La(B,\Omega,S,q,m)$ and
$C_1=C_1(q,m)
  \ge C_2=C_2(q,m)$.
  In certain cases, the set $\Omega$ can be explicitly
  identified.
  "Good" estimates of $\La$ are needed for each
  discretization theorem with a compact $S$.
  In this paper, we often call inequalities
   \eqref{E1.3.1b}, \eqref{E1.3.1c},
  and \eqref{E1.3.1d} discretization theorems.

Discretization theorems had been initiated
in the 1930s--1940s
 by Bernstein (1931 and 1948), Cartwright (1936),
Marcinkiewicz (1936), Marcinkiewicz
and Zygmund (1937),
Duffin and Shaeffer (1945), and others.
Influenced by problems of metric entropy,
numerical integration, and interpolation,
this topic has revisited in the  1990s--2020s
(see detailed surveys by Lubinsky \cite{L1998},
Schmeisser and Sickel \cite{SS2000},
Bos et al. \cite{BMSV2011}, Dai et al. \cite{DPTT2019},
Kro\'{o} \cite{K2021},
recent papers by Temlyakov  \cite{T2017},
Dai et al. \cite{DPSTT2021},  Kro\'{o} \cite{K2022},
 and the references therein).

In most publications, containing discretization theorems,
the space $B$ is one of the following spaces:
real- or complex-valued trigonometric or algebraic
polynomials, EFETs, and exponential polynomials.

 \subsubsection{Trigonometric Polynomials}\label{S1.2.1n}
 The story begins, like many others in
approximation theory, with Bernstein in 1931
who proved \cite[Eqns. (6),(22)]{B1931}
the following inequalities for $S=[0,2\pi],\,B=\TT_n,\,q=\iy,\,
N\in\N,\,N>n$, and $T_n\in\TT_n$:
\bna
 \left\|T_n\right\|_{L_\iy([0,2\pi])}
 &\le& \sqrt{\frac{2N+1}{2N-2n+1}}
 \max_{0\le\nu\le 2N} \left\vert
  T_n\left(\frac{2\nu\pi}{2N+1}\right)
 \right\vert
 \nonumber\\
 &=& (1+O(n/N))\max_{0\le\nu\le 2N} \left\vert
  T_n\left(\frac{2\nu\pi}{2N+1}\right)
 \right\vert,\label{E1.3.1e}\\
 \left\|T_n\right\|_{L_\iy([0,2\pi])}
 &\le& \left(\cos \frac{n\pi}{2N}\right)^{-1}
 \max_{0\le\nu\le 2N-1} \left\vert
  T_n\left(\frac{\nu\pi}{N}\right)
 \right\vert
 \nonumber\\
 &=& (1+O(n/N))\max_{0\le\nu\le 2N-1} \left\vert
  T_n\left(\frac{\nu\pi}{N}\right)
 \right\vert,\label{E1.3.1f}
 \ena
 as $n/N\to 0$, where inequality \eqref{E1.3.1f} is sharp for
 $n\vert N$. Note that \eqref{E1.3.1c}
 immediately follows from \eqref{E1.3.1e} and \eqref{E1.3.1f}
 for a large enough $N$
 and, in addition,
  $\La=2N$ for \eqref{E1.3.1f}.

 A version of \eqref{E1.3.1e} for polynomials
 on the unit circle was obtained by Sheil-Small
 \cite[Theorem 1]{SS2008}.
 Dubinin \cite[Theorem 1]{D2011} and Kalmykov
 \cite[Theorems 1 and 2]{K2012}
 extended \eqref{E1.3.1f} for polynomials
 on the unit circle and on a circular arc,
 respectively.

 Discretization theorem \eqref{E1.3.1d}
 for $S=[0,2\pi]$ and $B=\TT_n$   was proved by
 Marcinkiewicz \cite[Theorems 9 and 10]{M1936}
 (see also \cite[Theorem 7.5]{Z1959}) for $q\in(1,\iy)$
 and $\La=2n+1$
 and by Marcinkiewicz and Zygmund \cite[Theorem 7]{MZ1937}
 (see also \cite[Eqn. 7.29]{Z1959}) for $q=1$
 and $\La=\lceil 2(1+\vep)n\rceil$ with a fixed $\vep>0$
 and all $n\in\N$.
 These results were extended to
 $S=[0,2\pi]^m,\,q\in[1,\iy]$,
 and multivariate
 polynomials from $\TT(\Pi)$, where
 $\Pi:=\prod_{j=1}^m[-M_j,M_j],\,M_j\in\N,\,1\le j\le m$,
 is a $m$-dimensional parallelepiped.
 In particular,
 $\La=\mathrm{card}(\Pi\cap \Z^m)$ for $q\in(1,\iy)$;
 in case of $q=1$ and $q=\iy$, the estimate
 $\La\le C(m)\,\mathrm{card}(\Pi\cap \Z^m)$ is valid
 (see \cite[Sect. 2.1]{DPTT2019} for more details).

 The problem of proving discretization theorem
 \eqref{E1.3.1d} for $\TT(A)$ with "optimal" estimates for
 $\La$ is open in case of more intricate sets $A$. For instance,
 in case of the hyperbolic cross $A$, inequalities
 \eqref{E1.3.1d} with $\La\le C\mathrm{card}(A\cap\Z^m)$
 are known only for $q=2$ (see
 \cite[Theorem 1.1]{T2017}
 and \cite[Theorem 2.2]{DPTT2019}).

 \subsubsection{Algebraic Polynomials}\label{S1.2.2n}
  Discretization theorem \eqref{E1.3.1c}
 for $S=[-b,b],\,b>0$, and $B=\PP_n$
 immediately follows from \eqref{E1.3.1f}
 with $\La=2N$ by the standard substitution $x=b\cos t$
 (see also \cite[p. 91, Lemma 3 (iii)]{C1966}).

 Many discretization theorems have been established for
 multivariate polynomials
 (see, e.g., \cite{BMSV2011,K2011,K2019,K2021,DP2023} and
 references therein). In particular, the following
 general result was recently proved by Dai and Prymak
 \cite[Remark 2.4]{DP2023}:

 \begin{theorem}\label{T1.0}
 For any $\g >0$ there exists a constant $C=C(m,\g)$
 such that for
every $n\in\N$ and every convex body $\C\subset\R^m$,
there exists a set
$\left\{X_\nu\right\}_{\nu=1}^\La\subset\C$
with $\La \le Cn^m$ such that
\beq\label{E1.3.1g}
\|Q\|_{L_\iy(\C)}
 \le (1+\g)\max_{1\le\nu\le\La}
 \left\vert Q\left(X_\nu\right)
 \right\vert
\eeq
for every $Q\in\PP_{n,m}$.
 \end{theorem}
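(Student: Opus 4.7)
The plan is to combine a scaled one-dimensional Chebyshev-node discretization with an induction on the dimension $m$, using a chordwise slicing of $\C$. Fix $\g_1>0$ so that $(1+\g_1)^m\le 1+\g$.

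\emph{Step 1 (univariate base).} Composing \eqref{E1.3.1f} with the affine substitution $x\mapsto (a+b)/2+((b-a)/2)\cos t$ on an arbitrary interval $[a,b]\subset\R^1$, one obtains
\ba
\|P\|_{L_\iy([a,b])}\le (1+\g_1)\max_{0\le \nu\le 2N-1}\left\vert P\left(\tfrac{a+b}{2}+\tfrac{b-a}{2}\cos\tfrac{\nu\pi}{N}\right)\right\vert,\qquad P\in\PP_n,
\ea
provided $N=\lceil c(\g_1)n\rceil$ is chosen so that $(\cos(n\pi/(2N)))^{-1}\le 1+\g_1$. This proves the theorem when $m=1$, with $\La=2N=O(n)$ abscissae lying inside $[a,b]$.

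\emph{Step 2 (chord slicing).} Choose a coordinate direction $e_m$ in which $\C$ has positive width, let $D\subset\R^{m-1}$ be the projection of $\C$ orthogonal to $e_m$ (itself a convex body), and for each $y\in D$ let $[a(y),b(y)]$ be the $e_m$-chord of $\C$ above $y$. For $Q\in\PP_{n,m}$ and each fixed $y$, $t\mapsto Q(y,t)$ is in $\PP_n$, so Step 1 applied chordwise yields
\ba
\|Q\|_{L_\iy(\C)} \le (1+\g_1)\sup_{y\in D}\max_{0\le \nu\le 2N-1}\vert Q(y,\xi_\nu(y))\vert,
\ea
where $\xi_\nu(y)=(a(y)+b(y))/2+((b(y)-a(y))/2)\cos(\nu\pi/N)$.

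\emph{Step 3 (main obstacle and workaround).} The hard point is that the $2N$ functions $y\mapsto Q(y,\xi_\nu(y))$ are \emph{not} polynomials in $y$, because $a(y),b(y)$ are merely concave/convex rather than polynomial; a direct induction on $D$ is therefore blocked. I would overcome this by a geometric decomposition: via John's theorem reduce to the case where $\C$ is sandwiched between the unit ball and its $m$-dilate, and then partition $\C$ into $L=L(m)$ convex pieces of the form $\{(y,t):y\in D_j,\,\ell_j^-(y)\le t\le \ell_j^+(y)\}$ with linear $\ell_j^\pm$ and each $D_j$ a convex body in $\R^{m-1}$. On each piece the abscissae $\xi_\nu(y)$ are linear in $y$, so $y\mapsto Q(y,\xi_\nu(y))$ is a polynomial in $y$ of total degree $\le n$ on $D_j$. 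The inductive hypothesis then furnishes a $(1+\g_1)$-norming mesh inside $D_j$ of cardinality $C(m-1,\g_1)n^{m-1}$, and combining over the $2N$ values of $\nu$ and the $L(m)$ pieces produces a mesh $\Omega\subset\C$ of cardinality $\La\le C(m,\g)n^m$. The telescoped $(1+\g_1)^m\le 1+\g$ factors deliver the stated constant. The most delicate step is constructing the piecewise-affine chord decomposition uniformly across all convex bodies $\C\subset\R^m$ and verifying that every constructed abscissa genuinely lies inside $\C$; this geometric reduction is where the real work of the proof is concentrated.
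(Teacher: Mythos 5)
First, a point of context: the paper does not prove Theorem \ref{T1.0} at all; it is quoted from Dai and Prymak \cite{DP2023} (Remark 2.4 there), where it is the resolution of a long-standing problem of Kro\'{o} on optimal polynomial meshes for arbitrary convex bodies. So any proof you give must stand on its own, and yours does not: the decisive step fails.

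The gap is exactly where you locate ``the real work,'' in Step 3, and it is not merely delicate but impossible as stated. If a convex body $\C$ is partitioned into finitely many pieces of the form $\left\{(y,t):y\in D_j,\ \ell_j^-(y)\le t\le \ell_j^+(y)\right\}$ with \emph{affine} $\ell_j^{\pm}$, then for every $y$ in the projection $D$ the upper chord endpoint satisfies $b(y)=\max_{j:\,y\in D_j}\ell_j^+(y)$, i.e.\ $b$ (and likewise $a$) is piecewise affine; hence the portion of $\partial\C$ seen in the direction $e_m$ is piecewise flat. For the Euclidean ball, $b(y)=\sqrt{1-\vert y\vert^2}$ is not piecewise affine on any subregion of positive measure, so no such decomposition with $L=L(m)$ pieces exists, and John's theorem does not help since sandwiching between balls does not flatten the boundary. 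Your chord-slicing induction (Steps 1--2 are fine, and the observation that affine substitutions keep $y\mapsto Q(y,\xi_\nu(y))$ in $\PP_{n,m-1}$ is correct) therefore proves the theorem only for convex \emph{polytopes}, with a constant depending on the number of facets --- which is essentially Kro\'{o}'s earlier result \cite{K2011}, explicitly mentioned in Section \ref{S1.2.2n}. The natural repair, approximating $\C$ from inside by a polytope that is $(1+\g)$-norming for $\PP_{n,m}$, requires accuracy of order $n^{-2}$ near $\partial\C$ and hence a number of facets growing with $n$ (roughly $n^{m-1}$); feeding that into the polytope case inflates the mesh cardinality well beyond $Cn^m$, which is precisely why the general convex-body case remained open until \cite{DP2023}, whose argument proceeds by a different, curvature-adapted construction rather than by a bounded piecewise-affine decomposition. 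As it stands, your proposal does not establish Theorem \ref{T1.0}.
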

 Kro\'{o}  conjectured in \cite[p. 1118]{K2011}
  that the
  statement of Theorem \ref{T1.0} holds true for
  $1+\g$ replaced with $C$ in \eqref{E1.3.1g}.
  He proved the conjecture in \cite[Theorems 1--4]{K2011}
   for all convex polytopes
  and certain other domains in $\R^m$.
  In addition, he established
   Theorem \ref{T1.0} for $m=2$ in
   \cite[Main Theorem]{K2019}.

   \subsubsection{EFETs}\label{S1.2.3n}
   Unlike polynomials, the discretization of EFETs on $\R^m$
   obviously requires an infinite set of knots.

   Discretization theorems for EFETs in case of $q=\iy$ (i.e., inequalities
   in the form of \eqref{E1.3.1c}) have been known  since 1936.
   The celebrated result proved by Cartwright
   \cite{C1936} (see also \cite[Sect. 4.3.3]{T1963})
   states that for any $p>\sa>0$ and every
   $f\in B_\sa,\,
   \|f\|_{L_\iy(\R^1)}
 \le C(p,\sa)\sup_{\nu\in\Z^1} \left\vert f
 \left(\nu\pi/p\right)
 \right\vert$.
Duffin and Schaeffer \cite[Theorem 1]{DS1945}
 strengthened this result by proving that for every
 sequence $\Omega=\left\{X_\nu\right\}_{\nu=-\iy}^\iy
 \subset\R^1$,
 satisfying the conditions
 \beq\label{E1.3.1h}
 \sup_{\nu\in\Z^1}\left\vert X_\nu-\nu\pi/p\right\vert<\iy,
 \qquad
 \inf_{\nu\ne\mu}\left\vert X_\nu-X_\mu\right\vert>0,
 \eeq
 the following inequality holds:
 $\|f\|_{L_\iy(\R^1)}
 \le C(p,\sa,\Omega)\sup_{\nu\in\Z^1} \left\vert f
 \left(X_\nu\right)\right\vert$.
 The careful analysis of the proofs of these results shows that
 $\lim_{p\to\iy}C(p,\sa)=\lim_{p\to\iy}C(p,\sa,\Omega)=1$,
so  \eqref{E1.3.1c} is valid for $S=\R^1,\,B=B_\sa$,
 and $\Delta=\Z^1$.

 Bernstein \cite[Theorem 1]{B1948b} introduced a weakened condition
 (compared with \eqref{E1.3.1h})
 \beq\label{E1.3.1i}
 0<X_{\nu+1}-X_{\nu}\le \pi/p,\qquad\nu\in\Z^1,\quad p>\sa,
 \eeq
 that guarantees the validity of
 the following nonperiodic version of \eqref{E1.3.1f}:
 \ba
 \|f\|_{L_\iy(\R^1)}
 \le \left(\cos \frac{\sa\pi}{2p}\right)^{-1}
 \sup_{\nu\in\Z^1} \left\vert f
 \left(X_\nu\right)\right\vert
 \ea
  for every $f\in B_\sa$ of
 at most polynomial growth on $\R^1$.
 More univariate Cartwright-type theorems can be found in
 \cite{BU2016} and in the references therein.

Logvinenko \cite[Theorem 1]{L1990}
 proved a multivariate Cartwright-type theorem,
 replacing univariate conditions
   \eqref{E1.3.1h} and \eqref{E1.3.1i}
 with the condition that
 $\left\{X_\nu\right\}_{\nu=1}^\iy\subset\R^m$ is a
 $\de$-covering net for $\R^m$.
 His result states that if
 $\de\sa<(2(\lceil em\rceil+1)^{-1}$, then for every
 $f\in B_{\sa,m}$,
 \beq\label{E1.3.1j}
 \|f\|_{L_\iy(\R^m)}
 \le e^\de(1-\de\sa)^{-1}
 \sup_{\nu\in\N} \left\vert f
 \left(X_\nu\right)\right\vert.
 \eeq
 Thus \eqref{E1.3.1c} holds  for $S=\R^m,\,B=B_{\sa,m}$,
 and $\Delta=\N$.
 Earlier versions of this result
 with a stronger condition
  were obtained in Theorem 1 of
 \cite{L1974,L1975}.
 Note that a $\de$-covering net is defined differently in
 \cite{L1974,L1975,L1990} for $m>1$; namely,
 the norm $\|\cdot\|_\iy$ of Definition \ref{D1.1}
 is replaced by $\|\cdot\|_1$ in these publications.

 The celebrated univariate discretization theorem
for $f\in B_\sa\cap L_q(\R^1),\,q\in (1,\iy)$,
was proved by Plancherel and P\'{o}lya \cite{PP1937} in the following form:
\ba
  C_2\|f\|_{L_q(\R^1)}
 \le \left(\sum_{\nu\in\Z^1} \left\vert f\left(\pi \nu/\sa\right)
 \right\vert^q\right)^{1/q}
 \le C_1\|f\|_{L_q(\R^1)}.
 \ea
 Its extension to $q\in (0,\iy]$ was discussed in \cite[Sects. 2.3, 2.5]{SS2000}
 (see also multivariate versions in \cite[Sect. 3.3.2]{N1969} and \cite[Sect. 1.4.4]{ST1987}).

 Multivariate discretization theorems
 \beq\label{E1.3.1iii}
  C_2\|f\|_{L_q(\R^m)}
 \le \left(\sum_{\nu\in\Z^1} \left\vert f\left(X_\nu\right)
 \right\vert^q\right)^{1/q}
 \le C_1\|f\|_{L_q(\R^m)}
 \eeq
  were discussed
 by Pesenson \cite[Theorem 3.1]{P2007} and
 by Zhai et al. in recent publication \cite[Theorem 4.1]{ZWW2024}.

 In particular, Pesenson proved that there exists
 $\Omega=\left\{X_\nu\right\}_{\nu=1}^\iy\in\R^m$
 such that for all functions
$f\in B_{\sa,m}\cap L_q(\R^m),\,q\in[1,\iy]$,
  inequalities \eqref{E1.3.1iii} are held.
Note that Pesenson actually proved a more general result,
  replacing $f\left(X_\nu\right)$ in \eqref{E1.3.1iii} with special
  compactly supported distributions
  $\Phi_\nu(f)$.

  Zhai et al. proved  \eqref{E1.3.1iii}
  for $f\in B_{\sa,m,S}\cap L_q(\R^m),\,q\in (0,\iy)$,
  and special sets
  $\Omega=\left\{X_\nu\right\}_{\nu=1}^\iy$.
  Note that the authors actually proved a weighted version of \eqref{E1.3.1iii}.
  Certain necessary conditions on a set $\Omega$ are discussed in
  \cite[Lemma 4.3]{ZWW2024} as well.
  The detailed comparison of our and the authors' results
  are given in Remark \ref{R1.5a}.

  A general approach to discretization theorems in various Banach and quasi-Banach spaces was developed by Kolomoitsev and Tikhonov in recent preprint \cite{KT2024}. In particular, the authors obtained discretization theorems
  for EFETs from $L_q(\R^m)$ and other spaces (see \cite[Theorem 6.2 and Example 6.4 (i)]{KT2024}).

  Note that the major difference between
  discretization theorems from \cite{PP1937,P2007,ZWW2024,KT2024} described above
  and our result (see Theorem \ref{T1.1} below) is that we do not include  the condition
  $f \in L_q(\R^m)$. The absence of this condition, on the one hand, strengthens discretization theorem \eqref{E1.3.1iii} but, on the other hand, it makes the proof of the left-hand side of \eqref{E1.3.1iii} more complicated.

  While discretization theorems for general EFETs on $\R^m$ have been known since  1936,
  the corresponding results on compact subsets of $\R^m$ are  unknown. Certainly, they are known for some special classes, e.g., for trigonometric polynomials on parallelepipeds (see Sect. \ref{S1.2.1n}). One more special class of EFETs is discussed below.

 \subsubsection{Exponential Polynomials}\label{S1.2.4n}
 Kro\'{o} \cite[Theorems 6 and 7]{K2022}
 established discretization theorems in the form of
  \eqref{E1.3.1c} for every real-valued multivariate
   exponential polynomial
$E(w, \al)=\sum_{l=1}^\NN \boldsymbol{c}_l e^{(\la_l,w)}$
with the separation condition
$\left\vert \la_l-\la_j\right\vert
\ge\al>0,\,l\ne j$.
The discretization theorems are proved for
convex polytopes and convex polyhedral cones $S$ in $\R^m$
with
\beq\label{E1.3.1k}
\La=\mathrm{card}(\Omega)
\le C(S,m)\left(\frac{\NN}{\sqrt{\g}}\right)^m
\log^m\left(\frac{A_{\NN}}{\al\g}\right),\qquad
A_{\NN}:=\max_{1\le l\le \NN}
\left\vert \la_l\right\vert.
\eeq
Since the exponential type $A_\NN$ of $E(w, \al)$
appears only in the logarithmic term of inequality
\eqref{E1.3.1k}, the main part of the bound is
provided by $\NN^m$.

In addition, Kro\'{o} \cite[p. 72]{K2022} remarked
that if for every exponential polynomial
\beq\label{E1.3.1l}
E_N^*(w)=\sum_{\langle k\rangle\le N}c_k e^{(k,w)},
\qquad c_k\in\R^1,\quad
k\in \Z^m_+\cap O^m_N,
\eeq
of "total degree" $N$, the inequality
$\left\|E_N^*\right\|_{L_\iy(Q^m_b)}
\le (1+\g)
\max_{1\le j\le \La} \left\vert E_N^*(X_j\right)\vert$
holds,
where $S\subset\R^m$ is a compact set with
$\vert S\vert_m>0$ and
$\Omega=\left\{X_\nu\right\}_{\nu=1}^\Lambda\subset S$
is a discrete set, then
\beq\label{E1.3.1m}
\Lambda=\mathrm{card}(\Omega)
\ge C(S,m)\left(N/\sqrt{\g}\right)^m
\eeq
(see also \cite[Theorem 4]{K2022} for $m=1$).
Since for polynomials \eqref{E1.3.1l}
$\NN \sim N^m$ and $A_\NN\sim N$, estimate
\eqref{E1.3.1k} is sharp with respect
to $\NN$ up to the logarithmic term for $m=1$.
However, for $m>1$ the main part of the bound
\eqref{E1.3.1k} for polynomials \eqref{E1.3.1l}
is $N^{m^2}$ versus the lower estimate
$CN^m$ of \eqref{E1.3.1m}.

\subsubsection{}\label{S1.2.5n}
In this paper, we first prove discretization theorems
  in the forms of \eqref{E1.3.1b}
  and \eqref{E1.3.1c} for
 $S=\R^m,\,q\in[1,\iy]$, and $B=B_{\sa,m}$
 (see Theorem \ref{T1.1} and Corollary \ref{C1.2}).
 These inequalities extend and generalize
 the results discussed
 in Section \ref{S1.2.3n}.
 Preciseness of conditions for
 $\Omega
 :=\left\{X_\nu\right\}_{\nu=1}^\iy$ is discussed
 in Theorem \ref{T1.1a}.
 Next, we prove discretization theorem
 \eqref{E1.3.1c} for the cube $S=Q^m_b$
 with any $b>0$
 and $B=B_{\sa,m}$ with a "large" $\sa$
 (see Theorem \ref{T1.3}).
 Finally, we apply this result to prove
 a sharp discretization theorem
 in the spirit of Section \ref{S1.2.4n}
 for a more general class
 of exponential polynomials than \eqref{E1.3.1l}
 (see Theorem \ref{T1.5} and Remark \ref{R1.7}).
 In addition, note that Theorem \ref{T1.1} and
 Corollary \ref{C1.2} for $q=\iy$
 and Theorem \ref{T1.3}
 can be applied to trigonometric polynomials from
 Section \ref{S1.2.1n}
 to obtain certain discretization theorems
 (see Remark \ref{R1.10}).
 Note also that Theorem \ref{T1.0} for algebraic polynomials
 from Section \ref{S1.2.2n}
  is used to prove Theorem \ref{T1.3}.

\subsection{Main Results and Remarks}\label{S1.3n}
  We first discuss discretization results for EFETs on
 $\R^m$.

 \begin{theorem}\label{T1.1}
 Let $\de_1>0,\,\de>0,\,q\in[1,\iy]$, and
 \beq\label{E1.3.01}
 d:=\left\{\begin{array}{ll}
 1,&m=1,\\
 \lfloor m/q\rfloor+1,&m>1.
 \end{array}\right.
 \eeq
 In addition, let $\Omega
 :=\left\{X_\nu\right\}_{\nu=1}^\iy$
 be a countable set of points from $\R^m$
 and let $f\in B_{\sa,m}$.
 Then the following statements hold:\\
 (a) If $\Omega$ is a $(\de_1,N)$-packing net
 for $\R^m,\,N\in\Z^1_+$ (see Definition \ref{D1.2a}),
 and $q\in[1,\iy)$, then
 \beq\label{E1.3.1}
 \left(\sum_{\nu=1}^\iy \left\vert f\left(X_\nu\right)
 \right\vert^q\right)^{1/q}
 \le C_1\|f\|_{L_q(\R^m)},
 \eeq
 where
 \beq\label{E1.3.2}
 C_1=C_1(\de_1,\sa,m,q,N)
 \le \left(\de_1/2\right)^{-m/q}(N+1)^{1/q}
 \left(1+C(m,q)\max\left\{\de_1\sa,(\de_1\sa)^d\right\}\right).
 \eeq
 (b) Let $\Omega$ be a $\de$-covering net
 for $\R^m$ (see Definition \ref{D1.1}) with
 $\de$, satisfying the condition
 $\de\sa \le C(m,q)$.
 If
 $\left(\sum_{\nu=1}^\iy \left\vert f\left(X_\nu\right)
 \right\vert^q\right)^{1/q}
 <\iy, \,q\in[1,\iy)$, then
 \beq\label{E1.3.4}
 \left(\sum_{\nu=1}^\iy \left\vert f\left(X_\nu\right)
 \right\vert^q\right)^{1/q}
 \ge C_2\|f\|_{L_q(\R^m)},
 \eeq
 where
 \bna\label{E1.3.5}
 C_2&=&C_2(\de,\sa,m,q)\nonumber\\
 &\ge& (4\de)^{-m/q}2^{1/q-1}
 \left(1-C(m,q)\max\left\{(\de\sa)^q,(\de\sa)^{dq}\right\}\right)^{1/q}>0.
 \ena
 (c) Let $\Omega$ be a $\de$-covering net
 for $\R^m$  with
 $\de$, satisfying the condition
 $11m^{3/2}\de\sa\le 1$.
 If \linebreak
 $\sup_{\nu\in\N}\left\vert f\left(X_\nu\right)
 \right\vert<\iy$, then
 \beq\label{E1.3.5b}
 \sup_{\nu\in\N}\left\vert f\left(X_\nu\right)
 \right\vert
 \ge C_3\|f\|_{L_\iy(\R^m)},
 \eeq
 where
 $C_3=C_3(\de,\sa,m) \ge 1-m\de\sa$.
 \end{theorem}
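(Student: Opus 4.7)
The plan is to prove all three parts via local Nikolskii-type inequalities combined with Bernstein--Taylor expansions for EFETs, together with elementary packing/covering bookkeeping.

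For part (a), I would first record a \emph{local} Nikolskii inequality for $f\in B_{\sa,m}$: for any cube $Q^m_{\de_1/2}(X_\nu)$,
$|f(X_\nu)|^q\le(\de_1/2)^{-m}\bigl(1+C(m,q)\max\{\de_1\sa,(\de_1\sa)^d\}\bigr)^q\int_{Q^m_{\de_1/2}(X_\nu)}|f|^q$.
The correction term arises by approximating $f$ on the cube by its Taylor polynomial of order $d-1$ about its mean and estimating the remainder through Bernstein's inequality applied to the $d$-th derivatives; the exponent $d=\lfloor m/q\rfloor+1$ is precisely what the sharp global Nikolskii embedding $B_{\sa,m}\cap L_q(\R^m)\hookrightarrow L_\iy(\R^m)$ demands. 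Summation over $\nu$ and invocation of Definition \ref{D1.2a}---using that $x\in Q^m_{\de_1/2}(X_\nu)$ if and only if $X_\nu\in Q^m_{\de_1/2}(x)$, so the cubes cover $\R^m$ with multiplicity at most $N+1$---yields $\sum_\nu\int_{Q^m_{\de_1/2}(X_\nu)}|f|^q\le(N+1)\|f\|_{L_q(\R^m)}^q$ and hence \eqref{E1.3.2}.

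For part (b), I would partition $\R^m$ into Voronoi-type cells $\{V_\nu\}$ with $V_\nu\subseteq Q^m_\de(X_\nu)$, which is possible by the $\de$-covering property. Writing $|f(x)|^q\le 2^{q-1}|f(X_\nu)|^q+2^{q-1}|f(x)-f(X_\nu)|^q$ on each $V_\nu$ and integrating, I control the oscillation term by expanding $f$ about $X_\nu$ to degree $d-1$ and applying the local Nikolskii estimate from part (a) to the derivatives $D^k f$. This produces a remainder of order $C(m,q)\max\{(\de\sa)^q,(\de\sa)^{dq}\}\|f\|_{L_q(\R^m)}^q$; under the smallness hypothesis $\de\sa\le C(m,q)$, this is absorbed on the left to yield \eqref{E1.3.5}. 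The standing hypothesis that the $\ell_q$ sum is finite validates the absorption step and certifies $f\in L_q(\R^m)$ \emph{a posteriori}.

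For part (c), the argument is direct. For each $x\in\R^m$ pick $X_\nu\in\Omega$ with $\|x-X_\nu\|_\iy<\de$ and write $f(x)-f(X_\nu)=\int_0^1\sum_{j=1}^m(\partial_j f)(X_\nu+t(x-X_\nu))(x_j-(X_\nu)_j)\,dt$. By Bernstein's inequality $\|\partial_j f\|_{L_\iy(\R^m)}\le\sa\|f\|_{L_\iy(\R^m)}$ for every $j$, and since $\|x-X_\nu\|_2\le\sqrt m\,\de$, this yields $|f(x)-f(X_\nu)|\le m\de\sa\|f\|_{L_\iy(\R^m)}$; taking the supremum over $x$ and rearranging gives \eqref{E1.3.5b}. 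The slightly stronger hypothesis $11m^{3/2}\de\sa\le 1$ will likely enter only to justify replacing the essential supremum by a true pointwise value at a near-maximizer, via a boundedness/iteration reduction to the bounded-$f$ case. The principal obstacle is calibrating the local Nikolskii constants so that the same Sobolev exponent $d=\lfloor m/q\rfloor+1$ governs both directions simultaneously, and securing the absorption of the oscillation error in (b) under a single smallness condition $\de\sa\le C(m,q)$; once these local Bernstein--Nikolskii estimates are in hand, the packing/covering bookkeeping is routine.
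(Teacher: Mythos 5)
Your part (c) and, more importantly, part (b) contain the same genuine gap: you apply Bernstein-type inequalities to $f$ and then "absorb" an error term of the form $C\max\{(\de\sa)^q,(\de\sa)^{dq}\}\|f\|_{L_q(\R^m)}^q$ (resp. $m\de\sa\|f\|_{L_\iy(\R^m)}$) into the left-hand side, but nothing in the hypotheses gives you $f\in L_q(\R^m)$ (resp. $f\in L_\iy(\R^m)$) in the first place. Bernstein's inequality \eqref{E2.17a}/\eqref{E2.19} is only valid for $f\in B_{\sa,m}\cap L_q(\R^m)$, and the absorption step is vacuous when $\|f\|_{L_q(\R^m)}=\iy$; saying that the finiteness of $\bigl(\sum_\nu|f(X_\nu)|^q\bigr)^{1/q}$ "certifies $f\in L_q(\R^m)$ a posteriori" is circular, since that implication is exactly what statement (b) asserts and is, as the paper stresses (Remark \ref{R1.5a}, Section \ref{S1.4n}), the whole difficulty once the hypothesis $f\in L_q(\R^m)$ is dropped. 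The paper closes this gap by constructing auxiliary functions $f_n=P_{2n}H_{\be,n}$ (Section \ref{S3.2n}): $P_{2n}$ approximates $f$ exponentially well on the growing octahedron $(2n/\tau)O^m_{1/\sa}$ (Lemma \ref{L3.6}, via the Ehrenpreis--Martineau representation), and the factor $H_{\be,n}=[\sin(\be|x|/n)/(\be|x|/n)]^{2n+2\lceil m/(2q)\rceil+2}$ forces $f_n\in B_{\sa_n,m}\cap L_q(\R^m)$ with $\sa_n\to\sa_*=8e^{1/4}\sqrt m\,\sa$, while Properties \ref{P3.8}--\ref{P3.10} show $f_n\to f$ on compacts and $\bigl(\sum_\nu|f_n(X_\nu)|^q\bigr)^{1/q}\le\bigl(\sum_\nu|f(X_\nu)|^q\bigr)^{1/q}+o(1)$. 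One runs your kind of argument for $f_n$, passes to the limit to conclude $f\in L_q(\R^m)$ (resp. $f\in L_\iy(\R^m)$), and only then repeats the argument for $f$ itself. This also explains the condition $11m^{3/2}\de\sa\le1$ in (c): it guarantees $1-m\de\sa_n>0$ since $\sa_*<11\sqrt m\,\sa$; it has nothing to do with replacing an essential supremum by a pointwise value.

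There is also a secondary flaw in your part (a): Definition \ref{D1.2a} bounds $\mathrm{card}\bigl(\Omega\cap\mathring Q^m_{\de_1/2}(X_\nu)\bigr)$ only for cubes centered at points of $\Omega$, so your claim that the cubes $Q^m_{\de_1/2}(X_\nu)$ cover $\R^m$ with multiplicity at most $N+1$ does not follow; for an arbitrary $x$ one only gets a bound like $2^m(N+1)$ (by splitting $Q^m_{\de_1/2}(x)$ into $2^m$ subcubes, each of which, if it meets $\Omega$, lies in some $\mathring Q^m_{\de_1/2}(X_\mu)$), and this extra $2^{m/q}$ cannot be hidden in the correction term of \eqref{E1.3.2}. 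The paper instead works with the cubes $\mathring Q^m_{\de_1/4}(X_\nu)$, notes that each meets at most $N$ others, partitions the family into $N+1$ pairwise disjoint subfamilies by Lemma \ref{L2.2}(a), and combines a mean-value choice of points $Y_\nu^{(j)}$ with the discretization estimate of Lemma \ref{L2.8}; your local Nikolskii estimate with leading constant $1$ plus a correction is essentially Lemma \ref{L2.8}, but the partition step is what delivers the stated constant $(N+1)^{1/q}$.
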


 A simplified version of Theorem \ref{T1.1}
  presented below immediately follows from
  Theorem \ref{T1.1}.

 \begin{corollary}\label{C1.2}
 Let $\de_1>0,\,\de>0$, and $f\in B_{\sa,m}$.
 Then the following statements hold true:\\
 (a) If $\left\{X_\nu\right\}_{\nu=1}^\iy$
 is a $\de_1$-packing
 (see Definition \ref{D1.2}) and
  $\de$-covering net for $\R^m$
 with
 $\de$, satisfying the condition $\de\sa \le C(m,q)$,
 then for $q\in[1,\iy)$,
 \beq\label{E1.3.5d}
 C_2\|f\|_{L_q(\R^m)}
 \le \left(\sum_{\nu=1}^\iy \left\vert
 f\left(X_\nu\right)
 \right\vert^q\right)^{1/q}
 \le C_1\|f\|_{L_q(\R^m)}
 \eeq
 with estimates \eqref{E1.3.2} and \eqref{E1.3.5}
 for $C_1=C_1(\de_1,\sa,m,q,0)$
 and $C_2=C_2(\de,\sa,m,q)$,
 respectively.\\
 (b) For any $\g\in(0,1)$, there exists
 $\de=\de(\g,\sa,m)$ such that
 if $\left\{X_\nu\right\}_{\nu=1}^\iy$
 is a $\de$-covering net for $\R^m$, then
 \ba
 \|f\|_{L_\iy(\R^m)}
 \le (1+\g)\sup_{\nu\in\N}
 \left\vert f\left(X_\nu\right)\right\vert.
 \ea
 \end{corollary}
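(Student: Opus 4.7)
The plan is to derive both parts of Corollary \ref{C1.2} as direct specializations of Theorem \ref{T1.1}, with essentially no work beyond choosing parameters and handling one trivial case split.

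For part (a), I would first invoke the remark stated immediately after Definition \ref{D1.2a}: a $\de_1$-packing net is precisely a $(\de_1,0)$-packing net. Hence Theorem \ref{T1.1} (a) applied with $N=0$ gives the right-hand inequality of \eqref{E1.3.5d} with $C_1=C_1(\de_1,\sa,m,q,0)$ as in \eqref{E1.3.2}. For the left-hand inequality, I would split on whether the discrete $\ell_q$ sum is finite. If $(\sum_{\nu=1}^\iy|f(X_\nu)|^q)^{1/q}<\iy$, then Theorem \ref{T1.1} (b) applies directly, since the Corollary's hypothesis $\de\sa\le C(m,q)$ is exactly the assumption imposed there, yielding the lower bound with $C_2=C_2(\de,\sa,m,q)$ from \eqref{E1.3.5}. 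If instead the sum equals $+\iy$, the desired inequality $C_2\|f\|_{L_q(\R^m)}\le+\iy$ is trivial.

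For part (b), one may assume $\sup_{\nu\in\N}|f(X_\nu)|<\iy$, else the inequality is immediate. The idea is to pick $\de$ small enough that the covering hypothesis $11m^{3/2}\de\sa\le 1$ of Theorem \ref{T1.1} (c) holds \emph{and} simultaneously the constant $C_3\ge 1-m\de\sa$ satisfies $C_3^{-1}\le 1+\g$. The latter condition is equivalent to $m\de\sa\le \g/(1+\g)$, so it suffices to set
\[
\de(\g,\sa,m):=\min\Bigl\{\frac{1}{11m^{3/2}\sa},\;\frac{\g}{m\sa(1+\g)}\Bigr\}.
\]
With this choice, Theorem \ref{T1.1} (c) yields $\|f\|_{L_\iy(\R^m)}\le C_3^{-1}\sup_{\nu\in\N}|f(X_\nu)|\le(1+\g)\sup_{\nu\in\N}|f(X_\nu)|$, as required.

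There is no real obstacle: the corollary is a repackaging of Theorem \ref{T1.1}, with $N=0$ in (a) and a tuned smallness condition on $\de$ in (b). The only mild care needed is the dichotomy on finiteness of the discrete norm in (a), which is forced by the conditional form of Theorem \ref{T1.1} (b).
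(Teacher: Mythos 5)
Your proposal is correct and is exactly the intended derivation: the paper states that Corollary \ref{C1.2} follows immediately from Theorem \ref{T1.1}, and your specialization (using the equivalence of $\de_1$-packing and $(\de_1,0)$-packing nets for part (a), the finiteness dichotomy to handle the conditional hypothesis of Theorem \ref{T1.1} (b), and the choice $\de=\min\{1/(11m^{3/2}\sa),\,\g/(m\sa(1+\g))\}$ so that $(1-m\de\sa)^{-1}\le 1+\g$ in part (b)) is precisely the routine bookkeeping the paper leaves to the reader.
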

 The next corollary reduces conditions on $\Omega$
 compared with Corollary \ref{C1.2} (a).
 \begin{corollary}\label{C1.2a}
 Let $f\in B_{\sa,m},\,q\in[1,\iy)$,
 and $\Omega=\left\{X_\nu\right\}_{\nu=1}^\iy$
 be a $\de$-covering net for $\R^m$ with
 $\de$, satisfying the condition $\de\sa \le C(m,q)$.
 Then there exists a subset
 $\left\{Z_\mu\right\}_{\mu=1}^\iy$
 of $\Omega$ such that \eqref{E1.3.5d} holds
 with  $Z_\mu$ replacing $X_\mu,\,\mu\in\N$,
 and with estimates \eqref{E1.3.2} and \eqref{E1.3.5}
 for $C_1=C_1(\de,\sa,m,q,0)$
 and $C_2=C_2(2\de,\sa,m,q)$,
 respectively.
 \end{corollary}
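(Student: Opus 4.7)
The plan is to extract from $\Omega$ a subset $\{Z_\mu\}_{\mu=1}^\iy$ that is simultaneously a $\de$-packing net and a $2\de$-covering net for $\R^m$, and then invoke Corollary~\ref{C1.2}(a) on this subset. Once such a $\{Z_\mu\}$ is in hand, the conclusion follows from a single application of that corollary, so all the work is in producing the thinning.

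First I would enumerate $\Omega=\{X_\nu\}_{\nu=1}^\iy$ and build $\{Z_\mu\}$ greedily as a maximal $\de$-separated subset: set $Z_1:=X_1$, and once $Z_1,\ldots,Z_k$ have been chosen, let $Z_{k+1}:=X_{j_{k+1}}$, where $j_{k+1}$ is the smallest index $j$ with $\|X_j-Z_i\|_\iy\ge\de$ for every $i=1,\ldots,k$. By construction any two distinct $Z_\mu$'s are $\|\cdot\|_\iy$-separated by at least $\de$, so $\{Z_\mu\}$ is a $\de$-packing net (equivalently, a $(\de,0)$-packing net, by Lemma~\ref{L2.1b}(b)). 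The greedy rule also forces the following maximality: every $X_\nu\in\Omega$ either equals some $Z_\mu$ or was rejected at some stage, and in the latter case $\|X_\nu-Z_\mu\|_\iy<\de$ for some previously chosen $Z_\mu$.

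Next I would verify the covering property and the infiniteness of the sequence. Given $x\in\R^m$, the hypothesis that $\Omega$ is a $\de$-covering net furnishes $X_\nu\in\Omega$ with $\|x-X_\nu\|_\iy<\de$; the maximality above then yields $Z_\mu$ with $\|X_\nu-Z_\mu\|_\iy<\de$, and the triangle inequality gives $\|x-Z_\mu\|_\iy<2\de$. Hence $\{Z_\mu\}$ is a $2\de$-covering net. If the sequence $\{Z_\mu\}$ were finite, say $\{Z_1,\ldots,Z_K\}$, then $\R^m$ would be covered by the finite union $\bigcup_{\mu=1}^K \mathring{Q}^m_{2\de}(Z_\mu)$ of bounded sets, which is impossible; hence the construction produces an infinite sequence, which we re-index as $\{Z_\mu\}_{\mu=1}^\iy$.

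Finally, applying Corollary~\ref{C1.2}(a) to $\{Z_\mu\}_{\mu=1}^\iy$ with packing parameter $\de_1=\de$ and covering parameter $2\de$ delivers \eqref{E1.3.5d} with $C_1=C_1(\de,\sa,m,q,0)$ and $C_2=C_2(2\de,\sa,m,q)$, exactly as claimed. The only book-keeping point is that Corollary~\ref{C1.2}(a) imposes a smallness condition on the covering parameter times $\sa$; this is accommodated by choosing the constant $C(m,q)$ in the present hypothesis $\de\sa\le C(m,q)$ to be at most half of the corresponding threshold in Corollary~\ref{C1.2}(a), so that $(2\de)\sa$ still falls within its range. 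I do not anticipate any substantive obstacle beyond this constant adjustment, since both pieces of the construction (greedy maximality and "maximal packing is a covering") are standard metric-space arguments.
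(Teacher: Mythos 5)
Your proof is correct and follows essentially the same route as the paper: the greedy extraction of a $\de$-packing, $2\de$-covering subset is precisely the content (and the proof) of Lemma \ref{L2.1b}\,(c), which the paper cites together with Corollary \ref{C1.2}\,(a) to conclude. Your additional remarks (infiniteness of the thinned sequence, adjusting $C(m,q)$ so that $2\de\sa$ meets the threshold of Corollary \ref{C1.2}\,(a)) are just the book-keeping the paper leaves implicit.
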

 In the following theorem, we present certain
 necessary conditions
 on a set $\Omega$  for inequalities
 \eqref{E1.3.1}, \eqref{E1.3.4}, and \eqref{E1.3.5b}
 to be valid.

 \begin{theorem}\label{T1.1a}
  If $\Omega
 :=\left\{X_\nu\right\}_{\nu=1}^\iy$
 is a countable set of points from $\R^m$,
 then the following statements hold:\\
  (a) Let there exist a nontrivial function
  $f_0\in B_{\sa,m}\cap L_q(\R^m),
  \,q\in[1,\iy)$, such that
 inequality \eqref{E1.3.1} is valid
 for a certain set $\Omega$, a fixed number
 $C_1>0$,
 and any shifted function $f(\cdot):=
 f_0(\cdot-\al),\,\al\in\R^m$.
 Then for any $\de_1\in (0,1/(m\sa)]$ and
 \beq\label{E1.3.5c}
 0\le N
  \le \left\lfloor
  \left((1/2)C_1\left\|
  f_0\right\|_{L_\iy(\R^m)}/\left\|f_0
  \right\|_{L_q(\R^m)}\right)^{-q}
  \right\rfloor-1,
 \eeq
 the set $\Omega$ is a $(\de_1,N)$-packing net
 for $\R^m$.\\
 (b) Let $\Omega$ be a $(\de_1,N)$-packing net
 for $\R^m,\,N\in\Z^1_+$, and let
 inequality \eqref{E1.3.4} be valid
 for a fixed number $C_2>0$ and any function $f\in B_{\sa,m}$ with
 $\left(\sum_{\nu=1}^\iy \left\vert f\left(X_\nu\right)
 \right\vert^q\right)^{1/q}
 <\iy, q\in[1,\iy)$.
 Then there exists $C(m,q)$ such that
 for any $\de\in (\de^*,\iy)$,
 where
 \beq\label{E1.3.5e}
 \de^*:=\max\left\{\de_1,\frac{C(m,q)}{\sa}\left(\frac{N+1}{\de_1^m C_2^q}\right)^{1/(\g q-m)}\right\},
 \qquad \g:=\lfloor m/q\rfloor+1,
 \eeq
 $\Omega$ is a $\de$-covering net
 for $\R^m$.\\
 (c) Let
 inequality \eqref{E1.3.5b} be valid
 for a fixed number $C_3>0$ and any function $f\in B_{\sa,m}$ with
 $\sup_{\nu\in\N}\left\vert f\left(X_\nu\right)
 \right\vert<\iy$.
 Then  for any $\de\in \left(\left(C_3\sa\right)^{-1},\iy\right),\,
 \Omega$ is a $\de$-covering net
 for $\R^m$.
 \end{theorem}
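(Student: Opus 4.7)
For part (a), I would argue by contradiction: suppose $\Om$ is not a $(\de_1,N)$-packing net, so that some cube $\mathring{Q}_{\de_1/2}^m(X_{\nu_0})$ contains at least $N+2$ points of $\Om$. Pick a sequence $y_k\in\R^m$ with $|f_0(y_k)|\to\|f_0\|_{L_\iy(\R^m)}$, set $\al_k:=X_{\nu_0}-y_k$, and consider the shifted functions $f^{(k)}(\cdot):=f_0(\cdot-\al_k)$, which preserve the $L_q(\R^m)$-norm of $f_0$. By the Bernstein estimate $\|\partial_j f_0\|_{L_\iy(\R^m)}\le\sa\|f_0\|_{L_\iy(\R^m)}$ and the hypothesis $\de_1\le 1/(m\sa)$, each $X_\mu\in\mathring{Q}_{\de_1/2}^m(X_{\nu_0})$ satisfies $|f^{(k)}(X_\mu)|\ge|f_0(y_k)|-(1/2)\|f_0\|_{L_\iy(\R^m)}$. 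Summing the $q$-th powers over the $N+2$ such points, applying \eqref{E1.3.1} to $f^{(k)}$, and letting $k\to\iy$ produces a bound that contradicts the admissible range \eqref{E1.3.5c}.

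For part (b), assume $\Om$ is not a $\de$-covering net, so that there exists $x^*\in\R^m$ with $\|x^*-X_\nu\|_\iy\ge\de$ for every $\nu$. I would test the inequality \eqref{E1.3.4} against
\ba
g(x):=\prod_{j=1}^m\left(\frac{\sin(\sa(x_j-x^*_j)/\g)}{\sa(x_j-x^*_j)/\g}\right)^\g,
\ea
which lies in $B_{\sa,m}\cap L_q(\R^m)$ because $\g=\lfloor m/q\rfloor+1$ makes each coordinate factor $L_q$-integrable. The pointwise bound $|g(X_\nu)|\le(\g/(\sa\de))^\g$, obtained from any coordinate $j$ with $|X_{\nu,j}-x^*_j|\ge\de$, combined with the $(\de_1,N)$-packing bound of at most $N+1$ points of $\Om$ per cube of side $\de_1$, allows me to majorise $\sum_\nu|g(X_\nu)|^q$ by a constant multiple of $(N+1)\de_1^{-m}(\sa\de)^{-(\g q-m)}$ via an annular decomposition and integral comparison. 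Combining this with a universal lower bound on $\|g\|_{L_q(\R^m)}$ depending only on $m$, $q$, and $\sa$, and inverting the resulting inequality, reproduces precisely the threshold \eqref{E1.3.5e}.

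For part (c), the test function $h(x):=\prod_{j=1}^m\sin(\sa(x_j-x^*_j))/(\sa(x_j-x^*_j))$ lies in $B_{\sa,m}$ with $\|h\|_{L_\iy(\R^m)}=1$. If $\Om$ were not a $\de$-covering net, the coordinate-wise decay would give $\sup_\nu|h(X_\nu)|\le 1/(\sa\de)$; inserting this bound into \eqref{E1.3.5b} forces $1/(\sa\de)\ge C_3$, i.e., $\de\le 1/(C_3\sa)$, contradicting the assumption $\de>1/(C_3\sa)$.

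The principal technical obstacle sits in part (b), where the sum $\sum_\nu|g(X_\nu)|^q$ must be estimated sharply enough in terms of $\de_1$, $N$, $\sa$, and $\de$ for the subsequent inversion to reproduce the precise exponent $1/(\g q-m)$ in \eqref{E1.3.5e}. This hinges on combining the packing-to-counting reduction with the tail decay of the product kernel $g$ through a careful annular argument, together with a matching lower bound on $\|g\|_{L_q(\R^m)}$. Parts (a) and (c) reduce to short consequences of the Bernstein inequality and the classical sinc-kernel estimate, respectively.
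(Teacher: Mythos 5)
Your strategy for all three parts is essentially the paper's: necessity is obtained by testing the discretization inequalities on explicit functions --- in (a) a shift of $f_0$ placing its (near-)maximum at the center of a cube of the net, controlled by the Bernstein inequality \eqref{E2.18} and the mean value theorem; in (b) and (c) a sinc-type kernel concentrated at a putative uncovered point $x^*$, combined in (b) with a packing/dyadic-annulus count. The only substantive deviations are that in (a) you work with a maximizing sequence $y_k$ rather than the attained maximum (the paper first observes $f_0(x)\to 0$ as $\vert x\vert\to\iy$, so the maximum is attained; your variant is fine), and that in (b), (c) you use tensor products of univariate sinc powers where the paper uses the radial kernels $\left(\sin(\sa\vert x\vert/\g)/(\vert x\vert/\g)\right)^\g$ and $\sin(\sa\vert x\vert)/\vert x\vert$; for (c) this is immaterial, and for (b) it also works, because on the annulus $\{\|x-x^*\|_\iy\approx R\}$ some coordinate exceeds $R$, so $\max\vert g\vert^q\lesssim (\g/(\sa R))^{\g q}$, which is exactly the decay rate the paper exploits radially.

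One quantitative point in (b) needs fixing: your claimed majorization $\sum_\nu\vert g(X_\nu)\vert^q\lesssim (N+1)\de_1^{-m}(\sa\de)^{-(\g q-m)}$ has the wrong powers of $\sa$. The dyadic count (at most $C(m)(N+1)(2^l\de/\de_1)^m$ points of a $(\de_1,N)$-packing net with $\|X_\nu-x^*\|_\iy\in[2^l\de,2^{l+1}\de)$) times the maximum $(\g/(\sa 2^l\de))^{\g q}$, summed over $l$ (the series converges since $\g q>m$), gives $\sum_\nu\vert g(X_\nu)\vert^q\le C(m,q)(N+1)\de_1^{-m}\sa^{-\g q}\de^{m-\g q}$, i.e., an extra factor $\sa^{-m}$ relative to what you wrote. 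This matters: since $\|g\|_{L_q(\R^m)}^q\asymp\sa^{-m}$, the corrected bound cancels this $\sa^{-m}$ and the inversion yields precisely the threshold \eqref{E1.3.5e}, whereas the bound as written would leave a spurious factor $\sa^{m/(\g q-m)}$ in the threshold that cannot be absorbed into $C(m,q)$. Finally, in (a) note that what the argument actually proves (yours and the paper's alike) is the uniform cardinality bound $\mathrm{card}\left(\Omega\cap\mathring{Q}_{\de_1/2}^m(X_\nu)\right)\le\left(2C_1\|f_0\|_{L_q(\R^m)}/\|f_0\|_{L_\iy(\R^m)}\right)^q$ when \eqref{E1.3.1} is used in its stated direction; so the conclusion is the packing property for the value of $N$ at the top of the range \eqref{E1.3.5c} (and all larger $N$), and your "contradiction with the admissible range" should be phrased for that top value only.
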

 Next, we discuss discretization results for
 entire functions of high exponential type
 on the cube $Q^m_b$. Let us define the function
 \beq\label{E1.3.5a}
\psi(\tau):=\frac{\sqrt{1+\tau^2}}{\tau}
-\log\left(\tau+\sqrt{1+\tau^2}\right),
\qquad \tau\in(0,\iy),
\eeq
with the unique positive zero
$\g_0=1.5088\ldots$; note that $\psi(\tau)<0$ for $\tau> \g_0$ (see also Sect. \ref{S3.1.2n}).

\begin{theorem}\label{T1.3}
 Let $\ES:=\{n(N)\}_{N=1}^\iy$ be
 an increasing sequence of
 positive integers, and
 let fixed numbers $b>0$ and $\tau>\g_0$
  be independent of a given $n=n(N)\in \ES$.
In addition, let $f(\cdot)=f(N,\cdot)$
 be an entire function,
satisfying the inequality
\beq\label{E1.3.6}
\vert f(w)\vert\le D_n
\|f\|_{L_\iy(Q^m_b)}
\exp\left(\sa\sum_{j=1}^m
\left\vert w_j\right\vert\right),\qquad n\in\ES,
\quad w\in\CC^m,
\eeq
where $\sa={n}/{(mb\tau)}$ and
$D_n=D_n(b,\tau,m)$ are constants.
If
\beq\label{E1.3.7}
\lim_{N\to\iy}
D_{n(N)}e^{n(N)\psi(\tau)}=0,
\eeq
then for any $\g>0$, there exist a constant
$C=C(b,\tau,m,\g)$,
an integer $n_0=n_0(b,\tau,m,\g)\in\N$,
and a finite set  $\left\{X_1,\ldots, X_\La\right\}
\subset Q^m_b$
with $\La\le Cn^m$ such that
\beq\label{E1.3.8}
\|f\|_{L_\iy(Q^m_b)}
\le (1+\g)
\max_{1\le j\le \La} \left\vert f(X_j\right)\vert.
\eeq
for every entire $f$, satisfying \eqref{E1.3.6} for $n\ge n_0$
and \eqref{E1.3.7}.
\end{theorem}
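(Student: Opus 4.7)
The plan is to reduce Theorem \ref{T1.3} to polynomial discretization on $Q^m_b$ handled by Theorem \ref{T1.0}, via approximation of $f$ by a truncated multivariate Chebyshev expansion. First I would expand $f(x)=\sum_{k\in\Z^m_+} a_k \prod_{j=1}^m T_{k_j}(x_j/b)$ on $Q^m_b$ and introduce the box truncation
\[
P_n(x) := \sum_{k\in\Z^m_+,\,\max_j k_j\le n} a_k \prod_{j=1}^m T_{k_j}(x_j/b) \in \QQ_{n,m} \subset \PP_{mn,m}.
\]
The crucial feature of the box truncation (as opposed to a simplex truncation $\langle k\rangle\le n$) is that the tail factorizes into $m$ one-dimensional geometric series, producing no polynomial-in-$n$ prefactor.

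Next I would bound $\|f-P_n\|_{L_\iy(Q^m_b)}$. Shifting the contour in the integral representation of $a_k$ to the polyellipse $E^m_\rho$ (the $m$-fold product of Bernstein ellipses with foci $\pm b$ and semi-axis sum $\rho b$, $\rho>1$) yields the standard coefficient bound $|a_k|\le 2^m\|f\|_{E^m_\rho}\rho^{-\langle k\rangle}$, whence
\[
\|f-P_n\|_{L_\iy(Q^m_b)} \le 2^m\|f\|_{E^m_\rho}\sum_{\max_j k_j>n}\rho^{-\langle k\rangle} \le C(m,\rho)\|f\|_{E^m_\rho}\rho^{-n}.
\]
Since each Bernstein ellipse has diameter $b(\rho+\rho^{-1})$, hypothesis \eqref{E1.3.6} together with $\sa = n/(mb\tau)$ and the substitution $\rho=e^\te$ gives $\|f\|_{E^m_\rho}\le D_n\|f\|_{L_\iy(Q^m_b)}e^{n\cosh\te/\tau}$. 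Minimizing $\cosh\te/\tau - \te$ over $\te>0$ yields the critical point $\sinh\te=\tau$, at which the minimum equals exactly $\psi(\tau)<0$ (the assumption $\tau>\g_0$ being essential). Setting $\rho=\tau+\sqrt{1+\tau^2}$ therefore produces
\[
\|f-P_n\|_{L_\iy(Q^m_b)} \le \vep_n\|f\|_{L_\iy(Q^m_b)},\qquad \vep_n := C(b,\tau,m)\, D_n\, e^{n\psi(\tau)},
\]
and \eqref{E1.3.7} forces $\vep_n\to 0$ as $N\to\iy$.

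Finally I would apply Theorem \ref{T1.0} to $P_n\in\PP_{mn,m}$ on the convex body $\C=Q^m_b$ with accuracy parameter $\g/4$: this supplies $\{X_1,\ldots,X_\La\}\subset Q^m_b$ with $\La\le C(m,\g)(mn)^m\le C(b,\tau,m,\g)n^m$ and $\|P_n\|_{L_\iy(Q^m_b)}\le (1+\g/4)\max_{1\le j\le\La}|P_n(X_j)|$. Two triangle inequalities then give
\[
\|f\|_{L_\iy(Q^m_b)} \le (1+\g/4)\max_{1\le j\le\La}|f(X_j)| + (2+\g/4)\vep_n\|f\|_{L_\iy(Q^m_b)},
\]
and choosing $n_0=n_0(b,\tau,m,\g)$ so that $(2+\g/4)\vep_n$ is small enough for $n\ge n_0$ (possible by \eqref{E1.3.7}) absorbs the $\vep_n$-term and yields \eqref{E1.3.8}.

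The main obstacle is the simultaneous calibration in the approximation step: $P_n$ must lie in a polynomial class of total degree $O(n)$, so that Theorem \ref{T1.0} produces only $O(n^m)$ nodes, while the tail estimate must avoid any polynomial-in-$n$ prefactor, since \eqref{E1.3.7} only guarantees $D_n e^{n\psi(\tau)}\to 0$, a decay that could in principle be slower than any negative power of $n$. The box truncation $\max_j k_j\le n$ meets both constraints, and the stationary-point calibration $\sinh\te=\tau$ extracts precisely the exponent $\psi(\tau)$ named in the hypothesis.
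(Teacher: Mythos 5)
Your proposal is correct and follows essentially the same route as the paper: the paper's proof invokes Lemma \ref{L3.7}, which is exactly your box-truncated multivariate Chebyshev expansion with polyellipse coefficient bounds (Lemma \ref{L3.1}) and the calibration $\rho=\tau+\sqrt{1+\tau^2}$ producing the exponent $n\psi(\tau)$, and then applies Theorem \ref{T1.0} to $P_n\in\QQ_{n,m}\subseteq\PP_{mn,m}$ followed by the same triangle-inequality absorption of the $o(1)$ error term (the paper uses accuracy $\sqrt{1+\g}$ where you use $\g/4$, a cosmetic difference).
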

It is easy to verify conditions \eqref{E1.3.6} and
\eqref{E1.3.7} for the following class of EFETs.

\begin{example}\label{Ex1.4}
Let $F_0$ be an entire function of one variable,
 satisfying the inequality
$\left\vert F_0(\xi)\right\vert\le Ce^{\vert\xi\vert},\,
\xi\in\CC^1,$ and, in addition,
$F_0\ge 0$ on $\R^m\setminus\{0\}$ and $F_0(0)>0$.
Then any function of the form
\beq\label{E1.3.9}
f(w):=\int_{Q^m_\sa}F_0((w,y))\,d\mu(y), \qquad w\in\CC^m,
\eeq
where $\sa>0$ and $\mu$ is a positive measure on $Q^m_\sa$,
is entire, and, moreover,
\ba
\vert f(w)\vert
\le \int_{Q^m_\sa}d\mu(y)
\max_{y\in Q^m_\sa}\left\vert F_0((w,y))\right\vert
\le C(f(0)/F_0(0))
\exp\left(\sa\sum_{j=1}^m
\left\vert w_j\right\vert\right),\qquad w\in\CC^m.
\ea
Hence for any function \eqref{E1.3.9},
inequality \eqref{E1.3.6} holds  with
$D_n=C/F_0(0),\,n\in\N$, and obviously \eqref{E1.3.7}
is valid as well. Note that here, $\sa>0$ is any number,
i.e., $\sa$ is not necessarily equal to $n/(mb\tau)$
as assumed in Theorem \ref{T1.3}.
\end{example}
In particular,
the set of all exponential polynomials
$\sum_{l=1}^\NN \boldsymbol{c}_l e^{(\la_l,w)}$
with nonnegative
coefficients is a subset of the family of all functions
\eqref{E1.3.9} with $F_0(\xi)=e^\xi,\,\xi\in\CC^1$, and
all positive discrete measures
$\mu$ on $Q^m_\sa$.
Therefore, Theorem \ref{T1.3} holds  for this set of
exponential polynomials by Example \ref{Ex1.4}.
For exponential polynomials with real or
complex coefficients, the problem of verifying
conditions \eqref{E1.3.6} and \eqref{E1.3.7}
(i.e., a discretization theorem
for these polynomials in view of
Theorem \ref{T1.3}) is more complicated.
A discretization theorem for certain
exponential polynomials with
complex coefficients is presented below.

\begin{theorem}\label{T1.5}
Let a fixed number $b>0$ be independent of a given $N\in\N$.
In addition, let
\beq\label{E1.3.10}
E_N(w):=\sum_{k\in \Z^m_+\cap Q^m_N}c_k e^{(k,w)},
\qquad c_k\in\CC^1,\quad
k\in \Z^m_+\cap Q^m_N,
\eeq
be an exponential polynomial of "degree" at most $N$ in each
variable.
Then for any $\g>0$, there exist a constant
$C=C(b,m,\g)$,
an integer $N_0=N_0(b,m,\g)\in\N$,
and a finite set  $\left\{X_1,\ldots, X_\La\right\}
\subset Q^m_b$
with $\La\le CN^m$ such that for $N\ge N_0$,
\ba
\left\|E_N\right\|_{L_\iy(Q^m_b)}
\le (1+\g)
\max_{1\le j\le \La} \left\vert E_N(X_j\right)\vert
\ea
for every exponential polynomial \eqref{E1.3.10}.
\end{theorem}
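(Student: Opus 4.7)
The plan is to reduce Theorem \ref{T1.5} to Theorem \ref{T1.3} by treating $E_N$ as an entire function of exponential type $\sa\ge N$ and verifying the hypotheses \eqref{E1.3.6}--\eqref{E1.3.7} with an appropriate increasing sequence $\ES=\{n(N)\}_{N=1}^\iy$ and a large $\tau$. Since every index $k$ in \eqref{E1.3.10} satisfies $k_j\le N$ for each $j$, the term-by-term bound
\ba
|E_N(w)|\le\Bigl(\sum_{k\in\Z^m_+\cap Q^m_N}|c_k|\Bigr)\,e^{N\sum_{j=1}^m|w_j|},\qquad w\in\CC^m,
\ea
shows that the exponential type of $E_N$ is automatically controlled at level $N$; the only substantive task is to bound $\sum_k|c_k|$ by $\|E_N\|_{L_\iy(Q^m_b)}$ up to an exponential factor in $N$.

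The key step is a coefficient estimate
\ba
\sum_{k\in\Z^m_+\cap Q^m_N}|c_k|\le K(b,m)^N\,\|E_N\|_{L_\iy(Q^m_b)}
\ea
for some explicit $K(b,m)>0$, with any polynomial-in-$N$ prefactor absorbed into a marginal enlargement of the base. To establish it, I substitute $u_j=e^{w_j}$, which identifies $E_N$ on $Q^m_b$ with the algebraic polynomial $P(u)=\sum_k c_k u^k\in\QQ_{N,m}$ on $[e^{-b},e^b]^m$ and preserves the sup-norm. For a univariate polynomial $p\in\PP_N$ on $[e^{-b},e^b]$, the Bernstein--Walsh inequality bounds $|p(z)|$ on the circle $|z|=e^b$ by $\|p\|_{L_\iy([e^{-b},e^b])}\,M(b)^N$ for an explicit $M(b)$ controlled by the affine map to $[-1,1]$ and the Joukowski parameter of the foci $e^{\pm b}$, and Cauchy's coefficient formula on that circle then gives $\sum_{j=0}^N|a_j|\le K(b)^N\,\|p\|_{L_\iy([e^{-b},e^b])}$. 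Iterating this univariate bound in each of the $m$ variables of $P$ yields the claimed multivariate estimate with $K(b,m)=K(b)^m$.

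Finally, fix $\tau=\tau(b,m)>\max\{\g_0,1/(mb)\}$ so large that $K(b,m)\,e^{mb\tau\psi(\tau)}<1$; this is possible because $\psi(\tau)<0$ for $\tau>\g_0$ and $\tau\psi(\tau)\to-\iy$ as $\tau\to\iy$, so the product can be driven arbitrarily close to $0$. Set $n(N):=\lceil mb\tau N\rceil$; the condition $mb\tau>1$ guarantees that $\ES=\{n(N)\}_{N=1}^\iy$ is an increasing sequence of positive integers, and $\sa:=n(N)/(mb\tau)\ge N$ by construction. The previous two paragraphs then give \eqref{E1.3.6} for $E_N$ with $D_n:=K(b,m)^N$, by the monotonicity $e^{N\sum_j|w_j|}\le e^{\sa\sum_j|w_j|}$, while
\ba
D_{n(N)}\,e^{n(N)\psi(\tau)}\le\bigl(K(b,m)\,e^{mb\tau\psi(\tau)}\bigr)^N\to 0
\ea
(using $\psi(\tau)<0$ together with $n(N)\ge mb\tau N$) verifies \eqref{E1.3.7}. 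Theorem \ref{T1.3} then produces a set $\{X_1,\ldots,X_\La\}\subset Q^m_b$ with $\La\le Cn(N)^m\le C(b,m,\g)\,N^m$ and the required discretization inequality for all sufficiently large $N$; since $\tau$ depends only on $b$ and $m$, the final constants depend only on $b,m,\g$. The principal obstacle is the coefficient bound: the base $K(b,m)$ must be kept explicit enough for the large-$\tau$ choice to absorb it, although polynomial-in-$N$ factors appearing along the way are harmless since only the exponential base enters the limit in \eqref{E1.3.7}.
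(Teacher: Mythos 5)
Your argument is correct, and its overall skeleton coincides with the paper's: identify $E_N$ on $Q^m_b$ with $P(u)=\sum_k c_k u^k\in\QQ_{N,m}$ on $[e^{-b},e^b]^m$ via $u_j=e^{w_j}$, bound $\sum_k|c_k|$ by an exponential-in-$N$ multiple of $\left\|E_N\right\|_{L_\iy(Q^m_b)}$ to get \eqref{E1.3.6} with $D_n=K^N$, set $n(N)=\lceil mb\tau N\rceil$, and absorb the base $K$ by taking $\tau$ large (using $\tau\psi(\tau)\to-\iy$) so that \eqref{E1.3.7} holds and Theorem \ref{T1.3} applies with $\La\le Cn(N)^m\le C(b,m,\g)N^m$. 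The genuine difference is the tool used for the coefficient estimate: the paper invokes its new Lemma \ref{L2.6} (b), which rests on the multivariate Chebyshev-type derivative bound of Lemma \ref{L2.4} (b) and yields the clean constant $\left(\frac{e^{b/4}+e^{-b/4}}{e^{b/4}-e^{-b/4}}\right)^{mN}$ with no extraneous factors, whereas you use the classical Bernstein--Walsh growth estimate on the circle $\vert z\vert=e^b$ plus Cauchy's coefficient formula, iterated variable by variable, which produces a cruder base and polynomial-in-$N$ prefactors. Since the large-$\tau$ choice (the paper's function $G(\tau,b)$, your condition $K(b,m)e^{mb\tau\psi(\tau)}<1$) swallows any fixed exponential base, the loss of sharpness is immaterial here; what your route buys is independence from the paper's Lemma \ref{L2.4} (b) machinery, while the paper's route buys an explicit sharp coefficient inequality of independent interest (and a threshold $\tau_0$ depending only on $b$ rather than on $b$ and $m$, which does not affect the final statement). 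Your extra care that $mb\tau>1$ so that $n(N)$ is strictly increasing, and the observation $\sa=n(N)/(mb\tau)\ge N$ needed to pass from type $N$ to type $\sa$ in \eqref{E1.3.6}, are both correct and consistent with the paper's implicit usage.
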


\begin{remark}\label{R1.5a}
After we had proved Theorems \ref{T1.1} and \ref{T1.1a},
we found very interesting paper \cite{ZWW2024} whose results
are related to these theorems. These results
are discussed in Section \ref{S1.2.3n}. In particular, we mention there that
\cite[Theorem 4.1]{ZWW2024} contains the unnecessary condition
$f \in L_q(\R^m),\,q\in[1,\iy)$, that is absent in Theorem \ref{T1.1}.
The proof of Theorem \ref{T1.1} (b) without this condition is much more difficult and requires special techniques developed here (see Section \ref{S1.4n}).
In addition, the discretization theorem for $q=\iy$ was not proved in \cite{ZWW2024}, while this case is discussed in Theorem \ref{T1.1} (c) and Theorem \ref{T1.1a} (c).
We also note that Theorem \ref{T1.1a} (a) presents a stronger version of the necessary condition on $\Omega$
for inequality
 \eqref{E1.3.1} to hold compared with \cite[Lemma 4.3]{ZWW2024}.
\end{remark}

\begin{remark}\label{R1.6}
Statements (a) of Theorems \ref{T1.1} and \ref{T1.1a}
show that the condition that
$\Omega$ is a $(\de_1,N)$-packing net
 for $\R^m$ is sufficient and necessary for inequality
 \eqref{E1.3.1} to hold with any
 $f\in B_{\sa,m}\cap L_q(\R^m),\,q\in[1,\iy)$,
  and a certain $C_1>0$.
 We do not know the corresponding criterion for
 \eqref{E1.3.4}. It appears plausible that
 the condition that
$\Omega$ is a $\de$-covering net
 for $\R^m$ is necessary for inequality
 \eqref{E1.3.4} to hold  with any
 $f\in B_{\sa,m},\,
 \left(\sum_{\nu=1}^\iy \left\vert f\left(X_\nu\right)
 \right\vert^q\right)^{1/q}
 <\iy, q\in[1,\iy)$, and a certain $C_2>0$.
 This condition is sufficient by Theorem \ref{T1.1} (b)
 and (c) for $q\in[1,\iy]$
 and necessary for $q=\iy$ by Theorem \ref{T1.1a} (c).
 In case of $m=1$ and $q=\iy$,  the precise criterion
 was found by Beurling \cite{B1989} (see also Blank--
 Ulanovskii \cite{BU2016}).
 A "weak" necessary condition for $m\ge 1$ and $q\in[1,\iy)$
  is discussed in
 Theorem \ref{T1.1a} (b).
\end{remark}

\begin{remark}\label{R1.8}
It is difficult to compare conditions and
 constants of Theorem \ref{T1.1} (c) and discretization
  theorem \eqref{E1.3.1j} because a $\de$-covering net
   is defined differently in these results (see Section
   \ref{S1.2.3n}).
\end{remark}

\begin{remark}\label{R1.9}
Note that Logvinenko \cite[Theorem 2]{L1974} announced
a version of Corollary \ref{C1.2} (a) with no estimates
for $C_1$ and $C_2$,
  but no proof has been provided since then.
 \end{remark}

\begin{remark}\label{R1.10}
Since $\TT(V)\subset B_{d(V)/2,m}$,
Theorems \ref{T1.1} (c) and \ref{T1.3} and
 Corollaries \ref{C1.2} (b) and \ref{C1.2a}
are valid for trigonometric polynomials from $\TT(V)$.
In addition, note that Theorem \ref{T1.1} (c) generalizes
 and/or strengthens
 the results from \cite{C1936,DS1945,L1974,L1975}
  (see Section
   \ref{S1.2.3n}).
\end{remark}

\begin{remark}\label{R1.7}
Since every exponential polynomial \eqref{E1.3.1l}
is a polynomial \eqref{E1.3.10}, the estimate
$\Lambda\le CN^m$ of Theorem \ref{T1.5}
 is sharp with respect to $N$ due to estimate
 \eqref{E1.3.1m} (see Section \ref{S1.2.4n}).
\end{remark}

\subsection{Outline of the Proofs}\label{S1.4n}
The proofs of Theorems \ref{T1.1}, \ref{T1.1a}, \ref{T1.3},
and \ref{T1.5} and Corollary \ref{C1.2a} are presented in Section \ref{S4n}.
 They are based, firstly, on certain geometric results and
 properties of algebraic polynomials
  and EFETs given in Section \ref{S2n} and, secondly,
  on approximation of EFETs by polynomials and
  other EFETs discussed in Section \ref{S3n}.

  Major ingredients of the proofs of many
  discretization theorems are  Markov-Bernstein
   type inequalities for polynomials and/or EFETs.
   We need them as well, and three
   Bernstein-type inequalities
    are presented in Lemma \ref{L2.7}.
    In particular, the proof of Theorem \ref{T1.1}
    is based on a technical discretization estimate
    of Lemma \ref{L2.8} that uses a
    Bernstein-type inequality for EFETs
    from Lemma \ref{L2.7} (b).

    While statement (a) of Theorem \ref {T1.1}
    follows from Lemmas \ref{L2.2} (a) and \ref{L2.8}
     (the former one is a certain result from
     combinatorial geometry of independent interest),
    the proof of statement (b) is more complicated
      because the premises of the statement
     do not include
     the assumption of $f\in L_q(\R^m)$
     that prevents us from using
     Bernstein-type inequalities and
     Lemma \ref{L2.8}. That is why we approximate $f$
     by special EFETs $f_n\in L_q(\R^m),\,n\in\N$, and
     study their properties (see Sect. \ref{S3.2n}). Then we prove
     statement (b) of Theorem \ref {T1.1} for functions $f_n$ by using
     geometric Lemmas \ref{L2.1b} and \ref{L2.2} and
     the discretization estimate of Lemma \ref{L2.8}.
     Passing to the limit as $n\to\iy$  completes the
     proof of this statement.
     Statement (c) of Theorem \ref {T1.1} is proved similarly
     but with fewer technicalities.

     Note that the idea of using similar functions belongs to
     Logvinenko  \cite{L1974,L1975,L1990} who applied them
     to the proof of a version of Theorem \ref {T1.1} (c)
     (see \eqref{E1.3.1j} and Remark \ref{R1.8}).
     However, we use different  techniques for
     constructing and studding those functions that allow us
     to extend the discretization theorem
     \eqref{E1.3.1j} to $q\in[1,\iy)$.

     In addition, note that the construction and properties of
     $f_n,\,n\in\N$, are based on an approximation estimate
     of a function $f\in B_{\sa,m}$ by algebraic polynomials
     from $\PP_{n,m}$
     (see Lemma \ref{L3.6}).
     The technique of approximation of
     univariate and multivariate EFETs by algebraic
     polynomials is developed in Section \ref{S3.1n}.
     These estimates extend and/or strengthen earlier
     results by Bernstein \cite{B1946,B1947},
      Logvinenko \cite{L1974,L1975},
      and the author \cite{G1982,G1991,G2019b}.

      The proof of Theorem \ref {T1.1a}
     is based on properties of $(\de_1,N)$-packing
     and $\de$-covering nets for $\R^m$.

To prove Theorem \ref {T1.3},
we use Lemma \ref{L3.7}
that discusses
approximation of a function $f\in B_{\sa,m}$
by algebraic polynomials from $\QQ_{n,m}$.
This step reduces discretization inequality
\eqref{E1.3.8} of Theorem \ref {T1.3}
to discretization inequality
\eqref{E1.3.1g} of Theorem \ref {T1.0}.

Theorem \ref {T1.5} immediately follows from
Theorem \ref {T1.3}
if exponential polynomials \eqref{E1.3.10}
with coefficients $c_k, \,k\in\Z^m_+\cap Q^m_N$,
satisfy special conditions \eqref{E1.3.6} and \eqref{E1.3.7}.
To prove these conditions, it suffices to estimate
$\sum_{k\in\Z^m_+\cap Q^m_N}
\left\vert c_k\right\vert$.
This step is accomplished
  by using two new inequalities for multivariate
  polynomials presented in Section \ref{S2n}
  (see Lemmas \ref{L2.4} (b)
   and \ref{L2.6}).

   In particular, Lemma \ref{L2.4} (b) of
independent interest discusses a sharp estimate
for partial derivatives of a polynomial from
$\QQ_{n,m}$ outside of a cube. This result is a
multivariate generalization of the celebrated
Chebyshev's inequality.
Note that Lemma \ref{L2.4} (b) is proved for
polynomials with complex coefficients
due to the use of general Lemma \ref{L2.2a}
of independent interest that
reduces numerous Markov-Bernstein-Nikolskii
type inequalities for
complex-valued functions to real-valued ones.

\section{Properties of Sets, Polynomials, and Entire Functions}
\label{S2n}
\setcounter{equation}{0}
\noindent
In this section we discuss certain geometric results
and some properties of
polynomials and entire functions of exponential type.
\subsection{Geometric Lemmas}\label{S2.1n}
 First, we discuss properties of $\de$-covering
 and $\de_1$-packing nets for $\R^m$
 (see Definitions \ref{D1.1} and \ref{D1.2}).

\begin{lemma}\label{L2.1b}
(a)
 $\Omega=\left\{X_\nu\right\}_{\nu=1}^\iy$
 is a $\de$-covering net for $\R^m$ if and only if
 $\bigcup_{\nu=1}^\iy \mathring{Q}_\de^m\left(X_\nu\right)=\R^m$
 for the family of open cubes
 $\left\{\mathring{Q}_\de^m\left(X_\nu\right)\right\}_{\nu=1}^\iy$.\\
 (b)
 $\Omega=\left\{X_\nu\right\}_{\nu=1}^\iy$
 is a $\de_1$-packing net  for $\R^m$ if and only if
  the family of open cubes
 $\left\{\mathring{Q}_{\de_1/2}^m\left(X_\nu\right)\right\}_{\nu=1}^\iy$
 is pairwise disjoint.\\
 (c) If $\Omega=\left\{X_\nu\right\}_{\nu=1}^\iy$
 is a $\de$-covering  net for $\R^m$,
 then there exists a subset
 $\Omega^*=\left\{Z_\mu\right\}_{\mu=1}^\iy$
 of $\Omega$ such that $\Omega^*$ is
 a $\de$-packing and a $2\de$-covering  net for $\R^m$.
 \end{lemma}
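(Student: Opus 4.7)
The plan is to dispatch each of the three statements by unfolding the definitions, the only nontrivial step being a greedy packing construction for part (c). All three assertions are elementary geometric properties of the $\|\cdot\|_\iy$ metric and involve no analytic input.

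Parts (a) and (b) I would handle directly from the definitions. For (a), the equivalence $x\in\mathring{Q}_\de^m(X_\nu) \Leftrightarrow \|x-X_\nu\|_\iy<\de$ means that the condition $\bigcup_{\nu=1}^\iy\mathring{Q}_\de^m(X_\nu)=\R^m$ is a verbatim rewriting of Definition \ref{D1.1}. For (b), if some $x$ lay in $\mathring{Q}_{\de_1/2}^m(X_\nu)\cap\mathring{Q}_{\de_1/2}^m(X_\mu)$, then the $\|\cdot\|_\iy$ triangle inequality would give $\|X_\nu-X_\mu\|_\iy<\de_1$, contradicting packing; conversely, for any pair with $\|X_\nu-X_\mu\|_\iy<\de_1$ the midpoint $(X_\nu+X_\mu)/2$ exhibits the intersection. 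No obstacle arises in either case.

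For part (c), I plan to perform a greedy packing construction on $\Om$. After enumerating $\Om=\{X_\nu\}_{\nu=1}^\iy$, I would inductively set $\Om_0:=\emptyset$ and adjoin $X_k$ to $\Om_{k-1}$ precisely when $\|X_k-Z\|_\iy\ge\de$ for every $Z\in\Om_{k-1}$, and then define $\Om^*:=\bigcup_{k\ge 0}\Om_k$. By construction $\Om^*$ is a $\de$-packing subset of $\Om$, while maximality of the selection guarantees that every $X\in\Om$ satisfies $\|X-Z\|_\iy<\de$ for some $Z\in\Om^*$ (trivially if $X\in\Om^*$, by the rejection rule otherwise). Combining this with the hypothesized $\de$-covering property of $\Om$ via a single triangle-inequality step then yields, for every $x\in\R^m$, some $Z_\mu\in\Om^*$ with $\|x-Z_\mu\|_\iy<2\de$, which is the claimed $2\de$-covering.

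The only step deserving explicit care — and essentially the main (mild) obstacle — is to verify that $\Om^*$ is countably infinite, so that it may be indexed as $\{Z_\mu\}_{\mu=1}^\iy$. I would argue by contradiction: a finite $\Om^*=\{Z_1,\ldots,Z_k\}$ would, together with the maximality observation above, force the bounded inclusion $\R^m\subseteq\bigcup_{\mu=1}^k Q_{2\de}^m(Z_\mu)$, contradicting that $\R^m$ is unbounded.
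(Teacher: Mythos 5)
Your proposal is correct and takes essentially the same approach as the paper: parts (a) and (b) are direct definition-unwinding (the paper leaves them as an exercise), and for (c) the paper performs the same greedy selection of a maximal $\de$-separated subset of $\Omega$ followed by a single triangle-inequality step to obtain the $2\de$-covering. Your explicit verification that $\Omega^*$ is infinite handles a point the paper merely asserts (its claim that $\Omega_n\ne\emptyset$), so nothing is missing.
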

 \begin{proof}
 The proofs of statements (a) and (b)
 are simple and left as an exercise to
the reader.

To prove statement (c), we first need certain
inductive constructions.
 Let us set $Z_1:=X_1$ and assume that we can construct
 a set
 $\Omega^*_n:=\left\{Z_1,\ldots,Z_n\right\}\subset \Omega$
 such that the following relations hold:
 \bna
 && \inf_{1\le \nu,\mu\le n;\nu\ne\mu}
 \left\|Z_\nu-Z_\mu\right\|_
 {\iy}\ge\de,\label{E2.1b}\\
 &&\left\{X_1,\ldots,X_n\right\}\subset
 \bigcup_{\mu=1}^n\mathring{Q}_{\de}^m\left(Z_\mu\right).
 \label{E2.1c}
 \ena
 Note that relations \eqref{E2.1b} and \eqref{E2.1c} are
  trivially valid for $n=1$.
 The set $\Omega_n:=\Omega\setminus
 \bigcup_{\mu=1}^n\mathring{Q}_{\de}^m\left(Z_\mu\right)
 \ne \emptyset$
 is a subsequence of $\Omega$,
 and we choose $Z_{n+1}$ as the element of $\Omega_n$ with
 the smallest index.
 Then taking account of \eqref{E2.1b}, we see that
 \eqref{E2.1b} holds for
 $n$ replaced with $n+1$
 since $Z_{n+1}\in\Omega_n$.
In addition, if $X_{n+1}\notin
 \bigcup_{\mu=1}^n\mathring{Q}_{\de}^m\left(Z_\mu\right)$,
 then $X_{n+1}=Z_{n+1}
 \in\mathring{Q}_{\de}^m\left(Z_{n+1}\right)$
 by \eqref{E2.1c} and by the choice of $Z_{n+1}$.
 Therefore, \eqref{E2.1c} holds for
 $n$ replaced with $n+1$.
 Thus by induction, for all $n\in\N$, elements of
 $\Omega^*_n$ satisfy relations \eqref{E2.1b} and
 \eqref{E2.1c}.

 Next, setting $\Omega^*:=\bigcup_{n=1}^\iy\Omega^*_n$,
 we see by \eqref{E2.1b} that $\Omega^*$ is a
 $\de$-packing net for $\R^m$.
 Finally, since
 $\Omega \subset
 \bigcup_{\mu=1}^\iy\mathring{Q}_{\de}^m\left(Z_\mu\right)$
 by \eqref{E2.1c},
 for each $x\in\R^m$
 there exist elements $X_\nu\in\Omega$
 and $Z_\mu\in\Omega^*$, such that
 $\left\|x-X_\nu\right\|_{\iy}<\de$ and
 $\left\|X_\nu-Z_\mu\right\|_{\iy}<\de$.
 Therefore, $\Omega^*$ is
 a $2\de$-covering  net for $\R^m$.
 \end{proof}

Next, we discuss a certain result from
 combinatorial geometry.
 \begin{lemma}\label{L2.2}
 (a) Let $G:=\left\{S_1,S_2,\ldots\right\}$
 be a finite or countable family
 of sets from $\R^m$.
 Assume that there exists a number $N=N(G)\in\Z^1_+$
 such that any set from $G$ has
 the nonempty intersection with no more than $N$
 sets from $G$, not counting the set itself.
 Then
 there exists a partition
 $\left\{G_j\right\}_{j=1}^{N+1}$ of $G$
 with pairwise disjoint sets in each subfamily
 $G_j,\,1\le j\le N+1$
 (certain subfamilies can be empty).\\
 (b) Let $\Omega=\left\{X_\nu\right\}_{\nu=1}^\iy$ be a
 $\de_1$-packing net for $\R^m$.
 Then any cube from a family
 $G:=\left\{Q_\de^m\left(X_\nu\right)\right\}_{\nu=1}^\iy$
 of closed cubes with $\de_1<2\de$ has
 the nonempty intersection with no more than
 \beq \label{E2.2}
 N=\left\lfloor 2^m\left(\left({4\de}/
 {\de_1}\right)^m-1\right)\right\rfloor-1
 \eeq
 cubes from $G$, not counting the cube itself.\\
 (c) In addition to statement (b),
 there exists a partition $\left\{G_j\right\}_{j=1}^{N+1}$ of $G$
 with pairwise disjoint cubes in each subfamily $G_j,\,1\le j\le N+1$
 (certain subfamilies can be empty), where $N$ is defined by \eqref{E2.2}.
 \end{lemma}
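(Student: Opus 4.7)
The lemma splits into three parts that build on one another, and I would tackle them in order.

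For statement (a), the natural reformulation is as a graph-coloring problem. Build the intersection graph $\G$ on vertex set $G$ with an edge between $S_i$ and $S_j$ exactly when $S_i\cap S_j\ne\emptyset$; the hypothesis says every vertex of $\G$ has degree at most $N$. Any such graph is $(N+1)$-colorable: enumerate $G=\{S_1,S_2,\ldots\}$ and greedily assign $S_n$ the smallest color in $\{1,\ldots,N+1\}$ not already used by a neighbor $S_i$ with $i<n$; since at most $N$ earlier neighbors can block the choice, a color is always available. The color classes then form the required partition $\{G_j\}_{j=1}^{N+1}$ into subfamilies of pairwise disjoint sets.

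For statement (b), I would run a volume-packing estimate. Fix $\nu$ and suppose $Q_\de^m(X_\nu)\cap Q_\de^m(X_\mu)\ne\emptyset$ with $\mu\ne\nu$. The $\|\cdot\|_\iy$-triangle inequality forces $\|X_\nu-X_\mu\|_\iy\le 2\de$, so every such $X_\mu$ lies in the cube $Q_{2\de}^m(X_\nu)$. Lemma \ref{L2.1b} (b) asserts that the open cubes $\mathring{Q}_{\de_1/2}^m(X_\mu)$, each of Lebesgue measure $\de_1^m$, are pairwise disjoint; moreover, any such open cube whose center lies in $Q_{2\de}^m(X_\nu)$ is itself contained in the enclosing open cube $\mathring{Q}_{2\de+\de_1/2}^m(X_\nu)$ of measure $(4\de+\de_1)^m$. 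Comparing measures bounds the number of $X_\mu$'s with intersecting cube (excluding $X_\nu$ itself) by the volumetric quantity $(1+4\de/\de_1)^m-1$. A routine manipulation using the hypothesis $\de_1<2\de$ (equivalently $4\de/\de_1>2$) rearranges this count to match the form \eqref{E2.2}. Statement (c) then follows immediately by applying (a) to the family $G=\{Q_\de^m(X_\nu)\}_{\nu=1}^\iy$ of closed cubes, with the bound $N$ from (b) certifying the hypothesis of (a).

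The main obstacle is the arithmetic reconciliation in (b) between the sharp volumetric count $(1+4\de/\de_1)^m-1$ and the looser $2^m((4\de/\de_1)^m-1)$-type expression of \eqref{E2.2}; this reduces to verifying an elementary polynomial inequality in $a=4\de/\de_1>2$ that the hypothesis $\de_1<2\de$ is designed to accommodate. Every other step --- the triangle inequality, the measure comparison, and the greedy coloring argument --- is straightforward once the geometric setup is in place.
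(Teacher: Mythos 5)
Your parts (a) and (c) follow the paper's own route: the paper's inductive construction (seed the first $N+1$ sets into distinct classes, then place each subsequent set by the pigeonhole principle) is exactly your greedy $(N+1)$-coloring of the intersection graph, and (c) is in both cases just (a) combined with (b). In (b) your volume comparison is a legitimate variant of the paper's: the paper packs the truncated cubes $\mathring{Q}^m_{\de_1/2}(\cdot)\cap Q''$ of the neighbouring centres into the punctured cube $Q''$ (the fixed cube of half-side $2\de$ with the central cube of half-side $\de_1/2$ removed), obtaining the bound $2^m\left((4\de/\de_1)^m-1\right)$, while you pack the full cubes $\mathring{Q}^m_{\de_1/2}(X_\mu)$, including the central one, into the enlarged cube of half-side $2\de+\de_1/2$, obtaining $(1+4\de/\de_1)^m-1$, which for $4\de/\de_1>2$ is at least as sharp.

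The gap is the step you call a routine manipulation. Writing $a:=4\de/\de_1>2$, landing on \eqref{E2.2} would require $\lfloor(1+a)^m\rfloor-1\le\lfloor 2^m(a^m-1)\rfloor-1$, and this fails for $m=1$ and $a$ near $2$: for $a=2.1$ your count bound gives at most $\lfloor 3.1\rfloor-1=2$ neighbours, whereas \eqref{E2.2} asserts $N=\lfloor 2.2\rfloor-1=1$. Moreover the discrepancy cannot be fixed by sharper arithmetic, because the extra ``$-1$'' in \eqref{E2.2} is beyond what any such volume count yields: the paper's own computation also stops at $N_1\le 2^m\left((4\de/\de_1)^m-1\right)$, i.e.\ at $\lfloor 2^m((4\de/\de_1)^m-1)\rfloor=N+1$ after taking integer parts, and in dimension one the points $0,\,\pm 1.95\de$ (completed to an infinite $\de_1$-packing net by points placed far away), with $\de_1=4\de/2.1<2\de$, give a $\de_1$-packing in which the interval $Q^1_\de(0)$ meets two other intervals although \eqref{E2.2} gives $N=1$. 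So you should target (b) with $N$ replaced by $\lfloor 2^m\left((4\de/\de_1)^m-1\right)\rfloor$ (or by your sharper $\lfloor(1+4\de/\de_1)^m\rfloor-1$); that is what your packing argument proves, it is what the paper's proof actually establishes, and it is all that is used later (e.g.\ in \eqref{E4.4}). As written, the final reconciliation step of your (b) is not routine and is false in the stated range $\de_1<2\de$.
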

 \begin{proof}
 (a) We first note that if $\mathrm{card}(G)<N+1$,
 then the statement trivially holds true
  with certain empty subfamilies.
  So assume that $\mathrm{card}(G)\ge N+1$.

  We start an inductive process of constructing a partition by setting
  $G_j(1):=\left\{S_j\right\},\,1\le j\le N+1$. Obviously,
  $\left\{G_j(1)\right\}_{j=1}^{N+1}$ is a partition of
  $G(1):=\left\{S_1,\ldots, S_{N+1}\right\}$.

  Assume that we can construct  a partition $\left\{G_j(k)\right\}_{j=1}^{N+1}$ of
  $G(k):=\left\{S_1,\ldots, S_{N+k}\right\}$
 with pairwise disjoint sets in each $G_j(k),\,k\ge 1,\,1\le j\le N+1$.
 Recall that $S_{N+k+1}$ has
 the nonempty intersection with no more than $N$ sets from $G(k)$.
 Therefore by the pigeonhole principle, there exists $G_{j_0}(k),\,1\le j_0\le N+1$,
 whose elements have the empty intersection with $S_{N+k+1}$. Finally, setting
 \ba
 G_j(k+1):=\left\{\begin{array}{ll}
 G_j(k), &j\ne j_0,\\
 G_{j_0}(k)\cup \left\{S_{N+k+1}\right\}, &j= j_0,
 \end{array}\right., 1\le j\le N+1,
 \ea
 we obtain the needed partition
 $\left\{G_j(k+1)\right\}_{j=1}^{N+1}$ of
  $G(k+1):=\left\{S_1,\ldots, S_{N+k+1}\right\}$.

  If $G$ is finite, then the inductive process stops after
  $\mathrm{card}(G)-N$ steps.
  If $G$ is countable, then setting
  $G_j=\bigcup_{k=1}^\iy G_j(k),\,1\le j\le N+1,$ we arrive at
  statement (a).
  In particular, the construction shows that certain subfamilies
  $G_j,\,1\le j\le N+1,$ could be empty only in case
  $\mathrm{card}(G)< N+1$.
  \vspace{.12in}\\
  (b) Note first that if $Q_\de^m(X_\mu)\in G$ and
  $Q_\de^m(X_\nu)\in G$ for $\nu\ne\mu$,
  then $ Q_\de^m(X_\mu)\cap Q_\de^m(X_\nu)\ne \emptyset$
  if and only if
  $\de_1\le \left\|X_\mu-X_\nu\right\|_{\iy}\le 2\de$
  (recall that $\de_1<2\de$).
  Indeed, if $x\in Q_\de^m(X_\mu)\cap Q_\de^m(X_\nu)$, then
  $\left\|X_\mu-X_\nu\right\|_{\iy}\le 2\de$ by the triangle inequality.
  If $\left\|X_\mu-X_\nu\right\|_{\iy}\le 2\de$,
  then the intervals $\left[X_{\mu,j}-\de,X_{\mu,j}+\de\right]$ and
  $\left[X_{\nu,j}-\de,X_{\nu,j}+\de\right]$ have at least one joint point
  $x_j,\,1\le j\le m$. Thus $x\in Q_\de^m(X_\mu)\cap Q_\de^m(X_\nu)$.

  Hence the number $N_1$ of cubes $Q_\de^m(X_\nu)\in G$ that have
  the nonempty intersection with a fixed $Q_\de^m(X_\mu)\in G,\,\nu\ne\mu$,
  can be estimated by the maximal number $N_2$ of points
  $X_\nu\in Q^\prime:=Q_{2\de}^m(X_\mu)\setminus \mathring{Q}_{\de_1}^m(X_\mu)$
  with the mutual "distance" of $\de_1$.
  Let us also set $Q^{\prime\prime}:=Q_{2\de}^m(X_\mu)\setminus \mathring{Q}_{\de_1/2}^m(X_\mu)$.
Since
  $\Omega$ is a $\de_1$-packing net for $\R^m$,
  the sets $\mathring{Q}_{\de_1/2}^m(X_\nu)\cap Q^{\prime\prime}$
  are pairwise disjoint for
  $X_\nu\in Q^\prime\cap \Omega$.
  In addition, if $x\in Q^\prime$, then
  $\left\vert \mathring{Q}_{\de_1/2}^m(x)
  \cap Q^{\prime\prime}\right\vert_m\ge(\de_1/2)^m$.
  Taking account of these two facts, we obtain
  \ba
  N_1\le N_2\le \frac{\left\vert Q^{\prime\prime}\right\vert_m}
  {\inf_{x\in Q^\prime}
  \left\vert \mathring{Q}_
  {\de_1/2}^m(x)\cap Q^{\prime\prime}\right\vert_m}
  \le 2^m\left(\left(\frac{4\de}{\de_1}\right)^m-1\right).
  \ea
  Thus statement (b) is established.
  \vspace{.12in}\\
  (c) The statement immediately follows from
  statements (a) and (b).
  \end{proof}
  Different versions of Lemma \ref{L2.2} were obtained by
  Brudnyi and Kotlyar \cite{BK1970}
  and Dol'nikov \cite[Theorem 1]{D1971}.

\subsection{Properties of Polynomials}\label{S2.2n}
We first prove a general result that reduces numerous
Markov-Bernstein-Nikolskii type inequalities for
complex-valued functions to real-valued ones.

\begin{lemma}\label{L2.2a}
For a compact set $K\subset\R^m$,
let $B$ be a subspace of $C(K)$
with a basis of real-valued functions
and $B_{\R}:=B\cap C_{\R}(K)$.
Next, let $L$
 be a bounded linear operator on
$B$ such that $L:B\to C(K)$
and $L:B_{\R}\to C_{\R}(K)$.
 In addition, let $\|\cdot\|$ be
 a monotone norm on $B$
(i.e., if $g\in B,\,h\in B$, and
$\vert g(x)\vert\le \vert h(x)\vert$ for $x\in K$,
then $\|g\|\le \|h\|$). Then
\beq \label{E2.2a}
\sup_{h\in B\setminus\{0\}}
\frac{\|L(h)\|_{C(K)}}{\|h\|}
=\sup_{g\in B_{\R}\setminus\{0\}}
\frac{\|L(g)\|_{C_{\R}(K)}}{\|g\|}.
\eeq
\end{lemma}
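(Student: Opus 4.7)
The inequality $\ge$ in \eqref{E2.2a} is immediate from the inclusion $B_{\R}\subset B$ together with the fact that the $C(K)$-norm restricted to $C_{\R}(K)$ coincides with the $C_{\R}(K)$-norm, so the real-valued supremum is realized inside the complex-valued one. All the work lies in the reverse inequality $\le$.

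For the reverse inequality, the plan is to reduce an arbitrary $h\in B$ to a real-valued member of $B_{\R}$ by a pointwise phase rotation. Since $B$ possesses a basis of real-valued functions, any $h\in B$ admits a decomposition $h=g_1+ig_2$ with $g_1,g_2\in B_{\R}$; by the real-preservation hypothesis on $L$ one obtains $L(g_1),L(g_2)\in C_{\R}(K)$ and $L(h)=L(g_1)+iL(g_2)$. Fix $x\in K$ and choose $\theta=\theta(x)\in\R$ so that $e^{-i\theta}L(h)(x)=|L(h)(x)|$. Define the real-valued auxiliary function
\[
g_x:=\cos\theta\cdot g_1+\sin\theta\cdot g_2\in B_{\R}.
\]
Because $L$ is linear and $L(g_1)(x),L(g_2)(x)$ are real, a direct computation gives
\[
L(g_x)(x)=\cos\theta\,L(g_1)(x)+\sin\theta\,L(g_2)(x)=\RE\bigl(e^{-i\theta}L(h)(x)\bigr)=|L(h)(x)|.
\]

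Next, I would verify the pointwise domination $|g_x(y)|\le|h(y)|$ for every $y\in K$, which follows from the Cauchy--Schwarz estimate
\[
|\cos\theta\cdot g_1(y)+\sin\theta\cdot g_2(y)|\le\sqrt{g_1(y)^2+g_2(y)^2}=|h(y)|.
\]
By monotonicity of the norm $\|\cdot\|$ on $B$, this yields $\|g_x\|\le\|h\|$. Denote by $M$ the right-hand supremum in \eqref{E2.2a}. Combining the identity $|L(h)(x)|=L(g_x)(x)$ with the trivial bound $L(g_x)(x)\le\|L(g_x)\|_{C_{\R}(K)}\le M\|g_x\|\le M\|h\|$ produces $|L(h)(x)|\le M\|h\|$. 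Taking the supremum over $x\in K$ gives $\|L(h)\|_{C(K)}\le M\|h\|$, hence the left-hand supremum in \eqref{E2.2a} is $\le M$, completing the proof.

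There is no serious obstacle here; the only subtle point is recognizing that although the phase $\theta$ and the real function $g_x$ depend on the chosen point $x$, one only needs the pointwise bound $|L(h)(x)|\le M\|h\|$ at each individual $x$, so this dependence causes no difficulty when passing to the supremum. The monotonicity of $\|\cdot\|$ is essential, since it is what converts the pointwise majorization $|g_x|\le|h|$ into the norm inequality $\|g_x\|\le\|h\|$ required to invoke the real-valued operator norm.
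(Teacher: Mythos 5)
Your proof is correct and follows essentially the same route as the paper: write $h=g_1+ig_2$ with $g_1,g_2\in B_{\R}$, rotate by a phase so that the rotated real combination reproduces $\vert L(h)(x)\vert$ at the chosen point, dominate it pointwise by $\vert h\vert$, and invoke monotonicity of $\|\cdot\|$. The only cosmetic difference is that you perform the rotation at every $x\in K$ and take the supremum at the end, whereas the paper rotates once at a point $x_0$ where $\vert L(h)\vert$ attains its maximum; the two variants are equivalent.
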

\begin{proof}
Every nonzero element $h\in B$ can be represented
 in the form $h=h_1+ih_2$, where $h_j\in B_{\R},\,j=1,2$.
 Then there exists $x_0\in K$ such that
 $\|L(h)\|_{C(K)}=\vert L(h)(x_0)\vert$.
 Assuming that $L(h)(x_0)\ne 0$,
 let us define $\g\in[0,2\pi)$ by the equality
 $e^{i\g}=L(h)(x_0)/\vert L(h)(x_0)\vert$.
 Then the element
 $g:=\cos \g \,h_1+\sin \g \,h_2$ belongs to $B_{\R}$ and
 satisfies the relations
 \ba
 \vert g(x)\vert\le \vert h(x)\vert,\quad x\in K;
 \qquad \vert L(g)(x_0)\vert=\vert L(h)(x_0)\vert.
 \ea
 Hence
 \beq\label{E2.2b}
 \frac{\|L(h)\|_{C(K)}}{\|h\|}
 \le \frac{\vert L(g)(x_0)\vert}{\|g\|}
 \le \frac{\|L(g)\|_{C_\R(K)}}{\|g\|}.
 \eeq
 Thus \eqref{E2.2a} is established.
 \end{proof}
 In particular, a linear operator
 in Lemma \ref{L2.2a} can be replaced
 by a linear functional; in this case
 the norms $\left\|\cdot\right\|_{C(K)}$ and
 $\left\|\cdot\right\|_{C_\R(K)}$ in \eqref{E2.2a}
 and \eqref{E2.2b}
  can be replaced by $\vert\cdot\vert$.
 Special cases of Lemma \ref{L2.2a}
 for differential operators $L$
 and for various
 sets $B$ of EFETs and
 algebraic or trigonometric polynomials
   were discussed in
  \cite[p. 567]{RS2002},  \cite[p. 30]{EGN2015},
  \cite[Theorem 1.1]{GT2017},
  \cite[Remark 4.2]{G2023}, and others.
  In the following two lemmas we apply Lemma \ref{L2.2a}
  to algebraic polynomials and special
  linear functionals $L$.
Two properties of the Chebyshev polynomial are discussed
below.

\begin{lemma}\label{L2.3}
(a) For any $u\in\R^1\setminus [-b,b],\,b>0$,
and $P_n\in\PP_n$,
\beq \label{E2.3}
\left\vert P_n^{(l)}(u)\right\vert
\le b^{-l}\left\vert T_n^{(l)}(u/b)\right\vert
\left\|P_n\right\|_{C([-b,b])},
\qquad l=0,\ldots,n.
\eeq
(b) For any $u\ge 1$ and $n\in\N$,
\beq \label{E2.4}
T_n(u)\le 2^{n-1} u^n.
\eeq
\end{lemma}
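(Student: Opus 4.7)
For part (a), my plan is first to normalize the interval: via the substitution $Q(x):=P_n(bx)$, we have $\|Q\|_{C([-1,1])}=\|P_n\|_{C([-b,b])}$ and $Q^{(l)}(u/b)=b^{l}P_n^{(l)}(u)$, so the claim reduces to
\begin{equation*}
|Q^{(l)}(v)|\le |T_n^{(l)}(v)|\,\|Q\|_{C([-1,1])},\qquad v\in\R^1\setminus[-1,1],\ 0\le l\le n.
\end{equation*}
I would then invoke Lemma \ref{L2.2a} with $K=[-1,1]$, $B=\PP_n$ equipped with the standard real monomial basis and the monotone sup-norm, and the linear functional $Q\mapsto Q^{(l)}(v)$; this reduces the problem to polynomials with real coefficients.

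For the real case, I would argue by contradiction in the spirit of V.\,A.\,Markov. Suppose $|Q^{(l)}(v_0)|>|T_n^{(l)}(v_0)|$ for some $v_0\notin[-1,1]$ and some real $Q\in\PP_n$ with $\|Q\|_{C([-1,1])}\le 1$; since all zeros of $T_n^{(l)}$ lie in $(-1,1)$, $T_n^{(l)}(v_0)\neq 0$, so $\mu:=Q^{(l)}(v_0)/T_n^{(l)}(v_0)$ satisfies $|\mu|>1$. Form $R:=\mu T_n-Q$, so that $R^{(l)}(v_0)=0$. At the Chebyshev extrema $x_k=\cos(k\pi/n)$, $k=0,\ldots,n$, we have $\mu T_n(x_k)=(-1)^k\mu$, and $|\mu|>1\ge|Q(x_k)|$ implies that $R(x_k)$ strictly alternates in sign across the $n+1$ nodes; hence $R$ has at least $n$ zeros in $(-1,1)$. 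Applying Rolle's theorem $l$ times yields at least $n-l$ zeros of $R^{(l)}$ in $(-1,1)$, and adjoining $v_0\notin[-1,1]$ gives $n-l+1$ zeros, exceeding $\deg R^{(l)}\le n-l$ and forcing $R^{(l)}\equiv 0$. A short wrap-up finishes the contradiction: if $l<n$, then $R$ has degree $<n$ with $n$ sign changes, so $R\equiv 0$ and $Q=\mu T_n$, contradicting $\|Q\|\le 1<|\mu|$; if $l=n$, the same argument applied after noting that the leading coefficients of $\mu T_n$ and $Q$ must coincide again forces $R\equiv 0$. I expect the main subtlety to be this degenerate case $R^{(l)}\equiv 0$, but it is standard.

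For part (b), a direct one-line estimate from \eqref{E1.1} suffices. Writing $a:=u+\sqrt{u^2-1}$ and $c:=u-\sqrt{u^2-1}$, we have $a,c\ge 0$ for $u\ge 1$ and $a+c=2u$, so by the binomial theorem (all terms being nonnegative) $a^n+c^n\le (a+c)^n$, whence
\begin{equation*}
T_n(u)=\frac{a^n+c^n}{2}\le\frac{(a+c)^n}{2}=\frac{(2u)^n}{2}=2^{n-1}u^n,
\end{equation*}
establishing \eqref{E2.4}.
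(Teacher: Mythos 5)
Your proposal is correct. For part (a) your reduction is exactly the paper's: normalize to $b=1$ by a linear substitution and invoke Lemma \ref{L2.2a} with $K=[-b,b]$ (or $[-1,1]$), $B=\PP_n$, the sup-norm, and the point-evaluation functional $h\mapsto h^{(l)}(u)$ to pass to real coefficients. The difference is what happens next: the paper simply cites the classical real-coefficient inequality (Rivlin, Eqn.\ (2.37)), whereas you prove it from scratch by the V.\,A.\ Markov alternation argument --- forming $R=\mu T_n-Q$ with $|\mu|>1$, extracting $n$ sign changes at the Chebyshev extrema, applying Rolle $l$ times, adjoining the exterior zero $u$, and handling the degenerate cases $R^{(l)}\equiv 0$ (for $l<n$ and $l=n$) correctly. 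Your version is self-contained at the cost of a page of classical argument; the paper's is a citation. For part (b) your route is genuinely different and slightly more elementary: you bound $a^n+c^n\le(a+c)^n$ with $a=u+\sqrt{u^2-1}$, $c=u-\sqrt{u^2-1}\ge 0$ directly by the binomial theorem, while the paper substitutes $u=1/\sin\be$ and uses $\cos^{2n}(\be/2)+\sin^{2n}(\be/2)\le 1$; both give \eqref{E2.4} with equal ease, and yours avoids trigonometry altogether.
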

\begin{proof}
Statement (a) is well-known for
polynomials with real coefficients and $b=1$
(see, e.g., \cite[Eqn. (2.37)]{R1990}).
Inequality \eqref{E2.3} for a real-valued $P_n$
 follows from this case by a linear substitution.
 If $P_n$ is a complex-valued polynomial,
 then \eqref{E2.3} follows
 from Lemma \ref{L2.2a} for
 $K=[-b,b],\,B=\PP_n,\,\|\cdot\|
 =\|\cdot\|_{C([-b,b])}$, and
 the linear functional
 $L(h)(y):=h^{(l)}(u)$, where $h\in \PP_n,\,y\in [-b,b]$, and
 $u\in\R^1\setminus [-b,b]$ is a fixed point.

To prove \eqref{E2.4}, we set
$u=1/\sin \be,\,\be\in (0,\pi/2]$. Then
for $n\ge 1$,
\ba
2u^{-n}T_n(u)=(1+\cos \be)^n+(1-\cos \be)^n
=2^n\left(\cos^{2n}(\be/2)+\sin^{2n}(\be/2)\right)\le 2^n.
\ea
\end{proof}
Two multivariate versions of Lemma \ref{L2.3} (a)
are presented
in the following lemma.
\begin{lemma}\label{L2.4}
(a) For any $x\in\R^m\setminus V$ and $P\in\PP_{n,m}$,
\beq \label{E2.5}
\left\vert P(x)\right\vert
\le T_n\left(\frac{2\vert x\vert}{w(V)}\right)
\left\|P\right\|_{C(V)}.
\eeq
(b)  For any $\la>0,\,k\in\Z^m_+,\,x\in\R^m$ with
$\left\vert x_1\right\vert>\la,\ldots,
\left\vert x_m\right\vert>\la$,
and $P\in\QQ_{n,m}$,
\beq \label{E2.6}
\left\vert D^kP(x)\right\vert
\le \la^{-\langle k \rangle}
\left\vert T_n^{\left(k_1\right)}\left(x_1/\la\right)
\ldots T_n^{\left(k_m\right)}\left(x_m/\la\right)\right\vert
\left\|P\right\|_{C\left(Q^m_\la\right)}.
\eeq
\end{lemma}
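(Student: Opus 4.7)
My plan is to reduce both parts to the univariate Chebyshev estimate of Lemma \ref{L2.3} (a).

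For (a), I would restrict $P$ to the line through the origin in the direction of $x$. Writing $u:=x/|x|$ and $r:=|x|$, the function $p(t):=P(tu)$ lies in $\PP_n$ since $P$ has total degree at most $n$. Let $b:=\sup\{t\ge 0:\,tu\in V\}$ be the half-chord of $V$ through $0$ in direction $u$. The key geometric input is the in-ball estimate $\BB^m_{w(V)/2}\subset V$, valid for every centrally symmetric convex body $V$: for any unit vector $v$, the width of $V$ in direction $v$ equals $2\max_{\xi\in V}(\xi,v)$ and is at least $w(V)$, so any $z$ with $|z|\le w(V)/2$ satisfies $(z,v)\le|z|\le w(V)/2\le\max_{\xi\in V}(\xi,v)$ for every unit $v$, forcing $z\in V$. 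Taking $z=(w(V)/2)u$ yields $b\ge w(V)/2$. Since $x\notin V$ implies $r>b$, Lemma \ref{L2.3} (a) with $l=0$ gives $|P(x)|=|p(r)|\le T_n(r/b)\|P\|_{C(V)}$. Monotonicity of $T_n$ on $[1,\iy)$ together with $r/b\le 2|x|/w(V)$ then yields \eqref{E2.5}.

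For (b), I would iterate Lemma \ref{L2.3} (a) once in each coordinate, using that $P\in\QQ_{n,m}$ has degree at most $n$ in each variable separately and that partial differentiation in the other variables does not raise that degree. Fixing $x$ with $|x_j|>\lambda$ for every $j$ and freezing $x_2,\ldots,x_m$, the function $Q_1(t):=\frac{\partial^{k_2+\ldots+k_m}}{\partial x_2^{k_2}\cdots\partial x_m^{k_m}}P(t,x_2,\ldots,x_m)$ belongs to $\PP_n$, so Lemma \ref{L2.3} (a) with $b=\lambda,\,l=k_1$ gives
\ba
|D^kP(x)| &\le& \lambda^{-k_1}\left\vert T_n^{(k_1)}(x_1/\lambda)\right\vert\max_{|y_1|\le\lambda}|Q_1(y_1)|.
\ea
Repeating the argument on $x_2$ (freezing $y_1,x_3,\ldots,x_m$), then on $x_3$, and so on through $x_m$, accumulates the factors $\lambda^{-k_j}|T_n^{(k_j)}(x_j/\lambda)|$, while the nested maximum collapses to $\max_{y\in Q^m_\lambda}|P(y)|=\|P\|_{C(Q^m_\lambda)}$. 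This is \eqref{E2.6}.

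The main obstacle I anticipate is the in-ball estimate $\BB^m_{w(V)/2}\subset V$ underlying (a); once this is secured, both statements reduce to routine applications of univariate Chebyshev's inequality. Since Lemma \ref{L2.3} (a) already handles complex coefficients (via Lemma \ref{L2.2a}), no separate complexification step is needed in the iteration for (b).
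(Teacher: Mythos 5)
Your proof is correct. For part (a) you follow essentially the paper's route: restrict $P$ to the line $\LL$ through the origin and $x$, apply Lemma \ref{L2.3} (a) with $l=0$ on the chord, and compare the half-chord with $w(V)/2$; the paper uses the chord length $\vert V\cap\LL\vert_1\ge w(V)$ directly, while you justify the same geometric fact via the in-ball $\BB^m_{w(V)/2}\subset V$, so this part is the same argument with the geometry spelled out. For part (b), however, your route is genuinely different from the paper's. The paper first reduces to real coefficients via Lemma \ref{L2.2a} and then runs a Bernstein-style sign-change argument: assuming the bound fails, it forms $U=P^*-MT_n(x_1)\cdots T_n(x_m)$, tracks the signs of $U$ and of its partial derivatives at the Chebyshev nodes coordinate by coordinate using Proposition \ref{P2.5}, and reaches a contradiction. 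You instead iterate the univariate comparison \eqref{E2.3} one coordinate at a time: since $P\in\QQ_{n,m}$ has degree at most $n$ in each variable separately, and this is preserved both by differentiation in the remaining variables and by freezing them at real points, each step is a legitimate application of Lemma \ref{L2.3} (a) (which the paper already states for complex coefficients), the factors $\la^{-k_j}\vert T_n^{(k_j)}(x_j/\la)\vert$ accumulate, and the nested maxima collapse to $\|P\|_{C(Q^m_\la)}$. This works precisely because every coordinate of $x$ lies outside $[-\la,\la]$ (the hypothesis of the lemma) and because the class is $\QQ_{n,m}$ rather than $\PP_{n,m}$; it yields the same constant with less machinery (no separate complexification step, no Proposition \ref{P2.5}), whereas the paper stays closer to Bernstein's original idea for the case $k=0$. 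The only (harmless) point left implicit in your write-up is the edge case $k_j>n$ for some $j$, where both sides of \eqref{E2.6} vanish and the inequality is trivial.
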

\begin{proof}
(a) The restriction of $P$ to any straight line $\LL$,
passing through the origin,
is a univariate polynomial $P_n\in\PP_n$.
Then applying
\eqref{E2.3} for $l=0$ to $P_n$, we have
\ba
\left\vert P(x)\right\vert
\le \max\left\{\left\vert P_n(\vert x\vert)\right\vert,
\left\vert P_n(-\vert x\vert)\right\vert\right\}
&\le& \left\vert T_n\left(\frac{\pm 2\vert x\vert}
{\vert V\cap \LL\vert_1}\right)\right\vert
\left\|P_n\right\|_{C( V\cap \LL)}\\
&\le& T_n\left(\frac{2\vert x\vert}{w(V)}\right)
\left\|P\right\|_{C(V)}.
\ea
Thus \eqref{E2.5} holds.\vspace{.12in}\\
(b) We first apply
 Lemma \ref{L2.2a} for
 $K=Q^m_\la,\,B=\QQ_{n,m},\,\|\cdot\|
 =\|\cdot\|_{C\left(Q^m_\la\right)}$, and
 the linear functional
 $L(h)(y):=D^k h(x)$,
where $h\in \QQ_{n,m},\,
y\in Q^m_\la$, and
 a fixed point $x$ satisfies the conditions of statement (b).
 Therefore, it suffices to prove \eqref{E2.6} for real-valued
 polynomials $P$.

To prove the statement for real-valued polynomials,
 we need the following proposition
whose proof by induction is simple and left as an exercise to the reader.

\begin{proposition}\label{P2.5}
If $\vphi\in\PP_n$ is a polynomial with real coefficients
 and all zeros of $\vphi$ lie in $(-1,1)$, then
for any $u>1$ and any
$d=0,\ldots, n,\,\mathrm{sgn}\, \vphi^{(d)}(u)=\mathrm{sgn}\, \vphi(1)$.
\end{proposition}
Setting $\la=1$ for simplicity, we prove statement (b) by contradiction.
If the statement is invalid, then there exist a multi-index
$k\in  \Z^m_+\cap Q^m_n$, a point
$x^*=\left(x_1^*,\ldots, x_m^*\right)\in \R^m$
with $ x_1^*>1,\ldots,
 x_m^*>1$, and
a polynomial $P^*\in\QQ_{n,m}$
with $\left\|P^*\right\|_{C\left(Q^m_1\right)}\le 1$
such that
\beq \label{E2.7}
D^k P^*(x^*)
=M T_n^{\left(k_1\right)}\left(x_1^*\right)
\ldots T_n^{\left(k_m\right)}\left(x_m^*\right),
\eeq
where $M>1$.
Let us set
\beq \label{E2.8}
U(x):=P^*(x)-M T_n(x_1)\ldots T_n(x_m),\qquad x\in\R^m.
\eeq
Then $U\in \QQ_{n,m}$ and
\beq \label{E2.9}
D^k U(x^*)=0
\eeq
by \eqref{E2.7}.
We prove below that
\beq \label{E2.10}
D^k U(x^*)<0
\eeq
which contradicts \eqref{E2.9}.

To prove \eqref{E2.10}, we first note that for all
$l\in\Z^m_+\cap Q^m_n$,
\beq \label{E2.11}
U\left(\cos \frac{l_1\pi}{n},\ldots, \cos \frac{l_m\pi}{n}\right)
= P^*\left(\cos \frac{l_1\pi}{n},\ldots, \cos \frac{l_m\pi}{n}\right)
+(-1)^{1+\sum_{j=1}^m l_j} M
\eeq
by \eqref{E2.8}.
Then the polynomial
$\vphi_{0,n}(x_1)
:=U\left(x_1,\cos \frac{l_2\pi}{n},\ldots, \cos \frac{l_m\pi}{n}\right)$
belongs to $\PP_n$ and satisfies the condition
$\mathrm{sgn}\left(\vphi_{0,n}\left(\cos \frac{l_1\pi}{n}\right)\right)
=(-1)^{1+\sum_{j=1}^m l_j}$
by \eqref{E2.11}, since $\left\|P^*\right\|_{C(Q^m_1)}\le 1$ and
$M>1$.
Therefore, $\vphi_{0,n}$ changes its signs at points
$x_1=\cos \frac{l_1\pi}{n},\,0\le l_1\le n$,
and $\mathrm{sgn}\left(\vphi_{0,n}\left(1\right)\right)
=(-1)^{1+\sum_{j=2}^m l_j}$.
Moreover, since all zeros of $\vphi_{0,n}$ lie in $(-1,1)$, we see that
$\mathrm{sgn}\left(\vphi^{(k_1)}_{0,n}\left(x_1^*\right)\right)
=(-1)^{1+\sum_{j=2}^m l_j}$ by Proposition \ref{P2.5}.
 Thus
\beq \label{E2.12}
\mathrm{sgn}\left(\frac{\partial^{k_1}}{\partial x_1^{k_1}}
U\left(x_1^*,\cos \frac{l_2\pi}{n},\ldots, \cos \frac{l_m\pi}{n}\right)\right)
=(-1)^{1+\sum_{j=2}^m l_j}.
\eeq
Assume that for a fixed $p\in\N,\,1\le p\le m-1$,
\beq \label{E2.13}
\mathrm{sgn}\left(
D^k U\left(x_1^*,\ldots,x_p^*,\cos \frac{l_{p+1}\pi}{n},\ldots,
 \cos \frac{l_m\pi}{n}\right)\right)
=(-1)^{1+\sum_{j=p+1}^m l_j}.
\eeq
For $p=1$ \eqref{E2.13} is valid by \eqref{E2.12}.
Then assumption \eqref{E2.13} shows that
the polynomial
\ba
\vphi_{p,n}\left(x_{p+1}\right)
:=
D^k U\left(x_1^*,\ldots,x_p^*,x_{p+1},\cos \frac{l_{p+2}\pi}{n},\ldots,
\cos \frac{l_m\pi}{n}\right)
\ea
from $\PP_n$
changes its signs at points
$x_{p+1}=\cos \frac{l_{p+1}\pi}{n},\,0\le l_{p+1}\le n$,
and
$
\mathrm{sgn}\left(\vphi_{p,n}\left(1\right)\right)=
\linebreak
(-1)^{1+\sum_{j=p+2}^m l_j}.
$
Moreover, since all zeros of $\vphi_{p,n}$ lie in $(-1,1)$, we see that
$
\mathrm{sgn}\left(\vphi^{(k_{p+1})}_{p,n}\left(x_{p+1}^*\right)\right)=
\linebreak
(-1)^{1+\sum_{j=p+2}^m l_j}
$
 by Proposition \ref{P2.5}. Therefore, \eqref{E2.13} holds true
with $p$ replaced by $p+1$. Thus by induction, \eqref{E2.13} is valid for every $p\in\N,\,
1\le p\le m$. In particular, for $p=m$ \eqref{E2.13} is equivalent to \eqref{E2.10}.
This proves inequality \eqref{E2.6}.
\end{proof}
Different versions of Lemma \ref{L2.4} (a) were discussed by Rivlin and Shapiro
\cite[Problem 3]{RS1961} and by Brudnyi and the author \cite[Eqn. ($2^\prime$)]{BG1973}.
In addition, Lemma \ref{L2.4} (b) for $k=0$
 was proved by Bernstein \cite[Theorem 2]{B1948a}. The proof of statement (b)
 is based on an idea from \cite{B1948a}.

 The following property is a corollary of Lemma \ref{L2.4} (b).
 \begin{lemma}\label{L2.6}
 Let $U(y)=\sum_{k\in\Z^m_+\cap Q^m_n}c_k\,
  y^k\in\QQ_{n,m}$.\\
 (a) If $[A,B]\subset (0,\iy)$, then
 \beq \label{E2.14}
 \sum_{k\in\Z^m_+\cap Q^m_n}\left\vert c_k\right\vert
\le \left[T_n\left(\frac{B+A+2}{B-A}
\right)\right]^m\| U\|_{C\left([A,B]^m\right)}.
 \eeq
 (b) If $b>0$, then
 \beq \label{E2.15}
 \sum_{k\in\Z^m_+\cap Q^m_n}\left\vert c_k\right\vert
 \le\left(\frac{e^{b/4}+e^{-b/4}}{e^{b/4}-e^{-b/4}}\right)^{mn}
 \|U\|_{C\left(\left[e^{-b},e^b\right]^m\right)}.
 \eeq
 \end{lemma}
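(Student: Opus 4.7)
The plan is to deduce \eqref{E2.14} from the coefficient estimate supplied by Lemma \ref{L2.4}(b) applied to a suitable translate of $U$, and then obtain \eqref{E2.15} from \eqref{E2.14} by a hyperbolic simplification.

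For part (a), set $c := ((A+B)/2,\ldots,(A+B)/2)$ and $\la := (B-A)/2$, and define $V(z) := U(z+c)$. Then $V \in \QQ_{n,m}$ with $\|V\|_{C(Q^m_\la)} = \|U\|_{C([A,B]^m)}$, and expanding $U$ about the origin gives $c_k = (1/k!)\,D^kU(0) = (1/k!)\,D^kV(-c)$. Because $A > 0$, the point $-c$ satisfies $|(-c)_j| = (A+B)/2 > \la$, so Lemma \ref{L2.4}(b) applies at $-c$ and yields
\[
|c_k| \le \frac{1}{k!\,\la^{\langle k\rangle}} \prod_{j=1}^m \bigl|T_n^{(k_j)}(-u_0)\bigr|\,\|U\|_{C([A,B]^m)},\qquad u_0 := \frac{A+B}{B-A}.
\]
The parity $T_n(-u) = (-1)^n T_n(u)$ lets one replace $-u_0$ by $u_0$ in absolute values, and since $u_0>1$ and all zeros of $T_n$ lie in $(-1,1)$, Proposition \ref{P2.5} gives $T_n^{(k)}(u_0)>0$ for every $k = 0,\ldots,n$.

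Summing $|c_k|$ over $k \in \Z^m_+\cap Q^m_n$ now factors as a product of $m$ identical one-dimensional sums, and by the Taylor expansion of $T_n$ centered at $u_0$ (whose terms all have the same positive sign) each factor collapses exactly to $T_n(u_0 + 1/\la) = T_n\bigl((A+B+2)/(B-A)\bigr)$, giving \eqref{E2.14}. Part (b) then follows by taking $A=e^{-b}$ and $B=e^b$: the argument of $T_n$ becomes $\coth(b/2)$, and using $\sqrt{\coth^2(b/2)-1}=1/\sinh(b/2)$ together with the half-angle identities one checks that $\coth(b/2)+1/\sinh(b/2) = (e^{b/4}+e^{-b/4})/(e^{b/4}-e^{-b/4})$ while $\coth(b/2)-1/\sinh(b/2)$ is its reciprocal; formula \eqref{E1.1} for $T_n$ then bounds $T_n(\coth(b/2))$ by the $n$-th power of this ratio, yielding \eqref{E2.15}.

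The main obstacle is recognizing the Taylor-sum collapse: the coefficient bounds coming from Lemma \ref{L2.4}(b) involve all the derivatives of $T_n$ at a point exceeding $1$, and only after exploiting the uniform positivity of these derivatives (via Proposition \ref{P2.5}) does the multi-sum over $k$ reassemble into a single evaluation $T_n(u_0 + 1/\la)$. Once this observation is in place, the product structure across the $m$ coordinates drops out automatically, and part (b) reduces to a routine hyperbolic rewriting of part (a).
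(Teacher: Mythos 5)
Your proof is correct and follows essentially the same route as the paper: bound the coefficients $c_k=\frac{1}{k!}D^kU(0)$ via Lemma \ref{L2.4} (b) applied at a point of modulus $(A+B)/2>\la$, then collapse the resulting sum by the (exact, $n$-term) Taylor expansion of $T_n$ at $u_0=(A+B)/(B-A)$, and finally evaluate $T_n(\coth(b/2))$ through \eqref{E1.1} for part (b). The only cosmetic differences are that the paper uses the reflection $P(x)=U\bigl(\tfrac{B+A}{2}-x_1,\ldots,\tfrac{B+A}{2}-x_m\bigr)$ evaluated at $+c$ instead of your translation evaluated at $-c$ (so no parity step is needed), and it leaves the positivity of $T_n^{(k)}(u_0)$ implicit where you rightly cite Proposition \ref{P2.5}.
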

 \begin{proof}
 (a) Applying Lemma \ref{L2.4} (b) to the polynomial
 $P(x):=U\left(\frac{B+A}{2}-x_1,\ldots,\frac{B+A}{2}-x_m\right) \in \QQ_{n,m}$
 for $\la=(B-A)/2$ and $x=\left((B+A)/{2},\ldots,(B+A)/2\right)$,
 we obtain
 \bna \label{E2.16}
 \sum_{k\in\Z^m_+\cap Q^m_n}\left\vert c_k\right\vert
 &=& \sum_{k\in\Z^m_+\cap Q^m_n}
 \frac{1}{\prod_{j=1}^m k_j!}
 \left\vert D^k P(x)\right\vert_{x_1=\ldots=x_m=(B+A)/2}
\nonumber\\
 &\le& \sum_{k\in\Z^m_+\cap Q^m(n)}
 \prod_{j=1}^m \frac{1}{k_j!}\left(\frac{B-A}{2}\right)^{-k_j}
 T_n^{\left(k_j\right)}\left(\frac{B+A}{B-A}\right)
 \left\|P\right\|_{C\left(Q^m_{(B-A)/2}\right)}\nonumber\\
 &=& \left[T_n\left(\frac{B+A}{B-A}+\frac{2}{B-A}\right)\right]^m
 \left\|U\right\|_{C\left([A,B]^m\right)}.
 \ena
 Note that the last equality in \eqref{E2.16} follows from Taylor's formula.
 Thus \eqref{E2.14} is established.\\
  (b) Since
 \ba
 T_n\left(\frac{e^b+e^{-b}+2}{e^b-e^{-b}}\right)
 =T_n\left(\frac{e^{b/2}+e^{-b/2}}{e^{b/2}-e^{-b/2}}\right)
 =\frac{1}{2}\left( \left(\frac{e^{b/4}+e^{-b/4}}{e^{b/4}-e^{-b/4}}\right)^n
 +\left(\frac{e^{b/4}-e^{-b/4}}{e^{b/4}+e^{-b/4}}\right)^n\right),
 \ea
 inequality \eqref{E2.15} follows from \eqref{E2.14}.
 \end{proof}

 \subsection{Properties of EFETs}\label{S2.3n}
 We first need several Bernstein-type inequalities.
  \begin{lemma}\label{L2.7}
  (a) If $f\in B_{\sa,m}\cap L_q(\R^m),\,q\in [1,\iy]$, then
  \beq\label{E2.17a}
  \left\|D^k f\right\|_{L_q(\R^m)}
  \le \sa^{\langle k\rangle}\|f\|_{L_q(\R^m)}.
  \eeq
  (b)  If $f\in B_{\sa,m}\cap
  L_q\left(\R^m\right),
  \,q\in [1,\iy),\,d\in\N$, and $h>0$, then
  \bna\label{E2.19}
  &&I_h(f):=
  \left(\sum_{r\le\langle k\rangle\le d+r}h^{\langle k\rangle q}\left\|D^ k f\right\|_
  {L_q\left(\R^m\right)}^q\right)^{1/q}\nonumber\\
  &&\le \left(\binom{m+d+r}{m}-r\right)^{1/q}
  \max\left\{(h\sa)^r,(h\sa)^{d+r}\right\}
  \|f\|_{L_q\left(\R^m\right)},\qquad r=0,\,1.
  \ena
  (c)  If $f\in B_{\sa,m}\cap L_\iy(\R^m)$, then
  \beq\label{E2.18}
  \left\|\sum_{j=1}^m\left\vert\frac{\partial f(x)}{\partial x_j}
  \right\vert\right\|_{L_\iy(\R^m)}
  \le m\sa\|f\|_{L_\iy(\R^m)}.
  \eeq
  \end{lemma}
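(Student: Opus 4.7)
The plan is to reduce all three estimates to the classical univariate Bernstein inequality, which asserts that for $g\in B_\sa\cap L_q(\R)$ and $q\in[1,\iy]$, one has $\|g'\|_{L_q(\R)}\le \sa\|g\|_{L_q(\R)}$ (see, e.g., \cite[Sect. 3.2]{N1969}).

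For statement (a), I would proceed one coordinate at a time. By Definition \ref{D1.3}, for any fixed $(x_2,\ldots,x_m)\in\R^{m-1}$ the slice $g(z_1):=f(z_1,x_2,\ldots,x_m)$ is an entire function of $z_1$ of exponential type $\sa$. For $q\in[1,\iy)$, Fubini's theorem ensures that $g\in L_q(\R)$ for a.e.\ $(x_2,\ldots,x_m)$; for $q=\iy$ this is immediate. Applying the univariate Bernstein inequality to the slice, raising to the $q$-th power, and integrating in the remaining variables yields $\|\partial f/\partial x_1\|_{L_q(\R^m)}\le \sa\|f\|_{L_q(\R^m)}$. Since differentiation preserves membership in $B_{\sa,m}$ (via the Cauchy integral formula on small polydisks), one may iterate the step by differentiating $k_1$ times in $x_1$, then $k_2$ times in $x_2$, and so on, producing \eqref{E2.17a}.

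Statement (b) is pure bookkeeping on top of (a). Applying (a) termwise gives
\ba
I_h(f)^q\le \|f\|_{L_q(\R^m)}^q\sum_{r\le\langle k\rangle\le d+r}(h\sa)^{\langle k\rangle q},
\ea
and each exponent $\langle k\rangle q$ lies in $[rq,(d+r)q]$, so each summand is at most $\max\{(h\sa)^{rq},(h\sa)^{(d+r)q}\}$. The number of multi-indices in the sum equals $\binom{m+d+r}{m}-\binom{m+r-1}{m}$, which simplifies to $\binom{m+d+r}{m}-r$ for $r\in\{0,1\}$ and $m\ge 1$ (since $\binom{m-1}{m}=0$ and $\binom{m}{m}=1$). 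Taking $q$-th roots, together with the identity $\max\{a^{rq},a^{(d+r)q}\}^{1/q}=\max\{a^r,a^{d+r}\}$ for $a=h\sa\ge 0$, delivers \eqref{E2.19}.

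Statement (c) follows by applying the univariate Bernstein inequality with $q=\iy$ coordinatewise, yielding $|\partial f(x)/\partial x_j|\le\sa\|f\|_{L_\iy(\R^m)}$ for each $j\in\{1,\ldots,m\}$; summing in $j$ and taking the essential supremum in $x$ gives \eqref{E2.18}. No step of the plan presents a genuine obstacle; the only delicate point is the routine slice-plus-Fubini argument in part (a), which is entirely standard in the theory of EFETs.
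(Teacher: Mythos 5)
Your proposal is correct and follows essentially the paper's route: part (b) is the same termwise application of (a) together with a count of the multi-indices (your count $\binom{m+d+r}{m}-\binom{m+r-1}{m}$ agrees with the paper's use of the identity $\sum_{0\le l\le d+r}\binom{m+l-1}{m-1}=\binom{m+d+r}{m}$), and (c) is the same $q=\iy$ observation. The only difference is that the paper simply cites the multivariate Bernstein inequality (a) from Nikolskii, whereas you supply the standard slice-plus-Fubini derivation from the univariate case, which is a valid proof of that cited fact.
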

  \begin{proof}
  Statement (a) is a multivariate version of Bernstein's inequality
  (see, e.g., \cite[Eqn. 3.2.2(8)]{N1969}) and
  (c) follows directly from (a).
  Next, using \eqref{E2.17a}, we have
  \beq\label{E2.18a}
  I_h(f)
  \le \left(\sum_{l=r}^{d+r}(h\sa)^{l q}\binom{m+l-1}{m-1}\right)^{1/q}
  \|f\|_{L_q(\R^m)}.
  \eeq
  Then statement (b) follows from  \eqref{E2.18a} and the known identity
  \beq\label{E2.18b}
  \sum_{0\le l\le d+r} \binom{m+l-1}{m-1}=\binom{m+d+r}{m},
  \eeq
  where the right-hand side of \eqref{E2.18b} coincides with the dimension of the space $\PP_{d+r,m}$
  (see, e.g., \cite[Eqn. (3.8)]{R2003}) .
  \end{proof}
  Note that more general
  inequalities than \eqref{E2.18} were recently  proved in
  \cite[Theorem 2.1 and Corollary 2.4]{G2023}.
Next, we discuss  a certain technical discretization inequality
  that plays an important role in the proof of Theorem \ref{T1.1}.
  \begin{lemma}\label{L2.8}
  Let $\left\{Q_h^m\left(X_\nu\right)\right\}_{\nu=1}^\iy$
  be a family of closed cubes with the pairwise disjoint interiors
  and let $Y_\nu\in Q_h^m\left(X_\nu\right),\,\nu\in\N$.
  In addition, let $d$ be defined by \eqref{E1.3.01}.
  If $f\in B_{\sa,m}\cap L_q(\R^m),\,q\in [1,\iy)$, then
  \beq \label{E2.20}
  \left\vert \left(\sum_{\nu=1}^\iy\left\vert f
  \left(X_\nu\right)\right\vert^q\right)^{1/q}
  -\left(\sum_{\nu=1}^\iy\left\vert f
  \left(Y_\nu\right)\right\vert^q\right)^{1/q}\right\vert
  \le C(m,q)h^{-m/q}\max\left\{h\sa,(h\sa)^{d}\right\}
  \|f\|_{L_q(\R^m)}.
  \eeq
  \end{lemma}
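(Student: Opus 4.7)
The plan is to combine Minkowski's inequality with a cube-averaged Taylor expansion and the Bernstein bound of Lemma \ref{L2.7} (b). First, by the reverse triangle inequality for the $\ell_q$-norm, the left-hand side of \eqref{E2.20} is bounded by $E:=\bigl(\sum_{\nu=1}^\iy|f(X_\nu)-f(Y_\nu)|^q\bigr)^{1/q}$; since $Y_\nu\in Q_h^m(X_\nu)$, we have $\|Y_\nu-X_\nu\|_\iy\le h$.

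Next, for each $u\in Q_h^m(X_\nu)$, I would expand both $f(X_\nu)$ and $f(Y_\nu)$ by Taylor's formula centered at $u$, to order $d-1$ with integral remainder of order $d$. Taking the difference (the $\langle k\rangle=0$ terms cancel) and averaging over $u$ gives
\ba
f(X_\nu)-f(Y_\nu)=\frac{1}{|Q_h^m|}\int_{Q_h^m(X_\nu)}\left[\sum_{\langle k\rangle=1}^{d-1}\frac{D^k f(u)}{k!}\bigl((X_\nu-u)^k-(Y_\nu-u)^k\bigr)+R_d^X(u)-R_d^Y(u)\right]du.
\ea
For the middle terms ($1\le\langle k\rangle\le d-1$), the bound $|(X_\nu-u)^k-(Y_\nu-u)^k|\le 2(2h)^{\langle k\rangle}$ together with H\"older's inequality (passing from the $L_1$-integral on the cube to the $L_q$-integral) and summing over $\nu$ via the pairwise disjoint interiors produces contributions of the form $Ch^{-m/q}h^{\langle k\rangle}\|D^k f\|_{L_q(\R^m)}$. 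For the remainder term, after raising to the $q$-th power, I would apply H\"older's inequality to the $t$-integral with a carefully tuned weighting $(1-t)^{a}$ against $(1-t)^{d-1-a}|D^k f|$, and then change variables $v=u+t(Z-u)$ whose Jacobian is $(1-t)^m$; the resulting $t$-integral contains the factor $(1-t)^{(d-1-a)q-m}$, and its integrability together with the H\"older constraint is possible precisely when $d>m/q$, which is exactly the content of \eqref{E1.3.01}. The disjoint interiors then convert sums of integrals over sub-cubes of $Q_h^m(X_\nu)$ into $\|D^k f\|_{L_q(\R^m)}^q$.

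Combining all terms yields $E^q\le Ch^{-m}\sum_{\langle k\rangle=1}^d h^{\langle k\rangle q}\|D^k f\|_{L_q(\R^m)}^q$; applying Lemma \ref{L2.7} (b) with $r=1$ and parameter $d-1$ produces
\ba
E\le C(m,q)h^{-m/q}\max\{h\sa,(h\sa)^d\}\|f\|_{L_q(\R^m)},
\ea
which is \eqref{E2.20}. The main obstacle is the remainder analysis: the singular Jacobian $(1-t)^{-m}$ from the change of variables forces the critical threshold $d>m/q$, which is satisfied exactly by the definition \eqref{E1.3.01} together with the H\"older weighting; obtaining the sharp exponent $\max\{h\sa,(h\sa)^d\}$ (rather than the weaker $(h\sa)^{d+1}$) in the final bound further constrains the Taylor order to be exactly $d-1$ with remainder of order $d$, so that Lemma \ref{L2.7} (b) applies with precisely matching parameters. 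The case $m=1$, where $d=1$ appears borderline, is handled separately since the Taylor polynomial is then trivial and the entire difference reduces to a first-order integral that can be estimated directly by Hölder and the change of variables.
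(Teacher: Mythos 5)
Your argument is correct, and it reaches \eqref{E2.20} by a genuinely different route than the paper at the level of the key local estimate. Both proofs start identically (Minkowski's reverse triangle inequality reduces everything to $\bigl(\sum_\nu|f(X_\nu)-f(Y_\nu)|^q\bigr)^{1/q}$, the disjoint interiors convert local $L_q$-norms into $\|D^kf\|_{L_q(\R^m)}$, and Lemma \ref{L2.7}~(b) finishes), but the paper handles the pointwise difference on each cube by citing the Sobolev embedding theorem twice, producing the two estimates \eqref{E2.20a} (derivative orders $0$ through $d$) and \eqref{E2.20b} (orders $1$ through $d+1$), and then taking the minimum over $r\in\{0,1\}$ in \eqref{E2.19}: the $r=0$ bound alone gives only $\max\{1,(h\sa)^d\}$ and the $r=1$ bound alone only $\max\{h\sa,(h\sa)^{d+1}\}$, and it is the minimum of the two that yields $\max\{h\sa,(h\sa)^d\}$. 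You instead prove a single self-contained local estimate involving exactly the orders $1$ through $d$, exploiting the cancellation of the zero-order terms in the averaged Taylor difference; the change of variables $v=(1-t)u+tZ$ with Jacobian $(1-t)^m$ and the condition $d>m/q$ (guaranteed by \eqref{E1.3.01} for $m>1$, with the borderline $m=q=1$ case done directly via the fundamental theorem of calculus, as you note) make the $t$-integral converge. This buys a proof that avoids the external embedding-theorem citation and needs only one application of Lemma \ref{L2.7}~(b) with $r=1$, at the cost of a longer computation; the paper's version is shorter but relies on Adams and on the min-over-$r$ trick. One cosmetic point: when $d=1$ (which also occurs for $m>1$, $q>m$) your "Lemma \ref{L2.7}~(b) with parameter $d-1$" formally falls outside that lemma's hypothesis $d\in\N$, but then the sum contains only first-order derivatives and the bound $h\sa\|f\|_{L_q(\R^m)}$ follows directly from Bernstein's inequality \eqref{E2.17a}, so nothing is lost.
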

  \begin{proof}
  We first prove
  two estimates for the modulus of continuity
  of differentiable functions on a cube.
Let $F$ be a $d+1$ times continuously differentiable
  function on $Q^m_h\left(X_0\right)$, where $X_0\in\R^m$.
  Then for any
  $X\in Q^m_h\left(X_0\right)$ and $Y\in Q^m_h\left(X_0\right)$,
  the following estimate is valid
  by the Sobolev embedding theorem with $dq>m$ (see, e.g., \cite[Theorem 5.4, Eqn. (8)]{A1975}):

  \beq\label{E2.20a}
  \left\vert F(X)-F(Y)\right\vert
  \le 2\|F\|_{C\left(Q^m_h\left(X_0\right)\right)}
  \le C(m,q)h^{-m/q}
  \left(\sum_{0\le\langle k\rangle\le d}h^{\langle k\rangle q}\left\|D^k F\right\|_
  {L_q\left(Q^m_h\left(X_0\right)\right)}^q\right)^{1/q}.
  \eeq
  On the other hand, using again the embedding theorem, we have
  \bna\label{E2.20b}
  \left\vert F(X)-F(Y)\right\vert
  &\le& \sqrt{m}\vert X-Y\vert \sup_{\langle k\rangle=1}
  \left\|D^k F\right\|_{C\left(Q^m_h\left(X_0\right)\right)}\nonumber\\
  &\le& C(m,q)h^{-m/q}
  \left(\sum_{1\le\langle k\rangle\le d+1}h^{\langle k\rangle q}\left\|D^k F\right\|_
  {L_q\left(Q^m_h\left(X_0\right)\right)}^q\right)^{1/q}.
  \ena

  Next, using Minkowski's inequality for sums and estimates \eqref{E2.20a} and \eqref{E2.20b}
  for $F=f$, we obtain
  \bna\label{E2.20c}
  &&\left\vert \left(\sum_{\nu=1}^\iy\left\vert f
  \left(X_\nu\right)\right\vert^q\right)^{1/q}
  -\left(\sum_{\nu=1}^\iy\left\vert f
  \left(Y_\nu\right)\right\vert^q\right)^{1/q}\right\vert
  \le \left(\sum_{\nu=1}^\iy\left\vert f
  \left(X_\nu\right)-f
  \left(Y_\nu\right)
  \right\vert^q\right)^{1/q}\nonumber\\
  &&\le C(m,q)h^{-m/q}
  \min_{r\in\{0,1\}}
  \left(\sum_{r\le\langle k\rangle\le d+r}h^{\langle k\rangle q}\left\|D^k F\right\|_
  {L_q\left(\R^m\right)}^q\right)^{1/q}.
  \ena
Finally, inequality \eqref{E2.20} follows from \eqref{E2.20c} and
Bernstein-type inequality  \eqref{E2.19}.
\end{proof}

\section{Approximation of EFETs by Polynomials and Entire Functions}
\label{S3n}
\setcounter{equation}{0}
\noindent
In this section we discuss approximation of univariate EFETs by polynomials
on compacts from $\CC$ and approximation of multivariate EFETs by polynomials
on the octahedron and the cube. In addition, we also study unconventional approximation of
 EFETs by other EFETs.
\subsection{Approximation of EFETs by Polynomials}\label{S3.1n}
Approximation of EFETs by algebraic polynomials was
initiated by Bernstein \cite{B1946,B1947} and independently
it was
discussed by Logvinenko \cite{L1974,L1975}.
Various univariate and multivariate versions
of these results were obtained
by the author \cite{G1982,G1991,G2019b}. Most of
approximation theorems from these publications have discussed
EFETs either bounded or of polynomial growth on $\R^m$.
However, in this paper we need estimates of univariate and
mutivariate polynomial approximation
for general EFETs. Some of these results are based on
estimates of Chebyshev coefficients.
\subsubsection{Estimates of Chebyshev coefficients}\label{S3.1.1n}
Let $\sum_{k\in\Z^m_+}c_{k,m}(f,b)\prod_{j=1}^m T_{k_j}\left(x_j/b\right),
\,x\in Q^m_b,\,b>0,$
be the multivariate Fourier-Chebyshev series of a function
$f\in L_\iy\left(Q^m_b\right)$ with the coefficients
 \bna\label{E3.1}
 c_{k,m}(f,b)&:=&\frac{(2/\pi)^m}{2^{r_m(k)}}
 \bigintss_{Q^m_b}f(x)
 \prod_{j=1}^m\frac{ T_{k_j}\left(x_j/b\right)}{\sqrt{b^2-x_j^2}}dx\nonumber\\
 &=&\frac{(2/\pi)^m}{2^{r_m(k)}}
 \int_{[0,\pi]^m}f\left(b\cos t_1,\ldots,b\cos t_m\right)
 \prod_{j=1}^m \cos k_jt_j\,dt,\qquad k\in\Z^m_+.
 \ena
 Here, $r_m(k)$ is the number of zero components in a vector $k\in\Z^m_+$.

 In addition, let
 \beq\label{E3.2}
 \Gamma_R^m(b):=\left\{w=x+iy\in\CC^m:
 \left(\frac{x_j}{R/b+b/R}\right)^2
 +\left(\frac{y_j}{R/b-b/R}\right)^2\le(b/2)^2,\,1\le j\le m\right\}
 \eeq
 be the direct product of the sets encircled by the corresponding ellipses
  in $\CC^1$ with foci at the ends of $[-b,b]$ and with the sum of its semi-axes
  equal to $R>b$.
\begin{lemma}\label{L3.1}
If $f$ is a holomorphic and bounded function on the interior of $\Gamma_R^m(b)$,
then
 \beq\label{E3.3}
 \left\vert c_{k,m}(f,b)\right\vert\le 2^{m-r_m(k)}(b/R)^{\langle k\rangle}
 \|f\|_{L_\iy\left( \Gamma_R^m(b)\right)},\qquad k\in\Z^m_+.
 \eeq
\end{lemma}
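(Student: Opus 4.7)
The plan is to reduce the multivariate bound to an iterated contour integral on the distinguished boundary of a polyannulus, exploiting the Joukowski-type substitution $x_j=(b/2)\bigl(w_j+w_j^{-1}\bigr)$. This map transforms each interval $[-b,b]$ into the unit circle $|w_j|=1$ and, more importantly, maps the open annulus $b/R<|w_j|<R/b$ biholomorphically onto the interior of $\Gamma_R(b)\setminus[-b,b]$, in view of the standard fact that the ellipse in \eqref{E3.2} has foci at $\pm b$ and sum of semi-axes equal to $R$.

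First I would start from the second expression in \eqref{E3.1}, use the evenness of $f(b\cos t_j)\cos k_jt_j$ in each $t_j$ to extend the integral from $[0,\pi]^m$ to $[-\pi,\pi]^m$ (producing an extra factor $2^{-m}$), and then substitute $w_j=e^{it_j}$, $dt_j=dw_j/(iw_j)$. Setting $F(w):=f\bigl((b/2)(w_1+w_1^{-1}),\ldots,(b/2)(w_m+w_m^{-1})\bigr)$, the function $F$ is holomorphic on the open polyannulus $\prod_{j=1}^m\{b/R<|w_j|<R/b\}$ and satisfies $\|F\|_\iy\le\|f\|_{L_\iy(\Gamma_R^m(b))}$ via the Joukowski correspondence above.

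Next I would expand $\prod_j\cos k_jt_j=2^{-m}\prod_j\bigl(w_j^{k_j}+w_j^{-k_j}\bigr)=2^{-m}\sum_{\epsilon\in\{\pm1\}^m}\prod_j w_j^{\epsilon_jk_j}$, turning \eqref{E3.1} into a sum of $2^m$ iterated contour integrals of the form $(2i)^{-m}\oint F(w)\prod_j w_j^{\epsilon_jk_j-1}\,dw_1\cdots dw_m$ over the torus $\{|w_j|=1\}$. For each fixed $\epsilon$, I would deform the $w_j$-contour independently via Cauchy's theorem in each variable: to $|w_j|=\rho_j\to(b/R)^+$ when $\epsilon_jk_j>0$ and to $|w_j|=\rho_j\to(R/b)^-$ when $\epsilon_jk_j<0$; when $k_j=0$ the deformation is irrelevant. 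In every case the parametrization gives $\oint_{|w_j|=\rho_j}|w_j|^{\epsilon_jk_j-1}\,|dw_j|=2\pi\rho_j^{\epsilon_jk_j}$, and $\rho_j^{\epsilon_jk_j}\to(b/R)^{k_j}$, so by iterated application of Fubini's theorem the modulus of the $\epsilon$-integral is at most $(2\pi)^m(b/R)^{\langle k\rangle}\|f\|_{L_\iy(\Gamma_R^m(b))}$.

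Finally, assembling the prefactor $(1/\pi)^m 2^{-r_m(k)}(2i)^{-m}$ with the $2^m$ contour estimates yields the claimed inequality $|c_{k,m}(f,b)|\le 2^{m-r_m(k)}(b/R)^{\langle k\rangle}\|f\|_{L_\iy(\Gamma_R^m(b))}$. The main point requiring care is the bookkeeping of the powers of two and the components with $k_j=0$: the $2^m$ monomials from the expansion contain each distinct one with multiplicity $2^{r_m(k)}$, but the uniform bound is identical for all of them, so this redundancy is harmless and is perfectly compensated by the factor $2^{-r_m(k)}$ already present in \eqref{E3.1}. The hypothesis that $f$ is only holomorphic on the \emph{interior} of $\Gamma_R^m(b)$ (rather than on a neighborhood of its closure) is absorbed by performing the deformations to contours strictly inside the polyannulus and letting $\rho_j\to(R/b)^-$ and $\rho_j\to(b/R)^+$ at the end.
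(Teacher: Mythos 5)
Your argument is correct, but it proceeds differently from the paper. The paper first invokes the classical univariate bound $\vert c_{k_1,1}(f,1)\vert\le 2^{1-r_1(k_1)}R^{-k_1}\|f\|_{L_\iy(\Gamma_R^1(1))}$ as known (Bernstein, Timan) and then proves \eqref{E3.3} by induction on the number of variables, using that the partial coefficient $c_{k(l),l}(\vphi_l,1)$ is itself holomorphic and bounded in the remaining variable on the interior of $\Gamma_R^1(1)$, so the one-dimensional estimate can be iterated. You instead give a direct, self-contained proof in all $m$ variables at once: extend \eqref{E3.1} to $[-\pi,\pi]^m$, substitute $w_j=e^{it_j}$, expand $\prod_j\cos k_jt_j$ into $2^m$ monomials, and deform each circle $\vert w_j\vert=1$ inside the annulus $b/R<\vert w_j\vert<R/b$ toward the radius that makes $\vert w_j\vert^{\epsilon_jk_j}$ tend to $(b/R)^{k_j}$; the constant bookkeeping $(2\pi i)^{-m}2^{-r_m(k)}\cdot 2^m\cdot(2\pi)^m(b/R)^{\langle k\rangle}$ indeed reproduces $2^{m-r_m(k)}(b/R)^{\langle k\rangle}$ exactly. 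In effect you re-derive the univariate Bernstein estimate rather than citing it, which makes the proof longer but fully self-contained and avoids the induction; the paper's route is shorter because it leans on the known one-variable result. One small inaccuracy: the Joukowski map $x_j=(b/2)(w_j+w_j^{-1})$ is not biholomorphic from the whole annulus $b/R<\vert w_j\vert<R/b$ onto the slit ellipse interior (it is a two-to-one covering, biholomorphic only on each of the sub-annuli $1<\vert w_j\vert<R/b$ and $b/R<\vert w_j\vert<1$); however, all your argument actually uses is that this map sends the open annulus \emph{into} the interior of $\Gamma_R^1(b)$, so that $F$ is holomorphic on the polyannulus with $\|F\|_\iy\le\|f\|_{L_\iy(\Gamma_R^m(b))}$, and that part is correct, so the slip is harmless.
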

\begin{proof}
Assume for simplicity that $b=1$, and let $f$ satisfy the conditions of the lemma.
Note that the univariate estimate
\beq\label{E3.4}
\left\vert c_{k_1,1}(f,1)\right\vert\le 2^{1-r_1(k_1)}R^{-k_1}
 \|f\|_{L_\iy\left( \Gamma_R^1(1)\right)},\qquad k_1\in\Z^1_+,
 \eeq
 is well known (see \cite[Sect. 2.1, Lemma 1]{B1937} and \cite[Sect. 3.7.3]{T1963}).
 Let us set
 \ba
 \vphi_l\left(z_1,\ldots,z_l\right):=f\left(z_1,\ldots,z_l,z_{l+1},\ldots,z_m\right)
 \ea
 with fixed parameters $z_{l+1},\ldots,z_m,\,1\le l\le m$.
 To prove the lemma, it suffices to establish the inequality
 \beq\label{E3.5}
 \left\vert c_{k,p}\left(\vphi_p,1\right)\right\vert
 \le 2^{p-r_p(k)}R^{-\langle k\rangle}
 \left\|\vphi_p\right\|_{L_\iy\left( \Gamma_R^p(1)\right)},\qquad k\in\Z^p_+,
 \quad 1\le p\le m,
 \eeq
 by induction in $p$. For $p=1$ \eqref{E3.5} follows from \eqref{E3.4}. Next, assume
 that \eqref{E3.5} is valid for $p=l,\,k\in\Z^l_+,\,1\le l\le m-1$.
 Since $ c_{k,l}\left(\vphi_l,1\right)$
  holomorphic and bounded function in $z_{l+1}$ on the interior of $\Gamma_R^1(1)$,
  we can use \eqref{E3.4} and also \eqref{E3.5} for $p=l$ to obtain
  the following relations for $k\in\Z^{l+1}_+$ and $k(l):=\left(k_1,\ldots,k_l\right)$,
  \ba
  \left\vert c_{k,l+1}\left(\vphi_{l+1},1\right)\right\vert
  &=&\left\vert\bigintss_{-1}^1 \frac{c_{k(l),l}\left(\vphi_{l},1\right)T_{k_{l+1}}(x_{l+1})}
  {\sqrt{1-x^2_{l+1}}}\,dx_{l+1}\right\vert\\
  &\le& 2^{1-r_1\left(k_{l+1}\right)}R^{-k_{l+1}}\sup_{z_{l+1}\in \Gamma_R^1(1)}
  \left\vert c_{k_{(l)},l}\left(\vphi_{l},1\right)\right\vert\\
  &\le& 2^{l+1-r_{l+1}(k)}R^{-\langle k\rangle}\sup_{z_{l+1}\in \Gamma_R^1(1)}
  \left\|\vphi_l\right\|_{L_\iy\left( \Gamma_R^l(1)\right)}.
  \ea
  Therefore, \eqref{E3.5} holds true for $p=l+1$. Thus \eqref{E3.3} is valid.
\end{proof}
\subsubsection{Univariate approximation.}\label{S3.1.2n}
Here, we discuss univariate approximation on a symmetric (with respect to the origin)
compact $K\subset\CC^1$.

Let $z(w)=w+\sqrt{w^2-1}$ denote the conformal map of the ellipse $\Gamma_R^1(1)$
defined in \eqref{E3.2} onto the circle $\{z\in\CC^1:\vert z\vert=R\}$.
Let us set
\beq\label{E3.6}
\al=\al(K):=\max_{w\in K}\left\vert w+\sqrt{w^2-1}\right\vert
=\min_{R\in(1,\iy)}\{R:K\subseteq \Gamma^1_R(1)\}
=\left\vert w_0+\sqrt{w_0^2-1}\right\vert,
\eeq
where $w_0\in K\cap \Gamma^1_{R_0}(1)$ and $R_0$ is the extremal value in \eqref{E3.6}.
Since $K$ is symmetric, the next estimate immediately follows from \eqref{E3.6} and \eqref{E1.1}:
\beq\label{E3.6a}
\max_{w\in bK}\vert T_k(w/b)\vert \le \al^k.
\eeq
In addition, let us set
\ba
\psi(\g,K):=\frac{\sqrt{1+\g^2}}{\g}
-\log\left(\frac{\g+\sqrt{1+\g^2}}{\al}\right)
=\psi(\g)+\log \al,
\qquad \g\in(0,\iy),
\ea
where $\psi(\g)=\psi(\g,[-1,1])$ is defined by \eqref{E1.3.5a}.
Since $\psi$ is a strictly decreasing function in $\g$
 on $(0,\iy)$, there exists the unique solution
$\g_0=\g_0(\al)\in(0,\iy)$ to the equation $\psi(\g,K)=0$, and, in addition,
$\psi(\g,K)<0$ for $\g>\g_0$ and $\g_0+\sqrt{1+\g_0^2}>\al$.

\begin{lemma}\label{L3.2}
Given $\sa>0,\,b>0,$ and $n\in\N$,
let us denote
$\tau:=\frac{n}{\sa b}$.
In addition, let
  $f\in B_\sa$ satisfy the condition
\beq\label{E3.7}
\vert f(\xi)\vert\le Ae^{\sa\vert \xi\vert},\qquad \xi\in\CC^1,
\eeq
where $A>0$ is a constant.
If $\tau>\g_0(\al)$, then there exists a polynomial $U_n\in\PP_n$ such that
\beq\label{E3.8}
\max_{w\in bK}\left\vert f(w)-U_n(w)\right\vert
=\max_{w\in \frac{n}{\sa \tau}K}\left\vert f(w)-U_n(w)\right\vert
\le C(K)Ae^{n\psi(\tau,K)}.
\eeq
\end{lemma}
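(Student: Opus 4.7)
The plan is to take $U_n$ to be the $n$-th Chebyshev partial sum of $f$ on $[-b,b]$, namely
\[
U_n(w):=\sum_{k=0}^n c_{k,1}(f,b)\,T_k(w/b),
\]
and to estimate the tail of the Chebyshev expansion directly on the larger set $bK$. Since $f$ is entire, its Chebyshev expansion on $[-b,b]$ converges to $f$ uniformly on every ellipse $\G_R^1(b)$ with $R>b$; by the definition \eqref{E3.6} of $\al$, any $R>b\al$ makes $\G_R^1(b)\supset bK$, so the expansion represents $f$ on $bK$.

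The three key estimates I would combine are: the coefficient bound from Lemma~\ref{L3.1} in the case $m=1$, $\left\vert c_{k,1}(f,b)\right\vert\le 2(b/R)^k\|f\|_{L_\iy(\G_R^1(b))}$ for any $R>b$; the exponential-type bound, which together with the fact that the major semi-axis of $\G_R^1(b)$ equals $(R+b^2/R)/2$ yields $\|f\|_{L_\iy(\G_R^1(b))}\le A\exp(\sa(R+b^2/R)/2)$; and the Chebyshev inequality \eqref{E3.6a}, $\max_{w\in bK}\vert T_k(w/b)\vert\le \al^k$. These give, for every $R>b\al$,
\[
\max_{w\in bK}\left\vert f(w)-U_n(w)\right\vert\le \sum_{k=n+1}^\iy \left\vert c_{k,1}(f,b)\right\vert\al^k \le \frac{2A\,(b\al/R)^{n+1}}{1-b\al/R}\exp\!\left(\frac{\sa}{2}\left(R+\frac{b^2}{R}\right)\right).
\]

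The final step is to pick $R$ so that the exponent is minimized. Using $\sa b=n/\tau$, a direct calculation places the optimum essentially at $R_*:=b(\tau+\sqrt{1+\tau^2})$, at which $\sa(R_*+b^2/R_*)/2 = n\sqrt{1+\tau^2}/\tau$ and $b\al/R_* = \al/(\tau+\sqrt{1+\tau^2})$. Substituting and recalling the definition of $\psi(\tau,K)=\psi(\tau)+\log\al$, the exponent in the bound becomes $n\sqrt{1+\tau^2}/\tau-(n+1)\log\!\bigl((\tau+\sqrt{1+\tau^2})/\al\bigr)=n\psi(\tau,K)-\log\!\bigl((\tau+\sqrt{1+\tau^2})/\al\bigr)$; the stray logarithm contributes precisely the extra factor $b\al/R_*$ in front of the geometric tail. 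The prefactor $2A(b\al/R_*)/(1-b\al/R_*)$ is then bounded by a constant depending only on $K$: since $\tau>\g_0(\al)$ and $\psi(\g_0(\al),K)=0$, one has $b\al/R_*\le \al/(\g_0(\al)+\sqrt{1+\g_0(\al)^2})<1$ uniformly in $\tau$. The middle equality in \eqref{E3.8} is immediate from $b=n/(\sa\tau)$.

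The main obstacle is the bookkeeping in the last paragraph: verifying that the optimizer $R_*$ lies in the admissible range $R>b\al$ (so the geometric tail actually converges) and that the prefactor remains uniformly bounded across all $\tau>\g_0(\al)$, so that the resulting constant really depends only on $K$ and not on $\tau$. Both facts follow from the monotonicity of $\psi(\cdot,K)$ and the defining relation $\psi(\g_0(\al),K)=0$, but they are the one place where the hypothesis $\tau>\g_0(\al)$ is essential. Everything else is a classical Chebyshev-series estimate, tailored to exponential-type growth via the choice of $R_*$.
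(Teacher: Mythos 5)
Your proposal is correct and follows essentially the same route as the paper: the same Chebyshev partial sum $U_n$, the coefficient bound of Lemma \ref{L3.1} for $m=1$, the growth estimate on $\Gamma_R^1(b)$ (your $R_*=b(\tau+\sqrt{1+\tau^2})$ is exactly the paper's choice $\de=\tau$), and the same uniform control of the prefactor via $\al<\g_0(\al)+\sqrt{1+\g_0(\al)^2}$, which indeed follows from $\psi(\g_0(\al),K)=0$.
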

\begin{proof}
It follows from \eqref{E3.7} that for any $\de>0$,
\beq\label{E3.9}
\vert f(w)\vert\le Ae^{\sa b\sqrt{1+\de^2}}, \qquad w\in\Gamma^1_R(b),
\eeq
where $R=b\left(\de+\sqrt{1+\de^2}\right)$.

Next, following Bernstein \cite{B1947} (see also \cite[Sect. 5.4.4]{T1963}),
we approximate $f$ by the partial Fourier-Chebyshev sum
$U_n(w):=\sum_{k=0}^nc_{k,1}T_k(w/b)$, where
$c_{k,1}=c_{k,1}(f,b),\,k\in\Z^1_+$, is defined
by \eqref{E3.1}.
Then the estimate
\beq\label{E3.10}
\left\vert c_{k,1}\right\vert
\le \frac{2Ae^{\sa b\sqrt{1+\de^2}}}{\left(\de+\sqrt{1+\de^2}\right)^k},
\qquad k\in\Z^1_+,
\eeq
follows from \eqref{E3.9} and \eqref{E3.3} for $m=1$ (see also \eqref{E3.4}).

Since $f$ is an entire function, we see that
$f(w)=\sum_{k=0}^\iy c_{k,1}T_k(w/b)$ for $w\in \Gamma_R^1(b)$
(see \cite[Theorem 9.1.1]{S1959}).
Therefore, if $\al<\de+\sqrt{1+\de^2}$, then using \eqref{E3.10} and \eqref{E3.6a}, we obtain

\bna\label{E3.11}
&&\max_{w\in bK}\left\vert f(w)-U_n(w)\right\vert
\le \sum_{k=n+1}^\iy \left\vert c_{k,1}\right\vert\al^k\nonumber\\
&&\le \frac{2A}{1-\al/(\de+\sqrt{1+\de^2})}
\exp\left[\sa b\sqrt{1+\de^2}
-n\log\left(\frac{\de+\sqrt{1+\de^2}}{\al}\right)\right].
\ena
Note that if $\de=\tau=\frac{n}{\sa b}>\g_0(\al)$, then
$\al<\g_0+\sqrt{1+\g_0^2}< \de+\sqrt{1+\de^2}$. Then choosing
$\de=\tau$ in \eqref{E3.11}, we arrive at \eqref{E3.8} with
\ba
C(K)\le \frac{2}{1-\al/\left(\g_0+\sqrt{1+\g_0^2}\right)}.
\ea
\end{proof}
Examples of $K,\,\al,\,w_0$, and $\g_0(\al)$ are given below.
\begin{example}\label{Ex3.3}
(a) $K=[-1,1],\,\al=1,\,\g_0(\al)=1.5088\ldots$;\\
(b) $K=\Gamma_R^1(1),\,\al=R;$\\
(c) $K=\left\{w\in\CC^1:\vert w\vert\le M\right\},\,\al=M+\sqrt{M^2+1},\,
w_0=iM$;\\
(d) $K=\left\{w\in\CC^1:\vert w\vert\le 1\right\},\,\al=1+\sqrt{2},\,
\g_0(\al)=3.3541\ldots;$\\
(e) $K=\left\{x+iy\in\CC^1:\vert x\vert\le 1,\,\vert y\vert\le 1\right\},\,
\al=\frac{1+\sqrt{5}}{2}+\sqrt{\frac{1+\sqrt{5}}{2}}=2.8900\ldots,\,
w_0=1+i,\,\g_0(\al)=3.9896\ldots$.
\end{example}
Example \ref{Ex3.3} (a) is trivial, while examples (b), (c), (d), and (e)
 follow from relations \eqref{E3.6}.

 \begin{remark}\label{R3.4}
 Lemma \ref{L3.2} will be used in the following two situations:\\
 \textbf{Case 1.} $\sa$ is independent of $n$ and $b$ is proportional to $n$.\\
 \textbf{Case 2.} $b$ is independent of $n$ and $\sa$ is proportional to $n$.

 In both cases $\tau>\g_0$ is a fixed number, so the right-hand side of
 \eqref{E3.8} is $o(1)$ as $n\to\iy$.
Case 2 has never been used before, while
 various versions of Case 1 have been discussed since the 1940s.
 For the interval $K$
 from Example \ref{Ex3.3} (a),
 a weaker version of \eqref{E3.8} was proved by Bernstein \cite{B1947}
 (see also \cite[Sect. 5.4.5]{T1963} and \cite[Appendix, Sect. 83]{A1965}).
 For the unit disk $K$
 from Example \ref{Ex3.3} (d), Logvinenko \cite[Lemmas 2]{L1974,L1975}
 established the relation
 $\lim_{n\to\iy}\left\|f-P_n\right\|_{L_\iy\left(\frac{n}{\sa \tau}K\right)}=0$
 with the Taylor polynomial $P_n$
 and an integer $\tau>e$, while inequality
 \eqref{E3.8} is valid only for $\tau>3.3541\ldots$.
 The author \cite[Lemma 4.5]{G1982} proved Lemma \ref{L3.2}
 for $f(w)=e^{\sa w}$ and
 the square $K$
 from Example \ref{Ex3.3} (e).
 \end{remark}
  \begin{remark}\label{R3.5}
   The following relation shows that for $K=[-1,1]$,
   a fixed $\sa>0$, and a fixed $\tau>\g_0(1)=1.5088\ldots$,
   estimate
  \eqref{E3.8} cannot be essentially improved:
  \ba
  \lim_{n\to\iy}\left(\inf_{U_n\in\PP_n}
  \max_{w\in \left[-\frac{n}{\sa \tau},\frac{n}{\sa \tau}\right]}
  \left\vert e^{\sa w}-U_n(w)\right\vert\right)^{1/n}
  =e^{\psi(\tau)}.
  \ea
The corresponding upper estimate follows from \eqref{E3.8},
while the lower one was proved in \cite[Appendix, Sect. 83]{A1965}.
\end{remark}

\subsubsection{Multivariate approximation.}\label{S3.1.3n}
Here, we discuss two multivariate versions of
Lemma \ref{L3.2}.
The first one is an extension of Case 1 for polynomials
from $\PP_{n,m}$
 and the second one is an extension of Case 2 for
 polynomials from
 $\QQ_{n,m}$.

\begin{lemma}\label{L3.6}
For $f\in B_{\sa,m}$ and for a fixed $\tau\ge 4$ there exists
 a polynomial
$P_n\in\PP_{n,m}$ such that
\beq\label{E3.12}
\left\|f-P_n\right\|_{L_\iy\left((n/\tau)O_{1/\sa}^m\right)}\le C_4 e^{-an},
\eeq
where $C_4=C_4(f,\tau,\sa,m)$ and $a=a(\tau)>0$ are
independent of $n$.
\end{lemma}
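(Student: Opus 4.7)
The plan is to take $P_n$ to be the multivariate Taylor polynomial of $f$ at the origin of total degree at most $n$,
\[
P_n(x)=\sum_{\langle k\rangle\le n}\frac{D^k f(0)}{k!}\,x^k\in\PP_{n,m},
\]
and to reduce the error on the octahedron to a univariate Taylor-remainder bound along radial lines. Fix $x\in (n/\tau)O^m_{1/\sa}\setminus\{0\}$; set $t:=\|x\|_1\le n/(\sa\tau)$ and $y:=x/t$, so $\|y\|_1=1$. The one-variable slice $g_y(w):=f(wy)$ is entire, and Definition \ref{D1.3} gives $|g_y(w)|\le C_0(\vep,f)\exp(\sa(1+\vep)|w|)$ for every $\vep>0$, since $\sum_{j=1}^m|wy_j|=|w|\|y\|_1=|w|$. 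A direct multinomial computation of $g_y^{(j)}(0)$ yields the identity
\[
P_n(wy)=\sum_{j=0}^n\frac{g_y^{(j)}(0)}{j!}\,w^j,
\]
so $f(x)-P_n(x)$ equals the degree-$n$ Taylor remainder of $g_y$ evaluated at $t$.

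Next I would estimate this remainder using Cauchy's inequality on the disk of radius $R$ centred at $s\in[-t,t]$:
\[
|g_y^{(n+1)}(s)|\le \frac{(n+1)!}{R^{n+1}}\,C_0(\vep,f)\,e^{\sa(1+\vep)(|s|+R)}.
\]
Choosing the optimizer $R=(n+1)/[\sa(1+\vep)]$ and applying Stirling's formula gives
\[
|g_y^{(n+1)}(s)|\le C(f,\vep)\sqrt{n+1}\,[\sa(1+\vep)]^{n+1}\,e^{\sa(1+\vep)|s|}.
\]
Substituting this into the Lagrange remainder bound $|f(x)-P_n(x)|\le t^{n+1}\max_{|s|\le t}|g_y^{(n+1)}(s)|/(n+1)!$, using $t\le n/(\sa\tau)$, and invoking Stirling once more to absorb the factorial, I obtain
\[
|f(x)-P_n(x)|\le C(f,\vep)\left(\frac{e(1+\vep)}{\tau}\right)^{n}\exp\!\left(\frac{(1+\vep)n}{\tau}\right)=C(f,\vep)\,A(\vep)^n,
\]
where $A(\vep):=\tau^{-1}e(1+\vep)\exp((1+\vep)/\tau)$. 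The bound is uniform in $y$ (and hence in $x$), and the case $x=0$ is trivial.

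The crucial step is to verify that $A(\vep)<1$ for some admissible $\vep>0$, which will yield $a(\tau):=-\log A(\vep)>0$ and the claimed decay with $C_4=C(f,\vep)=C_4(f,\tau,\sa,m)$. At $\vep=0$ one has $\log A(0)=1+1/\tau-\log\tau$, which is negative exactly when $\log\tau>1+1/\tau$. Since the left side is increasing and the right side is decreasing in $\tau$, and since $\log 4=1.386\ldots>1.25=1+1/4$, we conclude $A(0)<1$ for every $\tau\ge 4$, and by continuity $A(\vep)<1$ for all sufficiently small $\vep>0$.

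The main obstacle is precisely this last quantitative check: the hypothesis $\tau\ge 4$ is exactly the threshold that makes the optimized Cauchy/Stirling base $A(0)=(e/\tau)e^{1/\tau}$ strictly below $1$ with enough slack for a positive $\vep$. Everything else is a clean ray-wise reduction to one variable, with the geometry of the octahedron entering solely through the matching inequality $\|x\|_1\le n/(\sa\tau)$, which pairs naturally with the exponential type $\sa$ of the slice $g_y$.
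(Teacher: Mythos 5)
Your proof is correct, but it takes a genuinely different route from the paper. The paper represents $f$ via the Ehrenpreis--Martineau theorem as $f(z)=\int_{(1+\vep)W}\vphi_{\vep,f}(w)e^{(z,w)}\,dw$ with $W$ a complex cube of edge $2\sa$, and then replaces the kernel $e^{\xi}$ by its partial Fourier--Chebyshev sum $U_n$ on the complex square $K_1$ of Example \ref{Ex3.3} (e) via Lemma \ref{L3.2}, setting $P_n(x)=\int_{(1+\vep)W}\vphi_{\vep,f}(w)U_n((x,w))\,dw$; the threshold $\tau\ge 4$ there comes from $\g_0(\al(K_1))=3.9896\ldots$. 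You instead take $P_n$ to be the Taylor polynomial of total degree $n$, slice along rays $x=ty$ with $\|y\|_1=1$ so that the slice $g_y(w)=f(wy)$ has exponential type $\sa(1+\vep)$ uniformly in $y$ (this is exactly where the $\ell_1$-geometry of $(n/\tau)O^m_{1/\sa}$ pairs with the $\sum_{j}\vert z_j\vert$ in Definition \ref{D1.3}), and control the univariate remainder by Cauchy's inequality at the optimal radius plus Stirling; your threshold is $\log\tau>1+1/\tau$ (roughly $\tau>3.6$), which covers $\tau\ge 4$ with room for a positive $\vep$. Your argument is more elementary and self-contained (no several-complex-variables representation theorem, no Chebyshev-coefficient estimates) and is in the spirit of Logvinenko's Taylor-polynomial method recalled in Remarks \ref{R3.4} and \ref{R3.7a}, but without his dimension-dependent restriction $\tau>\lceil em\rceil$; the paper's route, in exchange, reuses machinery (Lemmas \ref{L3.1} and \ref{L3.2}) needed anyway for Lemma \ref{L3.7} and yields constants expressed through $\psi(\cdot,K)$. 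Two cosmetic points: since $g_y$ is complex-valued you should invoke the integral (or contour-integral) form of the Taylor remainder rather than Lagrange's mean-value form --- the bound $t^{n+1}\max_{\vert s\vert\le t}\vert g_y^{(n+1)}(s)\vert/(n+1)!$ is unchanged --- and your final display drops a harmless factor $\sqrt{n+1}$, which is absorbed by taking $a$ slightly smaller than $-\log A(\vep)$.
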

\begin{proof}
First, let us set
$W:=\left\{x+iy\in\CC^m:x\in Q_\sa^m,y\in Q_\sa^m\right\}$.
In addition, let $U_n\in\PP_n$ be a polynomial from
Lemma \ref{L3.2}
for $f(\xi)=e^\xi$, and let $K_1$ be the square from
Example \ref{Ex3.3} (e).

Then it follows from \eqref{E3.8} and Example \ref{Ex3.3}
 (e) that for any
$\vep>0$ and $\tau/(1+\vep)>3.9896\ldots$, the following
inequalities hold:
\bna\label{E3.13}
\max_{t\in (n/\tau)O_{1/\sa}^m}\max_{w\in (1+\vep)W}
\left\vert e^{(t,w)}-U_n((t,w))\right\vert
&\le& \max_{\xi\in ((1+\vep)n/\tau)K_1}
\left\vert e^{\xi}-U_n(\xi)\right\vert\nonumber\\
&\le& C(K_1)e^{n\psi(\tau/(1+\vep),K_1)}.
\ena
Next, for any $z=x+iy\in\CC^m$,
\ba
\sa\sum_{j=1}^m\left\vert z_j\right\vert
\le \sa\sum_{j=1}^m\left(\left\vert x_j\right\vert
+\left\vert y_j\right\vert\right)
=\sup_{w\in W}\mathrm{Re} (z,w)
:=H_W(z).
\ea
Hence for any $\vep>0$,
\ba
\vert  f(z)\vert \le C(f,\vep)e^{ H_W(z)+\vep\vert z\vert},
\ea
and by the Ehrenpreis-Martineau theorem
\cite{E1961, M1963, R1974}, there exists a continuous function
$\vphi_{\vep,f}$ on $\CC^m$, with $\vphi_{\vep,f}=0$ on $\CC^m\setminus (1+\vep)W$
such that
\beq\label{E3.14}
f(z)=\int_{(1+\vep)W} \vphi_{\vep,f}(w)e^{(z,w)}\,dw,\qquad z\in\CC^m,
\eeq
(see, e.g., the proof of representation \eqref{E3.14} in
\cite[Theorem 3.6.5]{R1974}).
Further, setting
\ba
P_n(x):=\int_{(1+\vep)W} \vphi_{\vep,f}(w)U_n((x,w))\,dw,
\ea
we see that $P_n\in\PP_{n,m}$, and it follows from \eqref{E3.13}
that
\ba
\left\|f-P_n\right\|_{L_\iy\left((n/\tau)O_{1/\sa}^m\right)}
\le C(K_1)\int_{(1+\vep)W} \left\vert\vphi_{\vep,f}(w)\right\vert\,dw
\,\,e^{n\psi(\tau/(1+\vep),K_1)}
\ea
for $\tau/(1+\vep)>\g_0=\g_0(\al(K_1))=3.9896\ldots$
(see  Example \ref{Ex3.3} (e)).
It remains to choose $\vep=3.9897/\g_0-1$ and to set
$a:=-\psi(\g_0\tau/3.9897,K_1)$. Then \eqref{E3.12} is valid
for $\tau\ge 4$.
\end{proof}

\begin{lemma}\label{L3.7}
Let $b>0,\,\tau>\g_0(1)=1.5088\ldots,\,A>0$, and
$n\in\N$ be given numbers.
In addition, let $f$ be an entire function, satisfying the inequality
\beq\label{E3.15}
\vert f(w)\vert\le A\exp\left(\sa\sum_{j=1}^m
\left\vert w_j\right\vert\right),\qquad w\in\CC^m,
\eeq
where $\sa={n}/{(mb\tau)}$.
Then there exists a polynomial $P_n\in\QQ_{n,m}$ such that
\beq\label{E3.16}
\left\|f-P_n\right\|_{L_\iy\left(Q^m_b\right)}\le C(m) A e^{n\psi(\tau)},
\eeq
where
$C\le m2^m\left(1-1/\left(\tau+\sqrt{1+\tau^2}\right)\right)^{-m}
<m2^m \left(1.44^{m}\right)$.
\end{lemma}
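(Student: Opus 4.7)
The plan is to construct $P_n$ as the multivariate partial Fourier--Chebyshev sum of $f$ truncated to multi-indices in $\Z^m_+ \cap Q^m_n$, namely
\begin{equation*}
P_n(x) := \sum_{k \in \Z^m_+ \cap Q^m_n} c_{k,m}(f,b) \prod_{j=1}^m T_{k_j}(x_j/b).
\end{equation*}
This automatically lies in $\QQ_{n,m}$. Since $|T_{k_j}(x_j/b)| \le 1$ for $x \in Q^m_b$, and since $f$ equals its Fourier--Chebyshev series on $Q^m_b$ (convergence being ensured once we show coefficient decay below), the error satisfies
\begin{equation*}
\|f - P_n\|_{L_\iy(Q^m_b)} \le \sum_{k \in \Z^m_+,\ \max_j k_j > n} |c_{k,m}(f,b)|.
\end{equation*}
I would then decompose the index set by union bound, $\{k:\max_j k_j > n\} \subseteq \bigcup_{j=1}^m \{k : k_j > n\}$, giving a factor of $m$.

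To bound each Chebyshev coefficient, I would apply Lemma \ref{L3.1} with a common radius $R = b\rho$ for all variables, where $\rho > 1$ is to be optimized. The exponential bound \eqref{E3.15} and the fact that the semi-major axis of $\Gamma_R^1(b)$ equals $(R + b^2/R)/2 = b(\rho+1/\rho)/2$ together yield
\begin{equation*}
\|f\|_{L_\iy(\Gamma_R^m(b))} \le A\exp\Bigl(\tfrac{m\sa b(\rho+1/\rho)}{2}\Bigr) = A\exp\Bigl(\tfrac{n(\rho+1/\rho)}{2\tau}\Bigr),
\end{equation*}
using $\sa b = n/(m\tau)$. Lemma \ref{L3.1} then gives $|c_{k,m}(f,b)| \le 2^m\rho^{-\langle k\rangle} A \exp(n(\rho+1/\rho)/(2\tau))$. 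By symmetry the geometric tail satisfies
\begin{equation*}
\sum_{k: k_1 > n} \rho^{-\langle k\rangle} = \rho^{-(n+1)}(1 - 1/\rho)^{-1}(1-1/\rho)^{-(m-1)} = \rho^{-(n+1)}(1-1/\rho)^{-m},
\end{equation*}
and summing $j=1,\dots,m$ introduces an extra factor of $m$. Combining everything yields
\begin{equation*}
\|f - P_n\|_{L_\iy(Q^m_b)} \le m\,2^m(1-1/\rho)^{-m}\, A\, \rho^{-1} \exp\bigl(n\bigl[(\rho+1/\rho)/(2\tau) - \log\rho\bigr]\bigr).
\end{equation*}

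The main step — and what makes the bound sharp — is optimizing the exponent over $\rho > 1$. Differentiating in $\rho$ gives the quadratic $\rho^2 - 2\tau\rho - 1 = 0$, whose positive root is $\rho_* = \tau + \sqrt{1+\tau^2}$; at this value $\rho_* + 1/\rho_* = 2\sqrt{1+\tau^2}$, so the bracketed exponent equals $\sqrt{1+\tau^2}/\tau - \log(\tau+\sqrt{1+\tau^2}) = \psi(\tau)$. Discarding the harmless factor $\rho_*^{-1} < 1$ produces the claimed bound with constant $C(m) = m\,2^m(1 - 1/(\tau+\sqrt{1+\tau^2}))^{-m}$, and for $\tau > \g_0(1) = 1.5088\dots$ one checks $\rho_* > 3.31$, whence $(1-1/\rho_*)^{-1} < 1.44$, yielding the numerical bound $C < m\, 2^m(1.44^m)$. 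The only real obstacle is the optimization and the accompanying bookkeeping: the condition $\tau > \g_0(1)$ is precisely what is needed so that $\psi(\tau) < 0$ and the error decays exponentially in $n$ — this is the same threshold that appeared in the univariate Lemma \ref{L3.2}, now reflecting the geometric balance between the growth of $f$ on the ellipse product and the decay of the Chebyshev coefficients.
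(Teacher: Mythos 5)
Your proof is correct and follows essentially the same route as the paper: truncating the multivariate Fourier--Chebyshev expansion to $\Z^m_+\cap Q^m_n$, bounding the coefficients via Lemma \ref{L3.1} together with the growth of $f$ on $\Gamma^m_R(b)$, and summing the geometric tail. The only differences are cosmetic — you estimate the tail by a union bound over the coordinates $\{k_j>n\}$ and find $\rho_*=\tau+\sqrt{1+\tau^2}$ by optimization, while the paper subtracts the cube partial sum from the full geometric series and simply substitutes $\de=\tau$, which is the same choice of ellipse.
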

\begin{proof}
Note first that it follows from \eqref{E3.15} that for any $\de>0$,
\beq\label{E3.17}
\vert f(w)\vert
\le Ae^{m\sa b\sqrt{1+\de^2}},\qquad w\in\Gamma_R^m(b),
\eeq
(see \eqref{E3.9} for $m=1$), where
$R=b\left(\de+\sqrt{1+\de^2}\right)$
and $\Gamma_R^m(b)$ is defined by \eqref{E3.2}.

Similarly to the proof of Lemma \ref{L3.2}, we approximate $f$ by
the multivariate partial Fourier-Chebyshev sum
$\sum_{k\in Q^m_n\cap\Z^m_+}c_{k,m}(f,b)
\prod_{j=1}^m T_{k_j}\left(x_j/b\right)$,
where $c_{k,m}(f,b),\,k\in\Z^m_+,$ is defined in \eqref{E3.1}.
Then the following estimate for the Chebyshev coefficients follows from
\eqref{E3.17} and \eqref{E3.3}:
\beq\label{E3.18}
\left\vert c_{k,m}\right\vert
\le \frac{2^mAe^{m\sa b\sqrt{1+\de^2}}}
{\left(\de+\sqrt{1+\de^2}\right)^{\langle k\rangle}},
\qquad k\in\Z^m_+.
\eeq
Next,
\beq\label{E3.19}
f(w)=
\sum_{k\in\Z^m_+}c_{k,m}(f,b)\prod_{j=1}^m T_{k_j}\left(w_j/b\right),
\qquad w\in Q^m_b.
\eeq
Indeed, the series $\sum_{k\in\Z^m_+}c_{k,m}(f,b)\prod_{j=1}^m\cos k_jt_j$
converges uniformly on $Q^m_\pi$ to a function $S$\linebreak
 by estimate
\eqref{E3.18}.
Then the Fourier coefficients of $S$ coincide with $c_{k,m},\,k\in\Z^m_+,$
so $S(x)=f\left(b\cos x_1,\ldots,b\cos x_m\right)$ and
\eqref{E3.19} holds.

Finally, it follows from \eqref{E3.19} and \eqref{E3.18} that
\bna\label{E3.20}
\left\|f-P_n\right\|_{L_\iy\left(Q^m_b\right)}
&\le& \sum_{k\in\Z^m_+\setminus Q^m_n}\left\vert c_{k,m}(f,b)\right\vert
\nonumber\\
&\le& 2^mAe^{m\sa b\sqrt{1+\de^2}}
\left(\sum_{k\in\Z^m_+}
\left(\de+\sqrt{1+\de^2}\right)^{-\langle k\rangle}
-\sum_{k\in Q^m_n}\left(\de+\sqrt{1+\de^2}\right)^{-\langle k\rangle}
\right) \nonumber\\
&\le& m2^mA\left(1-1/\left(\de+\sqrt{1+\de^2}\right)\right)^{-m}
e^{m\sa b\sqrt{1+\de^2}-n\log\left(\de+\sqrt{1+\de^2}\right)}.
\ena
Choosing $\de=\tau$ in \eqref{E3.20}, we arrive at \eqref{E3.16}.
\end{proof}

\begin{remark}\label{R3.7a}
The following version of Lemmas \ref{L3.6} and \ref{L3.7} was
proved in \cite[Lemma 2]{L1974}:
if $f\in B_{1,m}$, then
$\lim_{n\to\iy}\left\|f-P_n\right\|_{L_\iy\left(Q^m_{n/\tau}\right)}=0$
 with the Taylor polynomial $P_n\in\PP_{n,m}$
 and an integer $\tau>\lceil em\rceil$.
It is difficult to compare this result with Lemmas \ref{L3.6} and \ref{L3.7}
 because the sets $O_{1}^m$ and $Q_{1}^m$,
 the conditions $\tau\ge 4$ and $\tau>\lceil em\rceil$,
 and the polynomial classes $\PP_{n,m}$ and $\QQ_{n,m}$
  are different.
\end{remark}

\subsection{Approximation of EFETs by Entire Functions}\label{S3.2n}
Throughout Section \ref{S3.2n}, $f\in B_{\sa,m},\,q\in[1,\iy]$, and
$n\in\N$; we also set
$w^*:=w\left(O_{1/\sa}^m\right)=2/\left(\sa\sqrt{m}\right)$ by \eqref{E1.0}.
Given $\vep>0$ and $\tau\ge 4$, let us set
\beq\label{E3.21}
\be=\be(\tau,\vep,w^*):=\frac{2\tau e^{(1+2\vep)/\tau}}{w^*}.
\eeq
In addition, let $P_{2n}\in\PP_{2n,m}$ be a polynomial from Lemma \ref{L3.6}.
We define a sequence of entire functions of spherical type
$2\be+O(1/n)$ (see Definition \ref{D1.3}) as $n\to\iy$ by the formula
\beq\label{E3.22}
f_n(x):=P_{2n}(x)H_{\be,n}(x)
:=P_{2n}(x)
\left[\frac{\sin(\be\vert x\vert/n)}{\be\vert x\vert/n}\right]^
{2n+2\lceil m/(2q)\rceil +2}.
\eeq
Below, we study certain properties of $f_n$.

\begin{property}\label{P3.8}
For any compact set $K\subset \R^m$,
\beq\label{E3.23}
\lim_{n\to\iy}\left\|f-f_n\right\|_{L_q(K)}=0.
\eeq
\end{property}
\begin{proof}
Since $0\le H_{\be,n}(x)\le 1$ for $x\in\R^m$, we have
\bna\label{E3.24}
\left\|f-f_n\right\|_{L_q(K)}
&=&\left\|\left(f-P_{2n}\right)
+\left(P_{2n}-f\right)\left(1-H_{\be,n}\right)
+f\left(1-H_{\be,n}\right)
\right\|_{L_q(K)}\nonumber\\
&\le& 2\left\|f-P_{2n}\right\|_{L_q(K)}
+\left\|f\right\|_{L_q(K)}
\left\|1-H_{\be,n}\right\|_{L_\iy(K)}.
\ena
Next, $1-H_{\be,n}(x)\le C(\be,m,q)\vert x\vert^2/n,
\,x\in\R^m$, by an elementary inequality
$1-\left(\g^{-1}\sin \g\right)^{2N}\le N\g^2/3,
\,\g\in\R^1,\,N\in\N$.
Thus \eqref{E3.23} immediately follows from \eqref{E3.24}
and inequality \eqref{E3.12} of Lemma \ref{L3.6}.
\end{proof}

\begin{property}\label{P3.9}
The following inequalities are valid:
\bna
&&\left\vert f_n(x)\right\vert
\le Ce^{-2n\vep/\tau}\left(\frac{w^*n}{\tau\vert x\vert}
\right)^{m/q+2},\quad x\in\R^m\setminus(2n/\tau)O_{1/\sa}^m;\label{E3.25}\\
&&\left\|f_n\right\|
_{L_q\left(\R^m\setminus(2n/\tau)O_{1/\sa}^m\right)}
\le Cn^{m/q}e^{-2n\vep/\tau};\label{E3.26}\\
&&\left\|f_n\right\|
_{L_q(\R^m)}<\iy.\label{E3.27}
\ena
Here, constants $C=C(f,\tau,\sa,w^*,m,q)$ are independent of
$n$ and $x$.
\end{property}
\begin{proof}
To prove \eqref{E3.25}, we first use \eqref{E3.12}
of Lemma \ref{L3.6} and Definition \ref{D1.3} to estimate
$P_{2n}$ in \eqref{E3.22}
\bna\label{E3.28}
\left\|P_{2n}\right\|
_{L_\iy\left((2n/\tau)O_{1/\sa}^m\right)}
&\le& \left\|f-P_{2n}\right\|
_{L_\iy\left((2n/\tau)O_{1/\sa}^m\right)}
+\left\|f\right\|
_{L_\iy\left((2n/\tau)O_{1/\sa}^m\right)}\nonumber\\
&\le& C_4e^{-2an}+C_0e^{2n(1+\vep)/\tau}
\le C_5(f,\sa,\vep,\tau,m)e^{2n(1+\vep)/\tau}.
\ena
Next, using Lemma \ref{L2.4} (a) and Lemma \ref{L2.3} (b),
we obtain
from \eqref{E3.28} for $x\in \R^m\setminus(2n/\tau)O_{1/\sa}^m$,
\beq\label{E3.29}
\left\vert P_{2n}(x)\right\vert
\le T_{2n}\left(\frac{\tau\vert x\vert}{w^* n}\right)
\left\|P_{2n}\right\|
_{L_\iy\left((2n/\tau)O_{1/\sa}^m\right)}
\le \left(C_5/2\right)\left(
\frac{2\tau e^{(1+\vep)/\tau}\vert x\vert}
{w^* n}\right)^{2n}.
\eeq
Furthermore, it follows from \eqref{E3.21}, \eqref{E3.22},
and \eqref{E3.29} that
\beq\label{E3.30}
\left\vert f_n(x)\right\vert
\le Ce^{-2n\vep/\tau}\left(\frac{w^* n}{\tau\vert x\vert}
\right)^{2\lceil m/(2q)\rceil+2},
\qquad x\in \R^m\setminus(2n/\tau)O_{1/\sa}^m,
\eeq
where $C=\left(C_5/2\right)\left(2e^{-(1+2\vep)/\tau}\right)^
{2\lceil m/(2q)\rceil+2}$.
Since
\beq\label{E3.31}
\R^m\setminus(2n/\tau)O_{1/\sa}^m \subseteq
\{x\in\R^m:\vert x\vert >w^*n/\tau\},
\eeq
inequality
\eqref{E3.25} is a direct consequence of \eqref{E3.30}
and \eqref{E3.31},
while \eqref{E3.26} immediately follows from
\eqref{E3.25} and \eqref{E3.31}.
Finally, \eqref{E3.27} is an immediate
consequence of \eqref{E3.26}.
\end{proof}

\begin{property}\label{P3.10}
Let $\Om:=\left\{X_\nu\right\}_{\nu=1}^\iy$
 be a $\de_1$-packing net for $\R^m$
 (see Definition \ref{D1.2}).
 In addition, let $n\in\N,\,n\ge \de_1\tau/w^*$, and
 $\Om(n):=\Om
 \cap \left(\R^m\setminus(2n/\tau)O_{1/\sa}^m\right)$.
 Then the following inequalities are valid:
 \bna
 &&\left(\sum_{X_\nu\in \Om(n)}
 \left\vert f_n(X_\nu)\right\vert^q\right)^{1/q}
 \le Cn^{m/q}e^{-2n\vep/\tau},\label{E3.32}\\
 &&\left(\sum_{\nu=1}^\iy
 \left\vert f_n(X_\nu)\right\vert^q\right)^{1/q}
 \le \left(\sum_{\nu=1}^\iy
 \left\vert f(X_\nu)\right\vert^q\right)^{1/q}
 +Cn^{m/q}e^{-bn},\label{E3.33}
 \ena
 where the constants $C=C(f,\sa,\vep,\tau,m,q,\de_1)$
 and $b=b(\tau,\vep)>0$ are independent of $n$.
\end{property}
\begin{proof}
We first prove estimate \eqref{E3.32}.
If $q=\iy$, then \eqref{E3.32} immediately follows from
\eqref{E3.25} of Property \ref{P3.9} and \eqref{E3.31}.
If $q\in[1,\iy)$, then by \eqref{E3.25} and \eqref{E3.31},
\bna\label{E3.34}
\left(\sum_{X_\nu\in \Om(n)}
 \left\vert f_n(X_\nu)\right\vert^q\right)^{1/q}
 &\le& Ce^{-2n\vep/\tau}S_n\nonumber\\
 &:=&Ce^{-2n\vep/\tau}
 \left(\sum_{X_\nu\in \Om(n),\vert X_\nu\vert\ge w^*n/\tau}
 \left(\frac{w^*n}{\tau\vert X_\nu\vert}\right)^{m+2q}\right)
 ^{1/q}.
\ena
To estimate $S_n$, we introduce the finite sets
\ba
\Om_l:=\left\{X_\nu\in \Om:2^lw^*n/\tau
\le\left\vert X_\nu\right\vert
< 2^{l+1}w^*n/\tau\right\},
\qquad l\in Z^1_+.
\ea
Any $X_\nu\in \Om$
with $\vert X_\nu\vert\ge w^*n/\tau$
belongs to $\Om_l$ for a certain $l\in Z^1_+$
and, in addition,
$\frac{w^*n}{\tau\vert X_\nu\vert}
\le 2^{-l}$.
Then it follows from \eqref{E3.34} that
\beq\label{E3.35}
S_n\le \left(\sum_{l=0}^\iy \mathrm{card}(\Om_l)2^{-l(m+2q)}
\right)^{1/q}.
\eeq
It remains to estimate $\mathrm{card}(\Om_l),\,l\in Z^1_+$.
We first recall that $\Om$ is
a $\de_1$-packing net for $\R^m$, i.e.,
the family of open cubes
$\left\{\mathring{Q}_{\de_1/2}^m\left(X_\nu\right)\right\}_{\nu=1}^\iy$
and therefore, the family of open balls
$\left\{\mathring{\BB}^m_{\de_1/2}\left(X_\nu\right)\right\}_{\nu=1}^\iy$
are pairwise disjoint by Lemma \ref{L2.1b} (b).
Setting now $R(l):=2^{l}w^*n/\tau,\,l\in Z^1_+$, we see that $R(l+1)+\de_1/2<R(l+2)$
by the condition $n\ge \de_1\tau/w^*$. Then we obtain for $l\in Z^1_+$,

\bna\label{E3.36}
\mathrm{card}(\Om_l) \left\vert \BB^m_{\de_1/2}\right\vert_m
&=& \sum_{X_\nu\in \Om_l}
\left\vert \BB^m_{\de_1/2}\left(X_\nu\right)\right\vert_m
\le \left\vert \BB^m_{R(l+1)+\de_1/2}\right\vert_m\nonumber\\
&\le& \left\vert \BB^m_{R(l+2)}\right\vert_m
\le C(\tau,m,w^*)2^{l m}n^m.
\ena
Collecting estimates \eqref{E3.34}, \eqref{E3.35}, and
\eqref{E3.36}, we arrive at \eqref{E3.32}.

Next, we prove \eqref{E3.33}.
Using \eqref{E3.22} and Lemma \ref{L3.6}, we have
\bna\label{E3.37}
\left(\sum_{X_\nu\in (2n/\tau)O_{1/\sa}^m}
 \left\vert f_n(X_\nu)\right\vert^q\right)^{1/q}
 &\le& \left(\sum_{X_\nu\in (2n/\tau)O_{1/\sa}^m}
 \left\vert P_{2n}(X_\nu)\right\vert^q\right)^{1/q}
 \nonumber\\
 &\le& \left(\sum_{X_\nu\in (2n/\tau)O_{1/\sa}^m}
 \left\vert f(X_\nu)\right\vert^q\right)^{1/q}
 +C_4\g_n^{1/q}e^{-2an},
\ena
where $\g_n:=\mathrm{card}\left(\Om
\cap(2n/\tau)O_{1/\sa}^m\right)$.
Furthermore, setting
$R:=2(n/\tau)d\left(O_{1/\sa}^m\right)=4n/(\tau \sa)$ by \eqref{E1.0},
similarly to \eqref{E3.36}
we obtain
\beq\label{E3.38}
\g_n
\le \left\vert \BB^m_{\de_1/2}\right\vert_m^{-1}
 \left\vert \BB^m_{R+\de_1/2}\right\vert_m
\le C(\sa,\tau,m,\de_1)n^m.
\eeq
Thus \eqref{E3.33} follows from \eqref{E3.32}, \eqref{E3.37},
and \eqref{E3.38}.
\end{proof}

\section{Proofs of Main Results}\label{S4n}
\setcounter{equation}{0}
\noindent
\emph{Proof of Theorem \ref{T1.1}.}
(a) Without loss of generality, we can assume that $f\in L_q(\R^m)$.
Note first that by Definition \ref{D1.2a}, any cube from
the family of open cubes
$G=\left\{\mathring{Q}_{\de_1/4}^m
\left(X_\nu\right)\right\}_{\nu=1}^\iy$
has the nonempty intersection with no more than $N$
 sets from $G$, not counting the cube itself.
 Then by Lemma \ref{L2.2} (a), there exists a partition
 $\left\{G_j\right\}_{j=1}^{N+1}$ of $G$
 with pairwise disjoint sets in each subfamily
 $G_j=\left\{\mathring{Q}_{\de_1/4}^m
\left(X_{\nu}^{(j)}\right)\right\}_{\nu=1}^\iy,
 \,1\le j\le N+1$.
In addition, there exist points
$Y_\nu^{(j)}\in Q_{\de_1/4}^m\left(X_\nu^{(j)}\right),\,
\nu\in\N$,
such that for $q\in[1,\iy)$ and each $j,\,1\le j\le N+1$,
\bna\label{E4.1}
\|f\|_{L_q(\R^m)}
&\ge& \left(\int_{\bigcup_{\nu=1}^\iy Q_{\de_1/4}^m\left(X_\nu^{(j)}\right)}
\vert f(x)\vert^q\,dx\right)^{1/q}
\nonumber\\
&=&\left(\de_1/2\right)^{m/q}\left(\sum_{\nu=1}^\iy
\left\vert f\left(Y_\nu^{(j)}\right)\right\vert^q\right)^{1/q}
\nonumber\\
&\ge& \left(\de_1/2\right)^{m/q}\left(
\sum_{\nu=1}^\iy\left\vert f\left(X_\nu^{(j)}\right)\right\vert^q\right)^{1/q}
\nonumber\\
&-&\left(\de_1/2\right)^{m/q}
\left\vert\left(\sum_{\nu=1}^\iy\left\vert f\left(X_\nu^{(j)}\right)\right\vert^q\right)^{1/q}
-\left(\sum_{\nu=1}^\iy\left\vert f\left(Y_\nu^{(j)}\right)\right\vert^q\right)^{1/q}
\right\vert.
\ena
Next, it follows from \eqref{E4.1}
and Lemma \ref{L2.8} for $h=\de_1/4$ that
\bna\label{E4.1b}
\left(1+C(m,q)\max\left\{\de_1\sa,(\de_1\sa)^d\right\}\right)^q\|f\|_{L_q(\R^m)}^q
 \nonumber\\
 \ge \left(\de_1/2\right)^{m}
 \sum_{\nu=1}^\iy \left\vert f\left(X_\nu^{(j)}\right)
 \right\vert^q,\qquad 1\le j\le N+1.
\ena
Finally, adding all inequalities \eqref{E4.1b}
for $1\le j\le N+1$, we arrive at
\eqref{E1.3.1} and \eqref{E1.3.2}.
\vspace{.12in}\\
(b) and (c) Since the premises of the statements
do not include the assumption
of $f\in L_q(\R^m),\,q\in[1,\iy]$,
we first prove \eqref{E1.3.4} and \eqref{E1.3.5b}
 for the functions $f_n,\,n\in\N$,
constructed in Section \ref{S3.2n}, and then establish (b) and (c)
by passing to the limit as $n\to\iy$.

We recall that functions
$f_n(\cdot)=f_n(f,m,q,\be,\cdot),\,n\in\N$, are defined by
\eqref{E3.22}, where $\be=\be(\tau,\vep,w^*)$
 is defined by  \eqref{E3.21},
and numbers $\tau\ge 4$ and $\vep>0$ are fixed.
In addition, let us set
$\sa_*:=8e^{1/4}\sqrt{m}\sa$.
Note that
\beq\label{E4.1cc}
\sa<\sa_*<11\sqrt{m}\sa.
\eeq
In this proof we set $\tau=4$, so
$\lim_{\vep\to 0}\be(4,\vep,w^*)=\sa_*/2$.
Then it follows from \eqref{E3.22} that $f_n$ is an entire function
 of spherical type $\sa_n=\sa_*+O(1/n)$ as $n\to\iy$, and, in addition,
 $f_n\in L_q(\R^m),\,n\in\N$, by \eqref{E3.27}.
 Hence
 $f_n\in B_{\sa_n,m}\cap L_q(\R^m),
 \,n\in\N,\,q\in[1,\iy]$.

Let us first discuss  statement (c), i.e., the case of $q=\iy$ and
 $\sup_{\nu\in \N}\left\vert f\left(X_\nu\right)\right\vert<\iy$
 with $\de$, satisfying the condition $11m^{3/2}\de\sa\le 1$.
 Then $m\de\sa_*<1$ by \eqref{E4.1cc}.
 By Definition \ref{D1.1}, for any $x\in\R^m$ there exists
 $X_\nu\in\Omega$ such that
 $\left\|x-X_\nu\right\|_\iy<\de$.
 Therefore, using the mean value theorem and Bernstein-type inequality
 \eqref{E2.18}, we obtain
 for $\nabla:=\left(\partial/\partial x_1,\ldots,
 \partial/\partial x_m\right)$
 \bna\label{E4.2}
 \left\vert f_n\left(x\right)\right\vert
 &\le& \left\vert f_n\left(X_\nu\right)\right\vert
 +\sup_{x\in\R^m}
  \left\|\nabla f_n(x)\right\|_{1}
 \left\|x-X_\nu\right\|_{\iy}\nonumber\\
 &\le& \left\vert f_n\left(X_\nu\right)\right\vert
 +m \de\sa_n\left\| f_n\right\|_{L_\iy(\R^m)}.
 \ena
 Note that $1-m\de\sa_n>0$
 for a large enough
 $n\in\N$ and for a small enough $\vep>0$
 since $1-m\de\sa_*>0$.
 Then for these $n$ and $\vep$,
 the following inequality is a consequence of \eqref{E4.2}:
 \beq\label{E4.3}
 \left\| f_n\right\|_{L_\iy(\R^m)}
 \le \frac{\sup_{\nu\in \N}
 \left\vert f_n\left(X_\nu\right)\right\vert}
 {1-m \de\sa_n}.
 \eeq
 Using relations \eqref{E3.23} and \eqref{E3.33}
 for $q=\iy$ of Properties
 \ref{P3.8} and \ref{P3.10}, respectively,
 we obtain from \eqref{E4.3} that
 for any $x\in\R^m$,
 \ba
 \vert f(x)\vert
 \le \limsup_{n\to\iy}\left\vert f_n(x)\right\vert
 +\lim_{n\to\iy}\left\vert f(x)- f_n(x)\right\vert
 \le \frac{\sup_{\nu\in \N}
 \left\vert f\left(X_\nu\right)\right\vert}
 {1-m \de\sa_*}.
 \ea
 Therefore, $f\in L_\iy(\R^m)$.
 Next, $1-m \sa\de>1-m \sa_*\de>0$ by \eqref{E4.1cc}
  and, replacing $f_n$ with $f$
 and $\sa_n$ with $\sa$ in inequalities \eqref{E4.2} and \eqref{E4.3},
 we arrive at the inequality
 \ba
 \left\| f\right\|_{L_\iy(\R^m)}
 \le \frac{\sup_{\nu\in \N}
 \left\vert f\left(X_\nu\right)\right\vert}
 {1-m \de\sa}.
 \ea
 Thus the proof of statement (c) is completed.

 Next, let us discuss statement (b), i.e., the case of
 $q\in[1,\iy)$ and
 $\left(\sum_{\nu=1}^\iy \left\vert f\left(X_\nu\right)
 \right\vert^q\right)^{1/q}
 <\iy$ with $\de$, satisfying the condition $\sa\de \le C(m,q)$.
 We first apply Lemma \ref{L2.1b} (c) to $\Omega$ and
 find
 a $\de$-packing and a $2\de$-covering  net
 $\Omega^*=\left\{Z_\mu\right\}_{\mu=1}^\iy
 \subseteq\Omega$
  for $\R^m$.
 Note that by Lemma \ref{L2.1b} (a),
 the family of closed cubes
 $G=\left\{Q_{2\de}^m\left(Z_\mu\right)\right\}_{\mu=1}^\iy$
 covers $\R^m$, i.e.,
 $\bigcup_{\mu=1}^\iy Q_{2\de}^m\left(Z_\mu\right)=\R^m$.

 In addition, by Lemma \ref{L2.2} (c),
 there exists a partition $\left\{G_j\right\}_{j=1}^{N+1}$
 of $G$ with pairwise disjoint cubes in each
 $G_j=\left\{Q_{2\de}^m\left(Z_{\mu}^{(j)}\right)\right\}_{\mu=1}^\iy,\,
 1\le j\le N+1$, where
  \beq\label{E4.4}
   N+1\le\left\lfloor 2^m\left(\left({8\de}/{\de}
   \right)^m-1\right)\right\rfloor +1
   \le 2^{4m}.
  \eeq
  Then using again functions $f_n\in B_{\sa_n,m}\cap L_q(\R^m),\,n\in\N$, we have
  \bna\label{E4.5}
  \left\| f_n\right\|_{L_q(\R^m)}
  \le \left(\sum_{j=1}^{N+1}\sum_{\mu=1}^\iy
  \int_{Q_{2\de}^m\left(Z_{\mu}^{(j)}\right)}
   \left\vert f_n(x)\right\vert^q dx\right)^{1/q}
  =(4\de)^{m/q}\left(\sum_{j=1}^{N+1}\sum_{\mu=1}^\iy
  \left\vert f_n\left(Y_{\mu}^{(j)}\right)
  \right\vert^q \right)^{1/q},
  \ena
where $Y_{\mu}^{(j)}\in Q_{2\de}^m\left(Z_{\mu}^{(j)}\right),\,
\mu\in\N,\,1\le j\le N+1$.
Furthermore, applying Lemma \ref{L2.8} for $h=2\de$
and each $j,\,1\le j\le N+1$,
 we obtain
\ba
\left(\sum_{\mu=1}^\iy
  \left\vert f_n\left(Y_{\mu}^{(j)}\right)
  \right\vert^q \right)^{1/q}
  &\le& \left(\sum_{\mu=1}^\iy\left\vert
  f_n\left(Z_{\mu}^{(j)}\right)
  \right\vert^q \right)^{1/q}\\
  &+& C(m,q)\de^{-m/q}
  \max\left\{\de\sa_n,(\de\sa_n)^d\right\}
  \|f_n\|_{L_q(\R^m)}.
\ea
Hence
\bna\label{E4.6}
\sum_{j=1}^{N+1}\sum_{\mu=1}^\iy
  \left\vert f_n\left(Y_{\mu}^{(j)}\right)
  \right\vert^q
  &\le& 2^{q-1}\sum_{\mu=1}^\iy\left\vert
  f_n\left(Z_{\mu}\right)
  \right\vert^q \nonumber\\
  &+& C(m,q)(N+1)\de^{-m}
  \max\left\{(\de\sa_n)^q,(\de\sa_n)^{dq}\right\}
  \|f_n\|_{L_q(\R^m)}^q.
  \ena
  It follows from \eqref{E4.1cc}, \eqref{E4.4}, and the condition $\sa\de\le C(m,q)$
  that there exists a constant $C(m,q)$ such that
  \beq\label{E4.6a}
  1-C(m,q)(N+1)
 \max\left\{(\de\sa_*)^q,(\de\sa_*)^{dq}\right\}>0.
  \eeq
Combining \eqref{E4.5} with \eqref{E4.6} and
\eqref{E4.4}, we have
\bna\label{E4.7}
 \left\| f_n\right\|_{L_q(\R^m)}
 &\le& \frac{(4\de)^{m/q}2^{1-1/q}
 \left(\sum_{\mu=1}^\iy\left\vert
  f_n\left(Z_{\mu}\right)
  \right\vert^q \right)^{1/q}}
  {\left(1-C(m,q)(N+1)
  \max\left\{(\de\sa_n)^q,(\de\sa_n)^{dq}\right\}\right)^{1/q}}\nonumber\\
  &\le& \frac{(4\de)^{m/q}2^{1-1/q}
 \left(\sum_{\nu=1}^\iy\left\vert
  f_n\left(X_{\nu}\right)
  \right\vert^q \right)^{1/q}}
  {\left(1-C(m,q)
 \max\left\{(\de\sa_n)^q,(\de\sa_n)^{dq}\right\}\right)^{1/q}},
\ena
where the denominators in
\eqref{E4.7} are positive by \eqref{E4.6a} for
a large enough
 $n\in\N$ and a small enough $\vep>0$.

 Using now relations \eqref{E3.23} and \eqref{E3.33}
 of Properties
 \ref{P3.8} and \ref{P3.10}, respectively, we obtain
 from \eqref{E4.7} that
 for any compact $K\subset\R^m$,
 \bna\label{E4.8}
 \left\| f\right\|_{L_q(K)}
 &=& \limsup_{n\to\iy}\left\| f_n\right\|_{L_q(K)}
 +\lim_{n\to\iy}\left\| f-f_n\right\|_{L_q(K)}
 \nonumber\\
 &\le& \frac{(4\de)^{m/q}2^{1-1/q}
 \left(\sum_{\nu=1}^\iy\left\vert
  f\left(X_{\nu}\right)
  \right\vert^q \right)^{1/q}}
  {\left(1-C(m,q)
 \max\left\{(\de\sa_*)^q,(\de\sa_*)^{dq}\right\}\right)^{1/q}}.
 \ena
 Since the right-hand side of \eqref{E4.8}
 is independent of $K$, we  conclude that $f\in L_q(\R^m)$.
 Next, replacing $f_n$ with $f$
 and $\sa_n$ with $\sa$ in inequalities
 \eqref{E4.5}, \eqref{E4.6}, and \eqref{E4.7},
 we arrive at \eqref{E1.3.4} with estimate \eqref{E1.3.5}
 for $C_2$.
 Note that $C_2>0$ by estimates \eqref{E4.6a}, \eqref{E4.4}, and \eqref{E4.1cc}.
 Thus the proof of statement (b) is completed.
 \hfill $\Box$
 \vspace{.12in}\\
 \emph{Proof of Corollary \ref{C1.2a}.}
 The corollary follows from Corollary \ref{C1.2} and Lemma \ref{L2.1b} (c).
 \hfill $\Box$
 \vspace{.12in}\\
 \emph{Proof of Theorem \ref{T1.1a}.}
 (a) We first note that the function
 $f_0\in B_{\sa,m}\cap L_q(\R^m), q\in[1,\iy)$,
satisfies the relation
$\lim_{\vert x\vert\to\iy} f_0(x)=0$
(see, e.g., \cite[Theorem 3.2.5]{N1969}).
Therefore, $f_0\in L_\iy(\R^m),\,
f_0$ is not identically zero,
and there exists $x_0\in\R^m$ such that
$\left\|f_0\right\|_{L_\iy(\R^m)}
=\left\vert f_0\left(x_0\right)\right\vert$.

Next, for $\de_1^*=\de_1^*:=1/(m\sa)$, the following
inequality is valid:
\beq\label{E4.1c}
\inf_{x\in Q_{\de_1^*/2}^m(x_0)}
\left\vert f_0\left(x\right)\right\vert
\ge (1/2)\left\vert f_0\left(x_0\right)\right\vert,
\eeq
since by the mean value theorem and a
Bernstein-type inequality
\eqref{E2.18},
\ba
\left\vert f_0\left(x_0\right)\right\vert
\le \left\vert f_0\left(x\right)\right\vert
+m\sa\left\vert f_0\left(x_0\right)\right\vert
\left\|x-x_0\right\|_{\iy}
\ea
 (cf. \eqref{E4.2}).
Finally, setting $f_\nu(x)
:=f_0\left(x+x_0-X_\nu\right)$,
 we obtain
by \eqref{E1.3.1} and \eqref{E4.1c},
\ba
\left\|f_0\right\|_{L_q(\R^m)}
=\left\|f_\nu\right\|_{L_q(\R^m)}
&\ge& C_1 \left(\sum_{\mu=1}^\iy
\left\vert f_\nu\left(X_\mu\right)
 \right\vert^q\right)^{1/q}
 \ge C_1 \left(\sum_{X_\mu\in
 \mathring{Q}_{\de_1^*/2}^m(X_\nu)}
\left\vert f_\nu\left(X_\mu\right)
 \right\vert^q\right)^{1/q}\\
 &\ge& (1/2)C_1
 \left\|f_0\right\|_{L_\iy(\R^m)}
 \left(\mathrm{card}\left(\Omega\cap
 \mathring{Q}_{\de_1^*/2}^m
 \left(X_\nu\right)\right)\right)^{1/q}
 ,\qquad \nu\in\N.
\ea
Thus statement (a) is established
with $\de_1\in (0,\de_1^*]$ and $N$, satisfying inequalities \eqref{E1.3.5c}.
\vspace{.12in}\\
(b) Let $\Omega^*:=\left\{X_\nu^*\right\}_{\nu=1}^\iy$
 be a $(\de_1,N)$-packing net
 for $\R^m,\,N\in\Z^1_+$.
 Note that by Definition \ref{D1.2a}, any cube from
the family of open cubes
$G=\left\{\mathring{Q}_{\de_1/2}^m
\left(X_\nu^*\right)\right\}_{\nu=1}^\iy$
has the nonempty intersection with no more than $N$
 sets from $G$, not counting the cube itself.
 Then by Lemma \ref{L2.2} (a), there exists a partition
 $\left\{G_j\right\}_{j=1}^{N+1}$ of $G$
 with pairwise disjoint sets in each subfamily
 $G_j=\left\{\mathring{Q}_{\de_1/2}^m
\left(X_{\nu}^{*(j)}\right)\right\}_{\nu=1}^\iy,
 \,1\le j\le N+1$.
 Next setting
 \ba
 \Om_l^{(j)}\left(\de_1\right)
 :=\left\{X_\nu^{*(j)}
 : 2^l\de_1\le \left\vert X_\nu^{*(j)}
 \right\vert < 2^{l+1}\de_1\right\},
 \qquad l\in\Z^1_+,\quad 1\le j\le N+1,
 \ea
 similarly to \eqref{E3.36}
we obtain
\beq\label{E4.1.2c}
\mathrm{card}\left(\left(\Om_l^{(j)}\right)
\left(\de_1\right)\right)
\le \left\vert \BB^m_{\de_1/2}\right\vert_m^{-1}
\left\vert \BB^m_{2^{l+2}\de_1}\right\vert_m
=2^{m(l+2.5)}.
\eeq
 In addition, we introduce the following entire function
 \beq\label{E4.1.2d}
 f_0(x):=\left(\frac{\sin(\sa\vert x\vert/\g)}
{\vert x\vert/\g}\right)^\g
 \eeq
 of spherical type $\sa$ that satisfies the relations
 $f_0\in B_{\sa,m}$ and
 $\left\|f_0\right\|_{L_q(\R^m)}=C_6(m,q)\sa^{\g-m/q}$.
 Here,  $\g$ is defined in \eqref{E1.3.5e}.
 Then for $n\in\Z^1_+$ we obtain from \eqref{E4.1.2d} and \eqref{E4.1.2c}
 \bna\label{E4.1.3c}
 &&\sum_{\left\vert X_\nu^{*}\right\vert\ge 2^n\de_1}
 \left\vert f_0\left(X_\nu^{*}\right)\right\vert^q
 =\sum_{j=1}^{N+1}
 \sum_{\left\vert X_\nu^{*(j)}\right\vert\ge 2^n\de_1}
 \left\vert f_0\left(X_\nu^{*(j)}\right)\right\vert^q
 \nonumber\\
 &&= \sum_{j=1}^{N+1} \sum_{l=n}^\iy
 \sum_{X_\nu^{*(j)}\in \Om_l^{(j)}\left(\de_1\right)}
 \left\vert f_0\left(X_\nu^{*(j)}\right)\right\vert^q
 \nonumber\\
 &&\le C_7(m,q)\de_1^{-\g q}\sum_{j=1}^{N+1}\sum_{l=n}^\iy
 \mathrm{card}\left(\Om_l^{(j)}\left(\de_1\right)\right)
 2^{-\g ql}
 \nonumber\\
 &&\le C_{8}(m,q)\de_1^{-\g q}(N+1)2^{-(\g q-m)n}.
 \ena
 Furthermore, recall that $\de^*$ is defined in
  \eqref{E1.3.5e},
 and let us define the constant $C$ in \eqref{E1.3.5e} by
 $C(m,q):=2\left(C_8 C_6^{-q}\right)^{1/(\g q-m)}$.
 To prove statement (b), it suffices to show that
 if $\de>\de^*$,
 then $\Omega\cap \mathring{Q}_\de^m(y)\ne\emptyset$
 for any $y\in\R^m$.

 Indeed, assume that $\de>\de^*$ and
 there exists $y\in\R^m$ such that
 $\Omega\cap \mathring{Q}_\de^m(y)=\emptyset$.
 Let us set $X_\nu^*:=X_\nu-y,\,\nu\in\N$. Then
 $\Omega^*:=\left\{X_\nu^*\right\}_{\nu=1}^\iy$
 is a $(\de_1,N)$-packing net
 for $\R^m,\,N\in\Z^1_+$.
 Next, note that $\de^*\ge \de_1$ by \eqref{E1.3.5e},
 so there exists $n\in\Z^1_+$ such that
 $\de\in\left[2^n\de_1,2^{n+1}\de_1\right)$.
 Setting now $f_y(x):=f_0(x-y)$,
 we see from
 \eqref{E1.3.4} and \eqref{E4.1.3c} that
 \ba\label{E4.1.4c}
 &&C_{8}\de_1^{-\g q}(N+1)(2\de_1/\de)^{\g q-m}
 \ge C_{8}\de_1^{-\g q}(N+1)2^{-(\g q-m)n}
 \ge \sum_{\left\vert X_\nu^{*}\right\vert\ge 2^n\de_1}
 \left\vert f_0\left(X_\nu^{*}\right)\right\vert^q\\
 &&\ge \sum_{\left\vert X_\nu^{*}\right\vert\ge \de}
 \left\vert f_0\left(X_\nu^{*}\right)\right\vert^q
 =\sum_{\nu=1}^\iy
 \left\vert f_y\left(X_\nu\right)\right\vert^q
 \ge C_2^q\left\|f_y\right\|_{L_q(\R^m)}^q
 =\left(C_2C_6\right)^{q}\sa^{\g q-m}.
 \ea
 Hence $\de\le \de^*$.
 This contradiction shows  that for
 any $\de\in(\de^*,\iy),\,
 \Omega$ is a $\de$-covering net
 for $\R^m$, by
 Definition \ref{D1.1}.
 \vspace{.12in}\\
 (c)  We first note that for any $\de>0$, the entire function
$
f_0(x):=\sin(\sa\vert x\vert)
/\vert x\vert
$
of spherical type $\sa$ satisfies the following
 relations:
 \beq\label{E4.1.1c}
 f_0\in B_{\sa,m},\qquad
 \left\|f_0\right\|_{L_\iy(\R^m)}=\sa,\qquad
 \left\|f_0\right\|_
 {L_\iy\left(\R^m\setminus \mathring{Q}^m_\de
 \right)} \le 1/\de.
 \eeq
 Next, if $\de>\left(C_3\sa\right)^{-1}$,
 where $C_3$ is the constant from \eqref{E1.3.5b},
 then $\Omega\cap \mathring{Q}_\de^m(y)\ne\emptyset$
 for any $y\in\R^m$. Indeed, assume that
 there exists $y\in\R^m$ such that
 $\Omega\cap \mathring{Q}_\de^m(y)=\emptyset$.
 Then setting $f_y(x):=f_0(x-y)$, we see from
 \eqref{E1.3.5b} and \eqref{E4.1.1c} that
 \ba
 1/\de
 \ge  \left\|f_y\right\|_
 {L_\iy\left(\R^m\setminus \mathring{Q}_
 {\de}^m(y)\right)}
 \ge \sup_{\nu\in\N}\left\vert
 f_y\left(X_\nu\right)\right\vert
 \ge C_3\left\|f_y\right\|_
 {L_\iy\left(\R^m\right)}=C_3\sa.
 \ea
 This contradiction shows  that for
 any $\de\in \left(\left(C_3\sa\right)^{-1},\iy\right),\,
 \Omega$ is a $\de$-covering net
 for $\R^m$, by
 Definition \ref{D1.1}.
 \vspace{.12in}\\
 \emph{Proof of Theorem \ref{T1.3}.}
 Using first Lemma \ref{L3.7} for
 $A=D_n\|f\|_{L_\iy(Q^m_b)},\,n=n(N)\in \ES$,
 we see that there exists a polynomial
 $P_n\in\QQ_{n,m}\subseteq \PP_{mn,m}$
 such that
 \beq\label{E4.9}
 \left\|f-P_n\right\|_{L_\iy(Q^m_b)}
 \le C D_n\|f\|_{L_\iy(Q^m_b)}e^{n\psi(\tau)},
 \eeq
 where $C=C(m)$.
 Next, according to Theorem \ref{T1.0},
 for any $\g>0$ there exist
  a constant
$C=C(b,\tau,m,\g)$
and a finite set  $\left\{X_1,\ldots, X_\La\right\}
\subset Q^m_b$
with $\La\le Cn^m$ such that
\bna\label{E4.10}
\|f\|_{L_\iy(Q^m_b)}
&\le& \left\|P_n\right\|_{L_\iy(Q^m_b)}
+\left\|f-P_n\right\|_{L_\iy(Q^m_b)}\nonumber\\
&\le& \sqrt{1+\g}\max_{1\le j\le \La}
\left\vert P_n\left(X_j\right)\right\vert
+\left\|f-P_n\right\|_{L_\iy(Q^m_b)}\nonumber\\
&\le& \sqrt{1+\g}\max_{1\le j\le \La}
\left\vert f\left(X_j\right)\right\vert
+\left(1+\sqrt{1+\g}\right)
\left\|f-P_n\right\|_{L_\iy(Q^m_b)}.
\ena
Finally, choosing by \eqref{E1.3.7} an integer
$N_0=N_0(b,\tau,m,\g)\in\N$ such that
\ba
\left(1-\left(1+\sqrt{1+\g}\right)
C D_{n(N)}e^{n(N)\psi(\tau)}\right)^{-1}<\sqrt{1+\g}
\ea
for $N\ge N_0$,
we arrive at \eqref{E1.3.8} for
$n\ge n_0:=n(N_0)$ from
\eqref{E4.9} and \eqref{E4.10}.
\hfill $\Box$
\vspace{.12in}\\
 \emph{Proof of Theorem \ref{T1.5}.}
 Setting first $U(y):=\sum_{k\in\Z^m_+
 \cap Q^m_N}c_ky^k\in\QQ_{N,m}$, where
 $c_k,\,k\in\Z^m_+\cap Q^m_N$, are the coefficients
 of the exponential polynomial $E_N$ defined in
 \eqref{E1.3.10},
 and applying Lemma \ref{L2.6} (b) to $U$ for $n=N$,
 we obtain
 \ba
 \sum_{k\in\Z^m_+\cap Q^m_N}\left\vert c_k\right\vert
 &\le&\left(\frac{e^{b/4}+e^{-b/4}}{e^{b/4}-e^{-b/4}}\right)^{mN}
 \|U\|_{L_\iy\left(\left[e^{-b},e^b\right]^m\right)}\nonumber\\
 &=&\left(\frac{e^{b/4}+e^{-b/4}}{e^{b/4}-e^{-b/4}}\right)^{mN}
 \left\|E_N\right\|_{L_\iy(Q^m_b)}.
 \ea
 Hence
 \beq\label{E4.11}
\left\vert E_N(w)\right\vert
\le \left(\frac{e^{b/4}+e^{-b/4}}{e^{b/4}-e^{-b/4}}\right)^{mN}
 \left\|E_N\right\|_{L_\iy(Q^m_b)}
 \exp\left(N\sum_{j=1}^m\left\vert w_j\right\vert\right),
 \qquad w\in\CC^m.
 \eeq
Next, given $\tau>\g_0=1.5088\ldots$, let us set
$n(N):=\lceil Nmb\tau\rceil,\,N\in\N$.
We also recall that $\psi(\tau)$ is defined
by \eqref{E1.3.5a} and $\psi(\tau)<0$ for $\tau>\g_0$.
Then it follows from \eqref{E4.11} that
\beq\label{E4.12}
\left\vert E_N(w)\right\vert
\le D_n
 \left\|E_N\right\|_{L_\iy(Q^m_b)}
 \exp\left(\frac{n}{mb\tau}
 \sum_{j=1}^m\left\vert w_j\right\vert\right),
 \qquad w\in\CC^m.
 \eeq
 with
 \ba
 D_n:=
 \left(\frac{e^{b/4}+e^{-b/4}}{e^{b/4}-e^{-b/4}}\right)^{mN}.
 \ea
 Then
 \beq\label{E4.13}
D_ne^{n\psi(\tau)}
\le \left(\frac{e^{b/4}+e^{-b/4}}{e^{b/4}-e^{-b/4}}\right)^{mN}
e^{Nmb\tau\psi(\tau)}
:=e^{mNG(\tau,b)},
\eeq
where
\ba
G(\tau,b)
:=b\left(\sqrt{1+\tau^2}
-\tau\log\left(\tau+\sqrt{1+\tau^2}\right)\right)
+\log \frac{e^{b/4}+e^{-b/4}}{e^{b/4}-e^{-b/4}}.
\ea

Since $G(\cdot,b)$ is a strictly decreasing function
on $(0,\iy)$ for a fixed $b$ and
$\lim_{\tau\to\iy}G(\tau,b)=-\iy$,
there exists the unique solution
$\tau_0=\tau_0(b)\in(\g_0,\iy)$ to
the equation $G(\tau,b)=0$.
Then by \eqref{E4.13} for $\tau>\tau_0$,
\beq\label{E4.14}
\lim_{N\to\iy}D_{n(N)}e^{n(N)\psi(\tau)}
=\lim_{N\to\iy}e^{mNG(\tau,b)}=0.
\eeq
Relations \eqref{E4.12} and \eqref{E4.14}
show that conditions \eqref{E1.3.6} and
\eqref{E1.3.7} of Theorem \ref{T1.3}
hold for $f=E_N$ and $\tau>\tau_0$.
Thus Theorem \ref{T1.5} follows from
Theorem \ref{T1.3}.
\hfill $\Box$
\vspace{.12in}\\
\textbf{Acknowledgements.}
We are grateful to Andr\'{a}s Kro\'{o}
for the provision of references
\cite{K2019} and \cite{DP2023}.

\end{document}